\numberwithin{equation}{section}
\theoremstyle{plain}
\newtheorem{thm}{Theorem}[section]
\newtheorem{lem}[thm]{Lemma}
\newtheorem{defn}[thm]{Definition}
\newtheorem{rem}[thm]{Remark}
\theoremstyle{definition}
\newtheorem{exam}[thm]{Example}
\newcommand{\brac}[1]{\left(#1\right)}
\newcommand{\abs}[1]{\left\vert#1\right\vert}
\newcommand{\vect}[1]{\boldsymbol{#1}}
\begin{document}
\baselineskip=1.6pc

\vspace{.5in}

\begin{center}

{\large\bf Machine learning moment closure models for the radiative transfer equation III: enforcing  hyperbolicity and physical characteristic speeds}

\end{center}

\vspace{.1in}

\centerline{
Juntao Huang \footnote{Department of Mathematics,
Michigan State University, East Lansing, MI 48824, USA.
E-mail: huangj75@msu.edu} \qquad
Yingda Cheng  \footnote{Department of Mathematics, Department of Computational Mathematics, Science and Engineering, Michigan State University, East Lansing, MI 48824, USA. E-mail: ycheng@msu.edu. Research is supported by NSF grants    DMS-2011838 and AST-2008004.} \qquad
Andrew J. Christlieb \footnote{Department of Computational Mathematics, Science and Engineering, Michigan State University, East Lansing, Michigan 48824, USA. E-mail: christli@msu.edu. Research is supported by: AFOSR grants FA9550-19-1-0281 and FA9550-17-1-0394; NSF grants DMS-1912183 and AST-2008004; and DoE grant DE-SC0017955.} \qquad
Luke F. Roberts \footnote{National Superconducting Cyclotron Laboratory and Department of Physics and Astronomy, Michigan State University, East Lansing, MI 48824, USA and CCS-2, Los Alamos National Laboratory, Los Alamos, NM 87545, USA. E-mail: lfroberts@lanl.gov. Research is supported by: NSF grant AST-2008004; and DoE grant DE-SC0017955.}
}

\vspace{.4in}

\centerline{\bf Abstract}
\vspace{.1in}

This is the third paper in a series in which we develop machine learning (ML) moment closure models for the radiative transfer equation (RTE). 
In our previous work \cite{huang2021gradient}, we proposed an approach to learn the gradient of the unclosed high order moment, which performs much better than learning the moment itself and the conventional $P_N$ closure.
However, while the ML moment closure has better accuracy, it is not able to guarantee hyperbolicity and has issues with long time stability. In our second paper \cite{huang2021hyperbolic}, we identified a symmetrizer which leads to conditions that enforce that the gradient based ML closure is symmetrizable hyperbolic and stable over long time. The limitation of this approach is that in practice the highest moment can only be related to four, or fewer, lower moments.  

In this paper, we propose a new method to enforce the hyperbolicity of the ML closure model. Motivated by the observation that the coefficient matrix of the closure system is a lower Hessenberg matrix, we relate its eigenvalues to the roots of an associated polynomial. 
We design two new neural network architectures based on this relation. 
The ML closure model resulting from the  first neural network is weakly hyperbolic and guarantees the physical characteristic speeds, i.e., the eigenvalues are bounded by the speed of light. The second model is strictly hyperbolic and does not guarantee the boundedness of the eigenvalues.
Several benchmark tests including the Gaussian source problem and the two-material problem show the good accuracy, stability and generalizability of our hyperbolic ML closure model.

\bigskip

{\bf Key Words: radiative transfer equation; moment closure;   machine learning; neural network; hyperbolicity}

\pagenumbering{arabic}

\section{Introduction}\label{sec:intro}
\setcounter{equation}{0}
\setcounter{figure}{0}
\setcounter{table}{0}
In this paper, we introduce an extension to our previous works on ML closures for  radiative transfer  modeling \cite{huang2021gradient,huang2021hyperbolic}. The new approach enforces the hyperbolicity (and physical characteristic speeds) by ensuring mathematical consistency between the closure and the macroscopic model. Further, the numerical results  demonstrate the plausibility of capturing kinetic effects in a moment system with a handful of moments and an appropriate closure model.  

The study of radiative transfer is of vital importance in many fields of science and engineering including astrophysics \cite{pomraning2005equations}, heat transfer \cite{koch2004evaluation}, and optical imaging \cite{klose2002optical}. The kinetic description of  radiative transfer is a integro-differential equation in six dimensions in spatial and angular spaces plus time.  While there exist many numerical methods to solve this equation, from Monte Carlo methods to deterministic mesh based schemes, the fundamental fact remains that radiative transfer equation (RTE) is computationally demanding for many problems.  

An alternative approach is to directly model the observables of the kinetic equations: density, momentum, energy etc. by taking moments of the kinetic equation.  However, the resulting system of equations is not closed,  since the equation for the $p^{th}$ moment depends on knowledge of the $(p+1)^{th}$ moment.  This is known as the moment closure problem.  To obtain a closed system of equations, typically a relationship has to be introduced to eliminate  the dependency of the equations on the $(p+1)^{th}$ moment. This may be as simple as setting the  $(p+1)^{th}$ moment  to zero, or involve some other relations relating the $(p+1)^{th}$ moment to lower order moments.   Many moment closure models have been developed, including the $P_N$ model \cite{chandrasekhar1944radiative}; the variable Eddington factor models \cite{levermore1984relating,murchikova2017analytic}; the entropy-based $M_N$ models \cite{hauck2011high,alldredge2012high,alldredge2014adaptive};  the positive $P_N$ models \cite{hauck2010positive}; the filtered $P_N$ models \cite{mcclarren2010robust,laboure2016implicit}; the $B_2$ models \cite{alldredge2016approximating}; and the $MP_N$ model \cite{fan2020nonlinear,fan2020nonlinear2,li2021direct}. 

In moment closure problems, hyperbolicity is a critical issue, which is essential for a system of first-order partial differential equations (PDEs) to be well-posed \cite{serre1999systems}. The pioneering work on the moment closure for the Boltzmann equation, in the context of gas kinetic theory, was introduced by Grad in \cite{grad1949kinetic} and is the most basic one among the moment models.  Recent analysis for  Grad's 13-moment model showed that the equilibrium of the model is on the boundary of the region of hyperbolicity in 3D  \cite{cai2014hyperbolicity}. This instability issue has led to a range of efforts to develop closures that lead to globally hyperbolic moment systems \cite{cai2013globally,cai2014globally,fan2020nonlinear,fan2020nonlinear2,li2021direct}.

The traditional trade off in introducing a closure relation and solving a moment model instead of a kinetic equation is generic accuracy verses practical computability. However, thanks to the rapid development of machine learning (ML)  and data-driven modeling \cite{brunton2016discovering,raissi2019physics,han2018solving}, a new approach to solve the moment closure problem has emerged based on ML \cite{han2019uniformly,scoggins2021machine,huang2020learning,bois2020neural,ma2020machine,wang2020deep,maulik2020neural,huang2021gradient,huang2021hyperbolic,hauck2021entropy,hauck2021boltzmann}. This approach offers a path for multi-scale problems that is relatively unique, promising to capture kinetic effects in a  moment model with only a handful of moments.  For more detailed literature review, we refer readers to \cite{huang2021gradient}. We   remark that most of the works mentioned above are not able to guarantee hyperbolicity or long time stability, except the works in \cite{huang2020learning,huang2021hyperbolic,hauck2021entropy,hauck2021boltzmann}. 
In \cite{huang2020learning}, based on the conservation-dissipation formalism \cite{zhu2015conservation} of irreversible thermodynamics, the authors proposed a stable ML closure model with hyperbolicity and Galilean invariance for the Boltzmann BGK equation. Nevertheless, the model is limited to only one extra non-equilibrium variable and it is still not clear how to generalize to an arbitrary number of moments. In \cite{hauck2021entropy,hauck2021boltzmann}, the authors constructed ML surrogate models for the maximum entropy closure \cite{levermore1996moment} of the moment system of the RTE and the Boltzmann equation. By approximating the entropy using convex splines and input convex neural networks \cite{amos2017input}, the ML model preserves the structural properties of the original system and reduces the computational cost of the associated ill-conditioned constrained optimization problem significantly, which needed to be solved at each time step in the original formulation of the maximum entropy closure.

This paper is a continuation of our previous work in \cite{huang2021gradient}, where we proposed to directly learn a closure that relates the gradient of the highest order moment to the gradients of the lower order moments. This gradient based closure is consistent with the exact closure for the free streaming limit and also provides a natural output normalization. A variety of numerical tests show that the ML closure model in \cite{huang2021gradient} has better accuracy than an ML closure based on learning a relation between the moments, as opposed to a relation between the gradients, and the conventional $P_N$ closure. Further, the method was able to accurately model both the optically thin and optically thick regime in a single domain with only six moments and was in good agreement with moments computed from the kinetic solution.  However, it is not able to guarantee hyperbolicity and long time simulations are not always satisfactory.

In our follow-up work \cite{huang2021hyperbolic}, we proposed a method to enforce the global hyperbolicity of the ML closure model. The main idea is to seek a symmetrizer (a symmetric positive definite matrix) for the closure system, and derive constraints such that the system is globally symmetrizable hyperbolic. It was also shown that the hyperbolic ML closure system inherits the dissipativeness of the RTE and preserves the correct diffusion limit as the Knunsden number goes to zero. In the numerical tests, the method preformed as well as our original gradient based ML closure for short time simulations and also has the additional benefit of long time stability. A limitation of our  approach in  \cite{huang2021hyperbolic} is that in practice it is limited to relating the gradient of the highest moment to the gradient of the next 4 lower  moments. However,  our analysis in  \cite{huang2021gradient} indicated that in the free streaming limit, the gradient of the highest moment should be related to  a range of gradients which include the lowest moments. 

In this paper, to overcome this limitation, we take a different approach to enforce the hyperbolicity of our gradient based ML closure model.  The approach is to design a structure preserving neural network that ensures that the desired hyperbolicity is preserved in our ML gradient based closure.    The main idea is motivated by the observation that the coefficient matrix of the gradient based closure system \cite{huang2021gradient} is an unreduced lower Hessenberg matrix, see Definition \ref{defn:hessenberg}. Due to this particular mathematical structure, we relate its eigenvalues to the roots of some polynomials associated with the coefficient matrix. Therefore, the hyperbolicity of the closure model is equivalent to the condition that the associated polynomial only has simple and real roots, see Theorem \ref{thm:real-diagonalizable} and Theorem \ref{thm:ml-real-diagonalizable}. Then, we derive the relation between the eigenvalues and the weights in the gradient based closure using the Vieta's formula and a linear transformation between monomial basis functions and Legendre polynomials.
Based on this relation, we design two new neural network architectures both starting with a fully connected neural network which takes the input as the lower order moments.
The first neural network architechture is then followed by a component-wise hyperbolic tangent function to enforce the boundedness of the eigenvalues, while the second one has some postprocessing layers to enforce that the eigenvalues are distinct. 
Lastly, two sublayers representing the Vieta's formula and a linear transformation are applied to produce the weights in the gradient based closure as the final output, see Figure \ref{fig:schematic-nn} and Figure \ref{fig:schematic-nn-distinct} in Section \ref{sec:nn}. 
The resulting ML closure model from the  first neural network is weakly hyperbolic and guarantees the physical characteristic speeds, i.e. the eigenvalues lie in the range of the interval $[-1, 1]$, see Theorem \ref{thm:nn-bound}, while the symmetrizer approach in \cite{huang2021hyperbolic} usually violates the physical characteristic speeds. 
The second model is strictly hyperbolic and does not guarantee the boundedness of the eigenvalues, see Theorem \ref{thm:nn-distinct}. Nevertheless, in practice, we find the characteristic speeds stay close to the physical bound.
Maintaining physical characteristic speeds saves substantial computational efforts by allowing for a larger time step size, as compared to \cite{huang2021hyperbolic} when solving the closure system. 
We numerically tested that the hyperbolic ML closure model has good accuracy in a variety of numerical examples and, just as with our previous work, can capture accurate solutions to problems which have regions in both the optically thin and optically thick regime with only 6 moments.  Further, we numerically demonstrate that as we increase the number of moments in the new approach, the ML closure converges rapidly to the solution of the kinetic equation.

Nevertheless, there exists some numerical instability for the current model when a small number of moments are used.
For the first neural network, we observe numerically that the eigenvalues get too close, which behaves as if the system is weakly hyperbolic instead of strongly hyperbolic. For the second neural network, we check the linear stability of the system numerically and find that the loss of linear stability probably results in the blow up of the numerical solutions. How to stabilize the closure system, while maintaining the accuracy, is a topic to be investigated in the future.

The remainder of this paper is organized as follows. In Section, \ref{sec:hessenberg}, we present some preliminary results about Hessenberg matrixes. In Section \ref{sec:moment-closure}, we introduce the hyperbolic ML moment closure model. In Section \ref{sec:nn}, we present the details in the architectures and the training of the neural networks. The effectiveness of our ML closure model is demonstrated through extensive numerical results in Section \ref{sec:numerical-test}. Some concluding remarks are given in Section \ref{sec:conclusion}.

\section{Preliminary results about Hessenberg matrix}\label{sec:hessenberg}

In this section, we review important properties of the Hessenberg matrix.  These properties facilitate directly relating the eigenvalues of a Hessenberg matrix to the roots of some associated polynomial and derive some equivalent conditions for a Hessenberg matrix to be real diagonalizable.  As the matrix being real diagonalizable is equivalent to enforcing that the first-order system is hyperbolic, this is a critical aspect  in the design of our structure-preserving neural network in Sections 3 and 4.

We start with the definitions of the (unreduced) lower Hessenberg matrix and the associated polynomial sequence \cite{elouafi2009recursion}:
\begin{defn}[lower Hessenberg matrix]\label{defn:hessenberg}
	The matrix $H = (h_{ij})_{n\times n}$ is called lower Hessenberg matrix if $h_{ij}=0$ for $j>i+1$. It is called unreduced lower Hessenberg matrix if further $h_{i,i+1}\ne0$ for $i=1,2,\cdots,n-1$.
\end{defn}
\begin{defn}[associated polynomial sequence \cite{elouafi2009recursion}]\label{defn:associated-polynomial}
	Let $H = (h_{ij})_{n\times n}$ be an unreduced lower Hessenberg matrix. The associated polynomial sequence $\{q_i\}_{0\le i\le n}$ with $H$ is defined as: $q_0=1$, and
	\begin{equation}\label{eq:recurrence-polynomial}
		q_i(x) = \frac{1}{h_{i,i+1}} \brac{x q_{i-1}(x) - \sum_{j=1}^i h_{ij} q_{j-1}(x)}, \quad 1\le i\le n,
	\end{equation}
with $h_{n,n+1}:=1$.	
\end{defn}
Notice that the recurrence relation in \eqref{eq:recurrence-polynomial} can be written as a matrix-vector form:
\begin{equation}\label{eq:recurrence-polynomial-matx-vect}
	H \vect{q}_{n-1}(x) = x \vect{q}_{n-1}(x) - q_n(x) \vect{e}_n,
\end{equation}
where $\vect{q}_{n-1}(x) = (q_0(x), q_1(x), \cdots, q_{n-1}(x))^T$ and $\vect{e}_n=(0, 0, \cdots, 0, 1)^T\in\mathbb{R}^n$. From this relation, one can immediately relate the roots of $q_n$ to the eigenvalues of $H$ \cite{elouafi2009recursion}:
\begin{thm}[\cite{elouafi2009recursion}]\label{thm:root-is-eigenvalue}
	Let $H = (h_{ij})_{n\times n}$ be an unreduced lower Hessenberg matrix and $\{q_i\}_{0\le i\le n}$ is the associated polynomial sequence with $H$. The following conclusion holds true:
	\begin{enumerate}
		\item 
		If $\lambda$ is a root of $q_n$, then $\lambda$ is an eigenvalue of the matrix $H$ and a corresponding eigenvector is $(q_0(\lambda), q_1(\lambda), \cdots, q_{n-1}(\lambda))^T$;
		\item 
		If all the roots of $q_n$ are simple, then the characteristic polynomial of $H$ is precisely $\rho q_n$ with $\rho=\Pi_{i=1}^{n-1}h_{i,i+1}$, i.e.,
		\begin{equation}
			\det(x I_n - H) = \rho q_n(x),
		\end{equation}
		where $I_n$ denotes the identity matrix of order $n$.
	\end{enumerate}	
\end{thm}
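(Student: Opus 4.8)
The plan is to exploit the matrix--vector identity \eqref{eq:recurrence-polynomial-matx-vect}, which packages the entire recurrence into a single relation and makes both claims fall out with little computation. For the first claim, I would simply evaluate \eqref{eq:recurrence-polynomial-matx-vect} at $x=\lambda$, where $\lambda$ is a root of $q_n$. Since $q_n(\lambda)=0$, the term $q_n(\lambda)\vect{e}_n$ vanishes and the identity collapses to $H\vect{q}_{n-1}(\lambda)=\lambda\,\vect{q}_{n-1}(\lambda)$, so $\vect{q}_{n-1}(\lambda)$ is an eigenvector for the eigenvalue $\lambda$. The only point to verify is that this vector is nonzero, and that is immediate because its first component is $q_0(\lambda)=1$. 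This disposes of part 1.

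For the second claim, the key preliminary step is to pin down the degree and the leading coefficient of $q_n$. An easy induction on the recurrence \eqref{eq:recurrence-polynomial} shows that $q_i$ has degree exactly $i$ with leading coefficient $1/\prod_{k=1}^{i}h_{k,k+1}$: multiplying by $x$ raises the degree by one, while the subtracted sum involves only $q_0,\ldots,q_{i-1}$, all of strictly lower degree, so it cannot cancel the top-order term. Taking $i=n$ and recalling the convention $h_{n,n+1}=1$, this yields that $q_n$ has degree $n$ with leading coefficient $1/\rho$, so $\rho\,q_n$ is \emph{monic} of degree $n$ --- exactly matching the form of the characteristic polynomial $\det(xI_n-H)$.

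It then remains to show that these two monic degree-$n$ polynomials coincide, and this is where the simplicity hypothesis does the real work. By part 1, every root of $q_n$ is an eigenvalue of $H$, hence a root of $\det(xI_n-H)$. If all $n$ roots $\lambda_1,\ldots,\lambda_n$ of $q_n$ are simple, they are $n$ \emph{distinct} values, and a monic polynomial of degree $n$ is uniquely determined by $n$ distinct roots; since $\det(xI_n-H)$ is monic of degree $n$ and vanishes at each $\lambda_i$, it must equal $\prod_{i=1}^{n}(x-\lambda_i)=\rho\,q_n(x)$.

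I expect the main (though still mild) obstacle to be precisely this multiplicity bookkeeping. Without the simplicity assumption, part 1 only places the roots of $q_n$ \emph{among} the eigenvalues of $H$ and provides no control over matching algebraic multiplicities, so the clean identity $\det(xI_n-H)=\rho\,q_n$ could in principle fail; it is the distinctness of the $n$ roots that forces the two polynomials to agree term by term. A secondary point worth stating carefully is the leading-coefficient computation, since getting the normalization wrong would produce a spurious scalar factor; tracking the empty product $\prod_{k=1}^{0}=1$ for the base case $q_0=1$ keeps this clean.
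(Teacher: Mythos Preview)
Your proposal is correct. The paper does not give its own detailed proof of this theorem---it cites \cite{elouafi2009recursion} and simply remarks that the result follows ``immediately'' from the matrix--vector identity \eqref{eq:recurrence-polynomial-matx-vect}; your argument carries out precisely that route, so there is nothing further to compare.
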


By analyzing the eigenspace of the unreduced lower Hessenberg matrix, we have the following equivalent conditions for an unreduced lower Hessenberg matrix to be real diagonalizable. The proof is included in the appendix.
\begin{thm}\label{thm:real-diagonalizable}
	Let $H = (h_{ij})_{n\times n}$ be an unreduced lower Hessenberg matrix and $\{q_i\}_{0\le i\le n}$ is the associated polynomial sequence with $H$. The following conditions are equivalent:
	\begin{enumerate}
		\item $H$ is real diagonalizable;
		\item all the eigenvalues of $H$ are distinct and real;
		\item all the roots of $q_n$ are simple and real.
	\end{enumerate}	
\end{thm}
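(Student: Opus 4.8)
The plan is to prove the cycle of implications $(1)\Rightarrow(2)\Rightarrow(3)\Rightarrow(1)$. The linchpin of the whole argument is a structural fact that I would isolate first: for an unreduced lower Hessenberg matrix $H$, every eigenvalue has geometric multiplicity exactly one. To see this, fix any scalar $\lambda$ and delete the first column and the last row of $H-\lambda I_n$; because $H$ is lower Hessenberg with $h_{i,i+1}\neq 0$, the resulting $(n-1)\times(n-1)$ submatrix is lower triangular with diagonal entries $h_{1,2},\dots,h_{n-1,n}$, all nonzero. Hence its determinant is $\prod_{i=1}^{n-1}h_{i,i+1}\neq 0$, so $\operatorname{rank}(H-\lambda I_n)\geq n-1$ and $\dim\ker(H-\lambda I_n)\leq 1$. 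Thus whenever $\lambda$ is an eigenvalue its eigenspace is one-dimensional. I would also record that the recurrence \eqref{eq:recurrence-polynomial} makes $q_n$ a polynomial of degree exactly $n$ with leading coefficient $\rho^{-1}\neq0$, where $\rho=\prod_{i=1}^{n-1}h_{i,i+1}$, so $q_n$ has $n$ roots counted with multiplicity.

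With the geometric multiplicity lemma in hand, $(1)\Leftrightarrow(2)$ is routine linear algebra. If $H$ is real diagonalizable then all eigenvalues are real and, for each, algebraic multiplicity equals geometric multiplicity; since the latter is one, every eigenvalue is simple, giving $(2)$. Conversely, $n$ distinct eigenvalues force diagonalizability, and reality of the spectrum lets the eigenvectors be chosen real, giving $(1)$. For $(3)\Rightarrow(1)$ I would invoke Theorem~\ref{thm:root-is-eigenvalue}(2) directly: its hypothesis that all roots of $q_n$ are simple is part of $(3)$, so $\det(x I_n-H)=\rho\,q_n(x)$ with $\rho\neq 0$; hence the eigenvalues of $H$ are exactly the roots of $q_n$, i.e.\ $n$ distinct real numbers, and $H$ is real diagonalizable.

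The substantive step is $(2)\Rightarrow(3)$. By Theorem~\ref{thm:root-is-eigenvalue}(1) every root of $q_n$ is an eigenvalue of $H$, hence real under $(2)$, so all $n$ roots of $q_n$ are real. To conclude they are simple I would establish the reverse containment: every eigenvalue of $H$ is a root of $q_n$. Evaluating the identity \eqref{eq:recurrence-polynomial-matx-vect} at an eigenvalue $\lambda$ gives $(\lambda I_n-H)\vect{q}_{n-1}(\lambda)=q_n(\lambda)\,\vect{e}_n$, so if $q_n(\lambda)\neq 0$ then $\vect{e}_n\in\operatorname{range}(\lambda I_n-H)$. The key claim is that this is impossible: if $w$ is a nonzero left null vector of $\lambda I_n-H$ with $w_n=0$, then reading the equations $w^T(\lambda I_n-H)=0$ columnwise from index $n$ downward and repeatedly using $h_{i,i+1}\neq0$ forces $w_{n-1}=0$, then $w_{n-2}=0$, and inductively $w_1=0$, contradicting $w\neq 0$; hence $w_n\neq 0$ for every left eigenvector. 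Since $\operatorname{range}(\lambda I_n-H)=\{w\}^{\perp}$ and $\langle \vect{e}_n,w\rangle=w_n\neq 0$, we get $\vect{e}_n\notin\operatorname{range}(\lambda I_n-H)$, forcing $q_n(\lambda)=0$. Thus all $n$ distinct eigenvalues from $(2)$ are roots of the degree-$n$ polynomial $q_n$, so they must be its $n$ simple real roots, which is $(3)$.

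I expect the main obstacle to be precisely this reverse containment: showing $\vect{e}_n$ escapes the range of $\lambda I_n-H$, equivalently that the last entry of every left eigenvector is nonzero. The back-substitution must be carried out carefully as a genuine induction from index $n$ down to $1$, invoking the unreduced structure $h_{i,i+1}\neq0$ at each step. An alternative that sidesteps this eigenvector bookkeeping is to prove the unconditional identity $\det(x I_n-H)=\rho\,q_n(x)$ (for instance by applying Cramer's rule to the first component of \eqref{eq:recurrence-polynomial-matx-vect}), after which $(2)\Leftrightarrow(3)$ is immediate since the characteristic polynomial and $q_n$ share roots with equal multiplicities; I would mention this as a cleaner route if one is willing to strengthen Theorem~\ref{thm:root-is-eigenvalue}(2) to hold without the simple-root hypothesis.
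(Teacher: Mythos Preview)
Your argument is correct and rests on the same linchpin as the paper's: for an unreduced lower Hessenberg matrix every eigenvalue has geometric multiplicity one. The difference lies in how this fact is proved and then exploited. You establish it by a rank argument (the $(n-1)\times(n-1)$ triangular minor), which is clean but does not identify the eigenvector. The paper instead reads off the equations $Hr=\lambda r$ componentwise, which not only forces $\dim\ker(H-\lambda I)=1$ but simultaneously shows that the unique eigenvector \emph{is} $\vect{q}_{n-1}(\lambda)$. With that identification in hand, the step ``eigenvalue $\Rightarrow$ root of $q_n$'' is immediate from \eqref{eq:recurrence-polynomial-matx-vect}: $H\vect{q}_{n-1}(\lambda)=\lambda\vect{q}_{n-1}(\lambda)$ forces $q_n(\lambda)\vect{e}_n=0$.

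Because your rank argument discards this information, you are led to the left-eigenvector detour to show $\vect{e}_n\notin\operatorname{range}(\lambda I_n-H)$. That detour is valid, but note you are implicitly using that the \emph{left} null space is also one-dimensional (so that $\operatorname{range}(\lambda I_n-H)=\{w\}^{\perp}$ for a single $w$); this follows from the same rank count, and is worth stating. In short: your submatrix argument buys a quicker proof of the geometric-multiplicity lemma, while the paper's componentwise computation buys a one-line proof of $(2)\Rightarrow(3)$. Your closing remark that $\det(xI_n-H)=\rho\,q_n(x)$ holds unconditionally is correct and would indeed supersede both approaches.
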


\section{Moment closure for radiative transfer equation}\label{sec:moment-closure}

In this section, we first review the gradient based ML moment closure method for the RTE in slab geometry proposed in \cite{huang2021gradient}. Then, we present our approach to enforce the hyperbolicity of the ML moment closure model.  Our method for enforcing hyperbolicity comes from a direct relation we derive in Section \ref{sec:hmoment-closure} between the coefficients of the neural network in the gradient based model and the eigenvalues of coefficient matrix.  Given this relation, in Section \ref{sec:nn} we propose two neural network architectures where we directly learn the eigenvalues of the coefficient matrix $A$ such that the eigenvalues are real. The resulting setup produces distinct  eigenvalues  and there by guarantees that the learned gradient based closure is hyperbolic.

\subsection{Gradient based ML moment closure}

We consider the time-dependent RTE for a gray medium in slab geometry:
\begin{equation}\label{eq:rte}
	\partial_t f + v \partial_x f = {\sigma_s}\brac{\frac{1}{2} \int_{-1}^1 f dv - f} - \sigma_a f, \quad -1\le v\le 1
\end{equation}
Here, $f=f(x,v,t)$ is the specific intensity of radiation. The variable $v\in[-1, 1]$ is the cosine of the angle between the photon velocity and the $x$-axis. $\sigma_s = \sigma_s(x)\ge 0$ and $\sigma_a = \sigma_a(x)\ge 0$ are the scattering and absorption coefficients.

Denote the $k$-th order Legendre polynomial by $P_k = P_k(x)$. Define the $k$-th order moment by
\begin{equation}
	m_k(x,t) = \frac{1}{2} \int_{-1}^1 f(x,v,t) P_k(v) dv.
\end{equation}
Multiplying by $P_k(v)$ on both sides of \eqref{eq:rte} and integrating over $v\in[-1, 1]$, we derive the moment equations:
\begin{equation}
\begin{aligned}
	 \partial_t m_0 + \partial_x m_1 &= -  \sigma_a m_0 \\
	 \partial_t m_1 + \frac{1}{3} \partial_x m_0 + \frac{2}{3} \partial_x m_2  &= - ( \sigma_s +  \sigma_a) m_1 \\
	& \cdots \\
    \partial_t m_{N-1} + \frac{N-1}{2N-1} \partial_x m_{N-2} + \frac{N}{2N-1} \partial_x m_{N}  &= - ( \sigma_s +  \sigma_a) m_{N-1} \\
	 \partial_t m_N + \frac{N}{2N+1} \partial_x m_{N-1} + \frac{N+1}{2N+1} \partial_x m_{N+1}  &= - ( \sigma_s +  \sigma_a) m_N
\end{aligned}
\end{equation}

The above system is clearly not closed due to the existence of $\partial_x m_{N+1}$ in the last equation. The learning gradient approach proposed in \cite{huang2021gradient} is to find  a relation between $\partial_x m_{N+1}$ and the gradients on lower order moments:
\begin{equation}\label{eq:anstaz-learn-gradient}
	\partial_x m_{N+1} = \sum_{i=0}^N \mathcal{N}_i(m_0,m_1,\cdots,m_N) \partial_x m_i
\end{equation}
with $\mathcal{N}=(\mathcal{N}_0, \mathcal{N}_1, \cdots, \mathcal{N}_N):\mathbb{R}^{N+1}\rightarrow\mathbb{R}^{N+1}$ approximated by a neural network and learned from data. Plugging \eqref{eq:anstaz-learn-gradient} into the closure system, we derive the moment closure model:
\begin{equation}\label{eq:closure-model}
\begin{aligned}
	 \partial_t m_0 + \partial_x m_1 &= -  \sigma_a m_0 \\
	 \partial_t m_1 + \frac{1}{3} \partial_x m_0 + \frac{2}{3} \partial_x m_2  &= - ( \sigma_s +  \sigma_a) m_1 \\
	& \cdots \\
    \partial_t m_{N-1} + \frac{N-1}{2N-1} \partial_x m_{N-2} + \frac{N}{2N-1} \partial_x m_{N}  &= - ( \sigma_s +  \sigma_a) m_{N-1} \\
    \partial_t m_N + \frac{N}{2N+1} \partial_x m_{N-1} + \frac{N+1}{2N+1} \brac{\sum_{k=0}^N \mathcal{N}_k(m_0,m_1,\cdots,m_N) \partial_x m_k}  &= - ( \sigma_s +  \sigma_a) m_N.	 
\end{aligned}
\end{equation}
In the numerical tests, this approach is shown to be accurate in the optically thick regime, intermediate regime and the optically thin regime. Moreover, the accuracy of this gradient-based model is much better than the approach based on creating a ML closure directly trained to match the moments, as well as the conventional $P_N$ closure. However, this model exhibits numerical instability due to the loss of hyperbolicity \cite{huang2021gradient}. This severely restricts the application of this model, especially for long time simulations.

\subsection{Hyperbolic ML moment closure}
\label{sec:hmoment-closure}

In this work, our main idea to enforce the hyperbolicity is motivated by the observation that the coefficient matrix of the closure system is a lower Hessenberg matrix. We write the closure model \eqref{eq:closure-model} into an equivalent form:
\begin{equation}
    \partial_t \bm{m} + A \partial_x \bm{m} = S \bm{m}
\end{equation}
with $\bm{m} = (m_0, m_1, \cdots, m_N)^T$ and the coefficient matrix $A\in\mathbb{R}^{(N+1)\times(N+1)}$:
\begin{equation}\label{eq:coefficient-matrix}
	A = 
	\begin{pmatrix}
    0 				& 	1 				&	0  			& 0  	&	\dots 	& 	0 	\\
    \frac{1}{3} 	& 	0 				& \frac{2}{3} 	& 0  	&	\dots 	& 	0	\\
    0 & \frac{2}{5} & 	0 				& \frac{3}{5} 	& \dots & 0 \\
    \vdots & \vdots &   \vdots 			& \ddots & \vdots & \vdots \\
    0 & 0 & 	\dots 				& \frac{N-1}{2N-1} 	& 0 & \frac{N}{2N-1} \\
    a_0 & a_1 & \dots & a_{N-2} & a_{N-1} & a_N
	\end{pmatrix}
\end{equation}
with
\begin{equation}\label{eq:relation-a-N}
	a_j = 
	\left\{
	\begin{aligned}
	& \frac{N+1}{2N+1}\mathcal{N}_j, \quad & j\ne N-1, \\
	& \frac{N}{2N+1} + \frac{N+1}{2N+1}\mathcal{N}_j, \quad & j=N-1.
	\end{aligned}
	\right.
\end{equation}
and the source term
\begin{equation}
    S = \textrm{diag}(-\sigma_a, -(\sigma_s+\sigma_a), \cdots, -(\sigma_s+\sigma_a)).
\end{equation}
In what follows, we will use the properties of the Hessenberg matrix in Section \ref{sec:hessenberg} to analyze the real diagonalizability of the coefficient matrix $A$ in \eqref{eq:coefficient-matrix}.

We first write down the associated polynomial sequence of $A$ using the definition \eqref{eq:recurrence-polynomial}:
\begin{subequations}
\begin{align}
	q_0(x) &= 1, \label{eq:ml-recurrence-1} \\
	\frac{i}{2i-1} q_i(x) &= x q_{i-1}(x) - \frac{i-1}{2i-1} q_{i-2}(x), \quad i=1,\cdots,N \label{eq:ml-recurrence-2} \\
	q_{N+1}(x) &= x q_{N}(x) - \sum_{k=0}^N a_k q_k(x). \label{eq:ml-recurrence-3}
\end{align}	
\end{subequations}
Notice that \eqref{eq:ml-recurrence-2} is exactly the same as the recurrence relation for the Legendre polynomial. Thus, we have
\begin{equation}
	q_i(x) = P_i(x), \quad i=0,1,\cdots,N.
\end{equation}
Then from \eqref{eq:ml-recurrence-3}, we derive
\begin{equation}
	q_{N+1}(x) = \frac{N+1}{2N+1} P_{N+1}(x) + \frac{N}{2N+1} P_{N-1}(x) - \sum_{k=0}^N a_k P_k(x),
\end{equation}
where we used the recurrence relation for the Legendre polynomial:
\begin{equation}
	\frac{N+1}{2N+1} P_{N+1}(x) = x P_N(x) - \frac{N}{2N+1} P_{N-1}(x).
\end{equation}
By Theorem \ref{thm:root-is-eigenvalue}, it is easy to derive the following theorem:
\begin{thm}\label{thm:ml-real-diagonalizable}
	For the coefficient matrix $A$ in \eqref{eq:coefficient-matrix}, the associated polynomial sequence satisfies:
	\begin{subequations}
	\begin{align}
		q_i(x) &= P_i(x), \quad i=0,1,\cdots,N, \\
		q_{N+1}(x) &= \frac{N+1}{2N+1} P_{N+1}(x) + \frac{N}{2N+1} P_{N-1}(x) - \sum_{k=0}^N a_k P_k(x),
	\end{align}
	\end{subequations}
	where $P_n(x)$ denotes the Legendre polynomial of degree $n$.
	If all the roots of $q_{N+1}(x)$ are simple, then the characteristic polynomial of $A$ is:
	\begin{equation}
		\det(x I_{N+1} - A) = \rho q_{N+1}(x) = \rho \brac{\frac{N+1}{2N+1} P_{N+1}(x) + \frac{N}{2N+1} P_{N-1}(x) - \sum_{k=0}^N a_k P_k(x)}
	\end{equation}
	with $\rho = \frac{N!}{(2N-1)!!}$. If further assuming all the roots of $q_{N+1}(x)$ are simple and real, then all the eigenvalues of $A$ are distinct and real. In this case, the moment closure system is strictly hyperbolic. If further assuming all the roots of $q_{N+1}(x)$ are simple, real and lie in the interval $[-1, 1]$, then the moment closure system is strictly hyperbolic with physical characteristic speeds.
\end{thm}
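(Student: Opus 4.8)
The plan is to recognize that the whole statement is an application of the Hessenberg machinery of Section \ref{sec:hessenberg} to the specific matrix $A$ in \eqref{eq:coefficient-matrix}, with the only genuine computation being the value of the constant $\rho$. First I would check that $A$ is an unreduced lower Hessenberg matrix in the sense of Definition \ref{defn:hessenberg}: its only nonzero entries strictly above the main diagonal are the superdiagonal entries $h_{i,i+1}=\frac{i}{2i-1}$ for $1\le i\le N$, all of which are nonzero, so $A$ qualifies. The associated polynomial sequence $\{q_i\}$ of Definition \ref{defn:associated-polynomial} has already been identified above: the recurrence \eqref{eq:ml-recurrence-2} coincides with the three-term recurrence for Legendre polynomials, forcing $q_i=P_i$ for $0\le i\le N$, and \eqref{eq:ml-recurrence-3} combined with the Legendre recurrence yields the stated expression for $q_{N+1}$. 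Thus the two polynomial identities in the theorem require no further work.

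For the characteristic-polynomial claim I would invoke part 2 of Theorem \ref{thm:root-is-eigenvalue}. Under the hypothesis that all roots of $q_{N+1}$ are simple, that theorem gives $\det(xI_{N+1}-A)=\rho\, q_{N+1}(x)$ with $\rho=\prod_{i=1}^{N}h_{i,i+1}$, so the remaining task is to evaluate this product for $A$. Substituting $h_{i,i+1}=\frac{i}{2i-1}$ gives
\begin{equation}
\rho = \prod_{i=1}^{N} \frac{i}{2i-1} = \frac{\prod_{i=1}^N i}{\prod_{i=1}^N (2i-1)} = \frac{N!}{(2N-1)!!},
\end{equation}
which is exactly the claimed value. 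As a consistency check, the defining recurrence shows the leading coefficient of $q_{N+1}$ equals $\prod_{i=1}^{N+1}h_{i,i+1}^{-1}=\rho^{-1}$, since $h_{N+1,N+2}:=1$, so $\rho\, q_{N+1}$ is monic of degree $N+1$, matching $\det(xI_{N+1}-A)$.

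It then remains to translate the root conditions into hyperbolicity statements. If the roots of $q_{N+1}$ are in addition real, then by the equivalence in Theorem \ref{thm:real-diagonalizable} (condition 3 implies conditions 1 and 2) the matrix $A$ is real diagonalizable with $N+1$ distinct real eigenvalues; a first-order system $\partial_t \bm{m}+A\partial_x \bm{m}=S\bm{m}$ whose coefficient matrix has distinct real eigenvalues is by definition strictly hyperbolic, which is the second assertion. Finally, the characteristic speeds of the closure system are precisely the eigenvalues of $A$, which under the simplicity assumption coincide with the roots of $q_{N+1}$. Since the microscopic velocity in \eqref{eq:rte} satisfies $v\in[-1,1]$ (the normalized speed of light being $1$), requiring all roots to lie in $[-1,1]$ is exactly the statement that the characteristic speeds are bounded by the speed of light, i.e.\ the system is strictly hyperbolic with physical characteristic speeds.

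I expect this proof to be bookkeeping rather than to contain a hard obstacle, since all the analytic content is already carried by Theorems \ref{thm:root-is-eigenvalue} and \ref{thm:real-diagonalizable}. The one place demanding care is the role of the simplicity hypothesis: it is needed not merely to obtain distinctness of the eigenvalues, but to guarantee via Theorem \ref{thm:root-is-eigenvalue} that $q_{N+1}$ is, up to the scalar $\rho$, the \emph{full} characteristic polynomial rather than a proper factor of it. This is precisely what licenses identifying the eigenvalues of $A$ with the roots of $q_{N+1}$ in the last two claims, and without it the correspondence between roots and characteristic speeds could fail.
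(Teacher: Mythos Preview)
Your proposal is correct and follows essentially the same approach as the paper: the paper derives the associated polynomial sequence immediately before the theorem statement (equations \eqref{eq:ml-recurrence-1}--\eqref{eq:ml-recurrence-3}) and then simply remarks that the theorem follows from Theorem \ref{thm:root-is-eigenvalue}. Your write-up is in fact more explicit than the paper's, since you spell out the evaluation of $\rho=\prod_{i=1}^N h_{i,i+1}=N!/(2N-1)!!$ and the appeal to Theorem \ref{thm:real-diagonalizable} for the hyperbolicity claims, whereas the paper leaves these as understood.
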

\begin{rem}
	From Theorem \ref{thm:real-diagonalizable}, the condition that all the roots of $q_{N+1}(x)$ are simple and real, is also necessary for the moment closure system to be hyperbolic.
\end{rem}

Next, we will derive the relation between the eigenvalues of $A$ (or the roots of $q_{N+1}(x)$) and the weights of the gradients in \eqref{eq:anstaz-learn-gradient}. In particular, we will represent $\{\mathcal{N}_k \}_{0\le k\le N}$ in \eqref{eq:anstaz-learn-gradient} using the eigenvalues of $A$.

We denote the distinct real eigenvalues of $A$ by $\{r_k\}_{0\le k \le N}$. Then, by Theorem \ref{thm:ml-real-diagonalizable}, we have
\begin{equation}
	(x-r_0)(x-r_1)\cdots(x-r_N) = \rho \brac{\frac{N+1}{2N+1} P_{N+1}(x) + \frac{N}{2N+1} P_{N-1}(x) - \sum_{k=0}^N a_k P_k(x)}.
\end{equation}
First, we expand the characteristic polynomial using a set of monomial basis:
\begin{equation}
	\det(x I_{N+1} - A) = c_0 + c_1 x + \cdots + c_N x^N + x^{N+1}.
\end{equation}
Using Vieta's formulas, we relate the coefficients $\{c_k\}_{0\le k \le N}$ to the sums and products of its roots $\{r_k\}_{0\le k \le N}$:
\begin{equation}\label{eq:transform-r-to-c}
\begin{aligned}
	& r_0 + r_1 + \cdots + r_{N-1} + r_N = - c_N, \\
	& (r_0r_1 + r_0r_2 + \cdots + r_0r_N) + (r_1r_2 + r_1r_3 + \cdots + r_1r_N) + \cdots + r_{N-1}r_N = c_{N-1}, \\
	& \qquad \vdots \\
	& r_0 r_1 \cdots r_{N-1} r_N = (-1)^{N+1} c_0,
\end{aligned}
\end{equation}
or equivalently written as a compact formulation
\begin{equation}
	\sum_{0\le i_1 < i_2 < \cdots < i_k\le N} \brac{\Pi_{j=1}^k r_{i_j} } = (-1)^k c_{N+1-k}, \quad k = 1, 2, \cdots, N+1.
\end{equation}
Here the indices $i_k$ are sorted in strictly increasing order to ensure each product of $k$ roots is used exactly once.

Then, we establish the relationship between $\{c_k\}_{0\le k\le N}$ to $\{a_k\}_{0\le k\le N}$.
Using the generating function of Legendre polynomials, one can express the monomial in terms of a summation of Legendre polynomials \cite{szeg1939orthogonal}. We present the conclusion in the following lemma and include the proof in the appendix.
\begin{lem}\label{lem:monomial-to-legendre}
	For any integer $m\ge 0$, there holds the following equality:
	\begin{equation}\label{eq:monomial-to-legendre-floor}
		x^m = \sum_{k=0}^{\lfloor m/2\rfloor} F(m,k) P_{m-2k}(x),
	\end{equation}
	with $F(m,k) = \frac{m!(2m-4k+1)}{2^k k!(2m-2k+1)!!}$.
	Here $P_n(x)$ is the $n$-th order Legendre polynomial, and $\lfloor \cdot \rfloor$ is the floor function which takes a real number $x$ as input, and gives the greatest integer less than or equal to $x$ as output.	
\end{lem}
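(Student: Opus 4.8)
The plan is to prove \eqref{eq:monomial-to-legendre-floor} by orthogonal projection onto the Legendre basis, which is the most transparent route even though the cited source \cite{szeg1939orthogonal} phrases it through the generating function. The starting point is the orthogonality relation $\int_{-1}^1 P_j(x) P_n(x)\, dx = \frac{2}{2n+1}\delta_{jn}$. Since $x^m$ is a polynomial of degree $m$, it admits a finite expansion $x^m = \sum_{n=0}^m c_n P_n(x)$, and pairing against $P_n$ gives the projection coefficients
\[
	c_n = \frac{2n+1}{2}\int_{-1}^1 x^m P_n(x)\, dx.
\]
The first step is a parity observation: $x^m$ has parity $(-1)^m$ and $P_n$ has parity $(-1)^n$, so the integrand is odd and the integral vanishes unless $m-n$ is even; moreover $c_n=0$ for $n>m$ by orthogonality to lower-degree content. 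Hence only the indices $n=m-2k$ with $0\le k\le\lfloor m/2\rfloor$ survive, which already reproduces exactly the index set appearing in the lemma.

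The second step is to evaluate $I_{m,k}:=\int_{-1}^1 x^m P_{m-2k}(x)\,dx$ in closed form. I would use Rodrigues' formula $P_n(x)=\frac{1}{2^n n!}\frac{d^n}{dx^n}(x^2-1)^n$ and integrate by parts $n$ times; because $(x^2-1)^n$ vanishes to order $n$ at both endpoints, all boundary terms drop out, leaving
\[
	I_{m,k} = \frac{(-1)^n}{2^n n!}\,\frac{m!}{(m-n)!}\int_{-1}^1 x^{m-n}(x^2-1)^n\, dx,
\]
with $n=m-2k$. Writing $(x^2-1)^n=(-1)^n(1-x^2)^n$, using evenness of the integrand, and substituting $u=x^2$ converts the remaining integral into a Beta function $B\!\brac{k+\tfrac12,\,n+1}=\frac{\Gamma(k+\frac12)\Gamma(n+1)}{\Gamma(k+n+\frac32)}$.

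The third step, and the one I expect to be the main obstacle, is the purely algebraic reconciliation of the Beta-function value with the stated closed form $F(m,k)=\frac{m!(2m-4k+1)}{2^k k!(2m-2k+1)!!}$. Here I would feed in the half-integer Gamma identity $\Gamma(j+\frac12)=\frac{(2j)!}{4^j j!}\sqrt{\pi}$ (applied at $j=k$ and $j=k+n+1$) together with the factorial--double-factorial conversion $(2\ell)!=2^{\ell}\ell!\,(2\ell-1)!!$ taken at $\ell=m-k+1$, then multiply by the prefactor $\frac{2n+1}{2}=\frac{2m-4k+1}{2}$. The bookkeeping is delicate because several powers of $2$ and competing factorial/double-factorial factors must cancel precisely; tracking $2^{n}$ versus $4^{\,k+n+1}/4^{k}$ and collapsing $(2m-2k+2)!$ into $2^{m-k+1}(m-k+1)!\,(2m-2k+1)!!$ is where a sign or exponent error is most likely to creep in. Carrying this through yields $c_{m-2k}=F(m,k)$, which is the claim. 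As a consistency check (and an alternative proof I would keep in reserve), the identity can also be derived by strong induction on $m$ using the three-term recurrence $xP_n=\frac{n+1}{2n+1}P_{n+1}+\frac{n}{2n+1}P_{n-1}$, which reduces the statement to verifying that $F$ satisfies the induced two-term recurrence in $(m,k)$; this avoids the integral entirely but trades it for a comparably lengthy coefficient verification.
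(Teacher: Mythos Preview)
Your proposal is correct and complete in outline, but the route differs from the paper's. The paper also computes the projection coefficients $c_n = \frac{2n+1}{2}\int_{-1}^1 x^m P_n(x)\,dx$, but evaluates the integral $a_{m,n}=\int_{-1}^1 x^m P_n(x)\,dx$ via the generating function: writing $\sum_n t^n a_{m,n} = \int_{-1}^1 x^m(1-2tx+t^2)^{-1/2}\,dx$ and applying the substitution $1-ts=\sqrt{1-2tx+t^2}$ (equivalently $x=s+\tfrac{t}{2}(1-s^2)$) turns the right side into $\int_{-1}^1\bigl(s+\tfrac{t}{2}(1-s^2)\bigr)^m ds$, from which one reads off the coefficient of $t^n$ directly as a binomial term times $\int_0^1 2 s^{2k}(1-s^2)^{m-2k}\,ds$. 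Your approach reaches exactly the same Beta integral, but through Rodrigues' formula and $n$-fold integration by parts instead. Both methods converge on $B(k+\tfrac12,\,m-2k+1)$ and then require the same half-integer Gamma bookkeeping you describe. Your Rodrigues route is the more standard textbook computation and keeps the argument self-contained; the paper's generating-function trick is slicker at producing the parity selection and the binomial prefactor in one stroke, but relies on spotting the substitution. Either way the algebraic tail---collapsing the Gamma values into the double-factorial form $F(m,k)$---is identical, and the paper handles it with $\Gamma(n+\tfrac12)=\frac{(2n-1)!!}{2^n}\sqrt{\pi}$ rather than the $(2j)!/(4^j j!)$ variant you cite, which may shorten the exponent tracking you are worried about.
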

We rewrite \eqref{eq:monomial-to-legendre-floor} into an equivalent formulation:
\begin{equation}\label{eq:monomial-to-legendre}
	x^m = \sum_{k=0}^m b_{mk} P_k(x), \quad m\ge 0,
\end{equation}
with
\begin{equation}
	b_{mk}=\left\{
		\begin{aligned}
		& F(m, \frac{1}{2}(m-k)), \quad & \textrm{if} ~ m\equiv k \;(\bmod\; 2), \\
		& 0,  \quad & \textrm{otherwise}.
		\end{aligned}
		\right.
\end{equation}
From this formula, we can expand any polynomial $\sum_{i=0}^n c_i x^i$ in terms of Legendre polynomials:
\begin{equation}
	\sum_{i=0}^n c_i x^i = \sum_{i=0}^n c_i \brac{\sum_{k=0}^i b_{ik} P_k(x)} = \sum_{k=0}^n \brac{\sum_{i=k}^n c_ib_{ik}} P_k(x) = \sum_{k=0}^n \alpha_k P_k(x),
\end{equation}
with 
\begin{equation}
	\alpha_k = \sum_{i=k}^n c_ib_{ik}.
\end{equation}
We apply the above relation to derive the relationship between $\{c_k\}_{0\le k\le N}$ to $\{a_k\}_{0\le k\le N}$:
\begin{equation}
	c_0 + c_1 x + \cdots + c_N x^N + x^{N+1} = \rho \brac{\frac{N+1}{2N+1} P_{N+1}(x) + \frac{N}{2N+1} P_{N-1}(x) - \sum_{k=0}^N a_k P_k(x)},
\end{equation}
and obtain
\begin{subequations}
\begin{align}
	-\rho a_k &= \sum_{i=k}^{N+1} c_i b_{ik}, \quad k = 0,1,\cdots,N-3,N-2,N, \label{eq:monomial-to-legendre-1} \\
	\rho (\frac{N}{2N+1} - a_{N-1}) &= \sum_{i=N-1}^{N+1} c_i b_{i,N-1}, \label{eq:monomial-to-legendre-2} \\
	\rho \frac{N+1}{2N+1} &= \sum_{i=N+1}^{N+1} c_i b_{i,N+1}, \label{eq:monomial-to-legendre-3}
\end{align}
\end{subequations}
with $c_{N+1}:=1$. The last one \eqref{eq:monomial-to-legendre-3} is automatically satisfied since $b_{N+1,N+1}=F(N+1,0)=\frac{(N+1)!(2N+3)}{(2N+3)!!} = \frac{(N+1)!}{(2N+1)!!}$.

Lastly, we rewrite \eqref{eq:monomial-to-legendre-1}-\eqref{eq:monomial-to-legendre-2} in terms of $\{\mathcal{N}_{k}\}_{0\le k\le N}$ using the relation \eqref{eq:relation-a-N}:
\begin{equation}\label{eq:transform-c-to-n}
	\mathcal{N}_k = - \frac{2N+1}{\rho (N+1)}\sum_{i=k}^{N+1} c_i b_{ik}, \quad k = 0,1,\cdots,N,
\end{equation}
with $c_{N+1}:=1$.

Now, together with \eqref{eq:transform-r-to-c} and \eqref{eq:transform-c-to-n}, we have expressed $\{\mathcal{N}_k\}_{0\le k\le N}$ using the eigenvalues $\{r_k\}_{0\le k\le N}$.

\section{Architectures and training of the neural network}\label{sec:nn}

In this section, we provide the architectures and training of the proposed neural networks that enforces the hyperbolicity of the closure system. 

\subsection{Architectures of the neural network}

We start with the first neural network architecture. As shown in Figure \ref{fig:schematic-nn}, this neural network begins with a fully connected neural network denoted by $\mathcal{M}_{\theta}:\mathbb{R}^{N+1}\rightarrow\mathbb{R}^{N+1}$ with the input being the lower order moments $(m_0, m_1, \cdots,m_{N})$ and the output denoted by $(z_0, z_1, \cdots,z_{N})$. Here $\theta$ denotes the collection of all the parameters to be trained in the neural network. It is then followed by a component-wise hyperbolic tangent function to enforce the boundness of the eigenvalues, i.e. $r_i = \tanh(z_i)$ for $i=0,1,\cdots,N$. Lastly, two sublayers representing the Vieta's formula \eqref{eq:transform-r-to-c} and a linear transformation \eqref{eq:transform-c-to-n} are applied to produce the weights $(\mathcal{N}_0, \mathcal{N}_1, \cdots, \mathcal{N}_N)$ in the gradient based closure in \eqref{eq:anstaz-learn-gradient} as the final output. For the ML moment closure model resulted by this neural network in Figure \ref{fig:schematic-nn}, we have the following conclusion:
\begin{thm}\label{thm:nn-bound}
    The ML moment closure model \eqref{eq:closure-model} resulting from the neural network with bounded eigenvalues shown in Figure \ref{fig:schematic-nn} is weakly hyperbolic. Moreover, it guarantees the physical characteristic speeds, i.e.,  the eigenvalues lie in the interval $[-1, 1]$.
\end{thm}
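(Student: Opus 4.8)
The plan is to treat the two assertions separately, since the boundedness of the characteristic speeds is essentially built into the architecture while the hyperbolicity requires an argument. The easy half is the bound: the intermediate quantities feeding the Vieta and Legendre sublayers are $r_i=\tanh(z_i)$, and because $\tanh$ maps $\mathbb{R}$ onto $(-1,1)$, every $r_i$ lies in $(-1,1)\subset[-1,1]$ no matter what the fully connected block $\mathcal{M}_\theta$ outputs. Thus it suffices to show that the $r_i$ are precisely the eigenvalues of the coefficient matrix $A$ in \eqref{eq:coefficient-matrix} and that they are all real; the bound then follows for free and the system is at least weakly hyperbolic.

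First I would read off what the post-processing sublayers encode. The Vieta layer \eqref{eq:transform-r-to-c} defines $c_0,\dots,c_N$ to be exactly the coefficients of the monic polynomial $\prod_{i=0}^N(x-r_i)=x^{N+1}+c_Nx^N+\cdots+c_0$, and the linear layer \eqref{eq:transform-c-to-n} is, by its derivation in Section \ref{sec:hmoment-closure}, the unique choice of weights $\mathcal{N}_k$ for which the Legendre expansion of $\rho\,q_{N+1}$ reproduces these $c_k$. Tracing this backwards gives the key identity by construction, namely $\rho\,q_{N+1}(x)=\prod_{i=0}^N(x-r_i)$, so the roots of the associated polynomial $q_{N+1}$ are exactly the real numbers $r_0,\dots,r_N$. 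By part 1 of Theorem \ref{thm:root-is-eigenvalue}, each such $r_i$ is therefore an eigenvalue of $A$.

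The main obstacle is promoting ``each $r_i$ is an eigenvalue'' to ``the $r_i$ exhaust the spectrum,'' because Theorem \ref{thm:root-is-eigenvalue}(2) (equivalently Theorem \ref{thm:ml-real-diagonalizable}) equates the characteristic polynomial with $\rho\,q_{N+1}$ only under the assumption that the roots are simple, and the $\tanh$ layer gives no control over whether the $z_i$, hence the $r_i$, are distinct. I would bridge this gap by a density argument. Through \eqref{eq:relation-a-N} and \eqref{eq:transform-r-to-c}--\eqref{eq:transform-c-to-n} the entries $a_k$ of $A$ are polynomial functions of $(r_0,\dots,r_N)$, so for each fixed $x$ both $\det(xI_{N+1}-A)$ and $\rho\,q_{N+1}(x)=\prod_i(x-r_i)$ are polynomials in $(r_0,\dots,r_N)$. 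On the set where the $r_i$ are pairwise distinct the roots of $q_{N+1}$ are simple, so Theorem \ref{thm:ml-real-diagonalizable} forces these two polynomials to coincide there; since this set is dense, the identity $\det(xI_{N+1}-A)=\prod_{i=0}^N(x-r_i)$ holds for every configuration of the $r_i$. Consequently the eigenvalues of $A$ are exactly $r_0,\dots,r_N$, all real and all in $[-1,1]$, which is both the weak hyperbolicity and the physical speed bound.

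Finally, to justify that ``weakly'' is the correct and unimprovable conclusion, I would observe that nothing in the construction prevents $z_i=z_j$, and hence a repeated eigenvalue $r_i=r_j$. For an unreduced lower Hessenberg matrix the eigenspace of any eigenvalue $\lambda$ is one-dimensional, spanned by the single vector $(q_0(\lambda),\dots,q_N(\lambda))^T$ of Theorem \ref{thm:root-is-eigenvalue}(1); thus a repeated eigenvalue has geometric multiplicity strictly below its algebraic multiplicity and $A$ is not diagonalizable, precisely the failure of condition (1) in Theorem \ref{thm:real-diagonalizable}. This is why the first architecture, which enforces realness and boundedness but not distinctness, can only be guaranteed weakly hyperbolic, and it motivates the separate distinctness-enforcing design behind Theorem \ref{thm:nn-distinct}.
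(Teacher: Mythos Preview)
Your argument is correct and tracks the paper's intended reasoning. The paper does not supply a formal proof of this theorem; it treats the result as an immediate consequence of the construction in Section~\ref{sec:hmoment-closure} together with the range of $\tanh$, and then observes in the paragraph following the statement that distinctness of the $r_i$ is not enforced, so only weak hyperbolicity can be claimed.

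Your density argument is in fact more careful than what the paper provides. Theorem~\ref{thm:root-is-eigenvalue}(2) and Theorem~\ref{thm:ml-real-diagonalizable} identify $\det(xI_{N+1}-A)$ with $\rho\,q_{N+1}$ only under the assumption that the roots are simple, and the paper never explicitly closes this gap when some of the $r_i$ coincide. Your observation that both sides are polynomial in $(r_0,\dots,r_N)$ and agree on the dense set of distinct configurations is a clean way to extend the identity $\det(xI_{N+1}-A)=\prod_{i=0}^N(x-r_i)$ to all cases, thereby rigorously justifying that the spectrum of $A$ is exactly $\{r_0,\dots,r_N\}$ even with repetitions. Your final paragraph, invoking the one-dimensionality of each eigenspace (Lemma~\ref{lem:geometric-multiplicity}) to explain why repeated $r_i$ genuinely obstruct diagonalizability, is also a welcome addition that the paper only alludes to informally.
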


\begin{figure}
    \centering
    \includegraphics[width=0.9\textwidth, clip=true, trim=0mm 75mm 0mm 35mm]{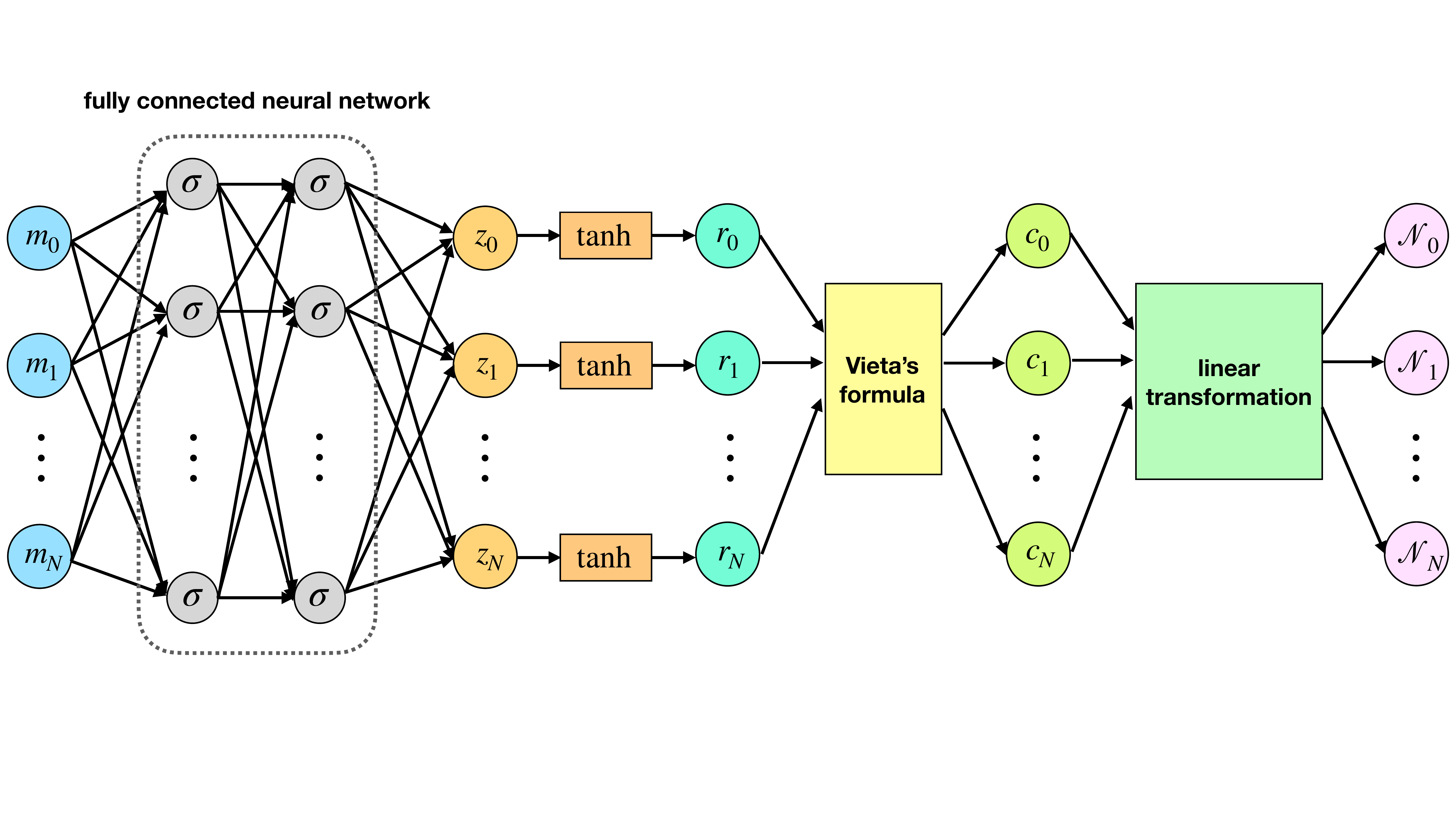}
    \caption{
    Schematic of the neural network with bounded eigenvalues. Input: the moments $(m_0, m_1, \cdots, m_N)$, output: the weights $(\mathcal{N}_0, \mathcal{N}_1, \cdots, \mathcal{N}_N)$ in the gradient based closure in \eqref{eq:anstaz-learn-gradient}.
    The Vieta's formula is given in \eqref{eq:transform-r-to-c}. The linear transformation from $(c_0, c_1, \cdots, c_N)$ to $(\mathcal{N}_0, \mathcal{N}_1, \cdots, \mathcal{N}_N)$ is given in \eqref{eq:transform-c-to-n}.
    }
    \label{fig:schematic-nn}.
\end{figure}

There is a small gap between the implementation of the neural network in Figure \ref{fig:schematic-nn} and the theory presented in previous sections. From Theorem \ref{thm:real-diagonalizable}, all the eigenvalues being distinct and real is a necessary and sufficient condition for the moment closure system to be hyperbolic. However, we do not force all the eigenvalues to be distinct in the current neural network architecture. Thus, this only guarantees that the resulting system is theoretically weakly hyperbolic instead of the system being hyperbolic, and might cause an instability issue.

In the numerical tests in Section \ref{sec:numerical-test}, we observe that the ML closure model is numerically stable for $N\ge6$. Meanwhile, for $N\le5$, the model has some stability issues, and these stability issues are associated with the case when two of the eigenvalues are within $10^{-3}$ of each other on a range of grid points, see the detailed discussion in Figure \ref{fig:const-distinct-instability} in Section \ref{sec:numerical-test}. 

To fix this problem, we tried several approaches by enforcing that the eigenvalues are distinct (or well separated). The first approach is to divide the interval $[-1, 1]$ into $(N+1)$ uniform subintervals with some threshold gap between two neighbouring subintervals: $I_k = [-1 + \frac{2k}{N+1} + \gamma, -1 + \frac{2(k+1)}{N+1} - \gamma]$ for $k=0,\cdots,N$. Then, we put exactly one eigenvalue into each subinterval, enforced by a scaled hyperbolic tangent function. Here $\gamma\ge0$ is a small number to guarantee a minimum distance of any two eigenvalues. We take $\gamma=0$ and $10^{-3}$ in the implementation. This approach is motivated by the fact that, in the $P_N$ closure, each subinterval contains exactly one eigenvalue. However, we find that, the neural network results in large training errors in the training process, 
which are generally larger than 14\% with  $N=3,4,\cdots,10$.
In these tests, we fix the number of nodes to be 64 and the number of layers to be 6. 
This indicates that the assumption of the uniform distribution of the eigenvalues is too restrictive, so that the approximation power of the neural network is not enough to produce an accurate closure. We will not focus on this neural network in the afterwards.

The other approach is to replace the hyperbolic tangent layer in Figure \ref{fig:schematic-nn} with some other postprocessing layers. As illustrated in Figure \ref{fig:schematic-nn-distinct}, we first applied some positive function, $\kappa$, to the outputs of the fully-connected neural network except for the first component:
\begin{equation}
    \tilde{z}_0 = z_0, \qquad \tilde{z}_i = \kappa(z_i), \quad i=1,\cdots,N,
\end{equation}
Here $\kappa=\kappa(x) \ge \gamma > 0$ is a strictly positive function taken as
\begin{equation}
    \kappa(x) = \ln(1 + e^x) + \gamma
\end{equation}
with $\gamma=0.1$.
Then, it is followed by a linear transformation:
\begin{equation}\label{eq:r-z-linear-transform}
    r_i = \sum_{k=0}^i \tilde{z}_k, \quad i=0,\cdots,N,
\end{equation}
which produces the eigenvalues of the closure system. Next, the Vieta's formula and the linear transformation are imposed as in Figure \ref{fig:schematic-nn}. This approach can guarantee that the eigenvalues are distinct, i.e. $r_0 < r_1 < \cdots < r_N$, but may lose the boundness property of the eigenvalues. Nevertheless, in the numerical simulations, we observe that the model has the physical characteristic speeds for most of the time although this constrain is not enforced explicitly, see the discussion in Figure \ref{fig:const-long-time} in Section \ref{sec:numerical-test}. For the ML moment closure model resulted from this neural network in Figure \ref{fig:schematic-nn-distinct}, we have the following conclusion:
\begin{thm}\label{thm:nn-distinct}
    The ML moment closure model \eqref{eq:closure-model} resulted from the neural network with distinct eigenvalues shown in Figure \ref{fig:schematic-nn-distinct} is strictly hyperbolic.
\end{thm}

\begin{figure}
    \centering
    \includegraphics[width=1.0\textwidth, clip=true, trim=0mm 90mm 20mm 40mm]{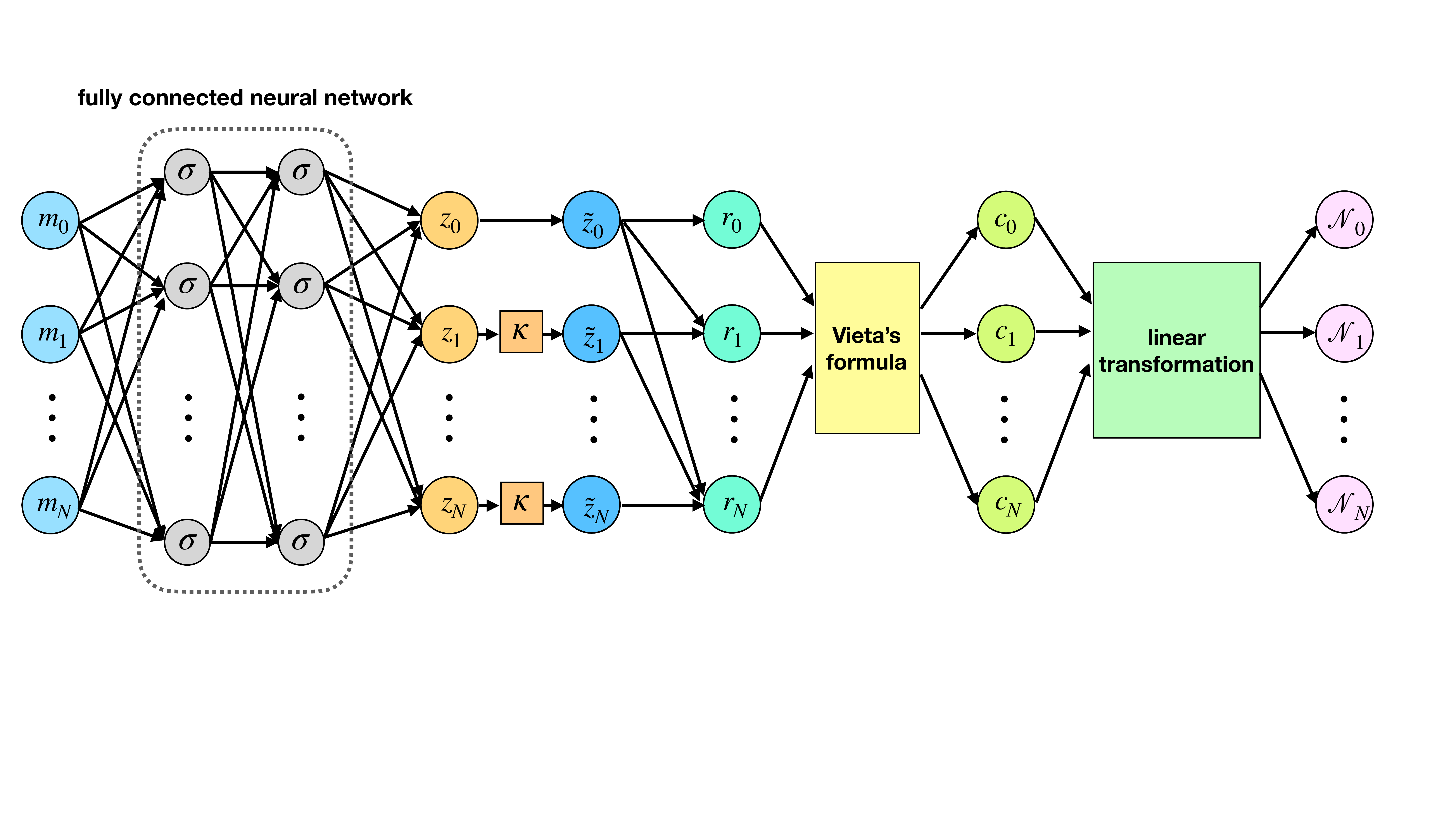}
    \caption{
    Schematic of the neural network with distinct eigenvalues. Input: the moments $(m_0, m_1, \cdots, m_N)$, output: the weights $(\mathcal{N}_0, \mathcal{N}_1, \cdots, \mathcal{N}_N)$ in the gradient based closure in \eqref{eq:anstaz-learn-gradient}. Here, $\kappa=\kappa(x)>0$ is a positive function. The layer connecting $(\tilde{z}_0, \tilde{z}_1, \cdots, \tilde{z}_N)$ and $(r_0, r_1, \cdots, r_N)$ is given in \eqref{eq:r-z-linear-transform}. The Vieta's formula is given in \eqref{eq:transform-r-to-c}. The linear transformation from $(c_0, c_1, \cdots, c_N)$ to $(\mathcal{N}_0, \mathcal{N}_1, \cdots, \mathcal{N}_N)$ is given in \eqref{eq:transform-c-to-n}.
    }
    \label{fig:schematic-nn-distinct}.
\end{figure}

We remark that the the current neural network architectures could not guarantee (strongly) hyperbolicity and the physical characteristic speeds simultaneously. We also tried to enforce the two desired properties by adding some penalty terms into the loss function, but we did not get satisfactory results. Nevertheless, we will more fully explore this direction in our future work.

\subsection{Training of the neural network}

For the training of the neural network, we take 1000 total epochs (the number of iterations in the optimization process). We investigated two activation functions including the hyperbolic tangent ($\tanh$) function and Rectified Linear Unit (ReLU) function. The learning rate is set to be $10^{-3}$ in the initial epoch and decays by 0.5 every 100 epochs. The $L^2$ regularization is applied with weight $10^{-7}$. The batch size is taken to be 1024. The training is implemented within the PyTorch framework \cite{paszke2019pytorch}. We use the same hyperparameters for the two neural networks.

Following \cite{huang2021gradient}, in the training process, the loss function is taken to be:
\begin{equation}
    \mathcal{L} = \frac{1}{N_{\textrm{data}}}\sum_{j,n}\abs{{\partial_x m^{\textrm{true}}_{N+1}(x_j, t_n) - \partial_x m^{\textrm{appx}}_{N+1}(x_j, t_n)}}^2.
\end{equation}
Here, $\partial_x m^{\textrm{true}}_{N+1}(x_j, t_n)$ denotes the spatial derivative of $(N+1)$-th order moment at $x=x_j$ and $t=t_n$ computed from the kinetic solver and $\partial_x m^{\textrm{appx}}_{N+1}(x_j, t_n)$ comes from the evaluation of the neural network using \eqref{eq:anstaz-learn-gradient}.

Following \cite{huang2021gradient}, the training data comes from numerically solving the RTE using the space-time discontinuous Galerkin (DG) method \cite{crockatt2017arbitrary,crockatt2019hybrid} with a range of initial conditions in the form of truncated Fourier series and different scattering and absorption coefficients which are constants over the computational domain, see the details in \cite{huang2021gradient}. We train the neural network with 100 different initial data sets. For each initial data set, we run the numerical solver up to $t=1$. The other parameters are the same as in \cite{huang2021gradient}.

To evaluate the accuracy in the training process, we define the relative $L^2$ error for the gradient  to be
\begin{equation}
	E_2 = \sqrt{ \frac{\sum_{j,n}({\partial_x m^{\textrm{true}}_{N+1}(x_j, t_n) - \partial_x m^{\textrm{appx}}_{N+1}(x_j, t_n)})^2}{\sum_{j,n}({\partial_x m^{\textrm{true}}_{N+1}(x_j, t_n)})^2} }.
\end{equation}

The depth and width of neural networks (i.e., the number of hidden layers and the number of nodes in the hidden layers) are crucial hyperparameters in a neural network. Here, we perform a grid search to find the optimal hyperparameters of the neural network including the number of layers and the number of nodes in the first fully-connected neural network $\mathcal{M}_{\theta}$. In particular, we take the number of layers to be $\{2, 3, \cdots, 10\}$ and the number of nodes to be $\{16, 32, \cdots, 256\}$. For the first neural network in Figure \ref{fig:schematic-nn}, the relative $L^2$ errors in the training data with different depths and widths, and $\tanh$ and ReLU activation functions are shown in Figure \ref{fig:training}. Here, we only show the cases with the number of moments to be $N=5, 7, 9$, the cases with $N=6,8$ are similar. With the ReLU activation function, the error decreases when we increase the number of layers and nodes in hidden layers until it saturates, see Figure \ref{fig:training} (b) and Figure \ref{fig:training} (d). However, we observe a different phenomenon with the hyperbolic tangent activation function. When we increase the depth and width, the error decreases only when the network stay relatively small widths 16, 32 and 64 in Figure \ref{fig:training} (a) and widths 16 and 32 in Figure \ref{fig:training} (c). When the neural networks get deeper, the error increases with width. This numerical observation is similar to the well-known vanishing gradient problem. In our current setup, the problem is probably caused by the strong nonlinearity of the Vieta's formula after the fully connected neural network, which stops the neural network from further training. The hyperbolic tangent function, as the activation function, has gradients in the range of $(0, 1)$, which makes it easy for the  neural network  to become stuck in a local minimum due to the vanishing gradient problem. ReLU suffers less from the vanishing gradient problem than the hyperbolic tangent function, because it only saturates in one direction, the one with negative inputs. Other solutions to the vanishing gradient problem, such as residual neural networks (ResNet) \cite{he2016residual} and batch normalization \cite{ioffe2015batch}, may also be applied here to achieve better performance. We will explore this direction in our future work. Moreover, these tests indicate that taking number of layers to be 6 and number of nodes to be 64 and ReLU activation function are good hyperparameters for our neural network. As such these are the values used in all the numerical tests in Section \ref{sec:numerical-test} unless otherwise stated.
\begin{figure}
    \centering
    \subfigure[$\tanh$, $N = 5$]{
    \begin{minipage}[b]{0.46\textwidth}
    \includegraphics[width=1\textwidth]{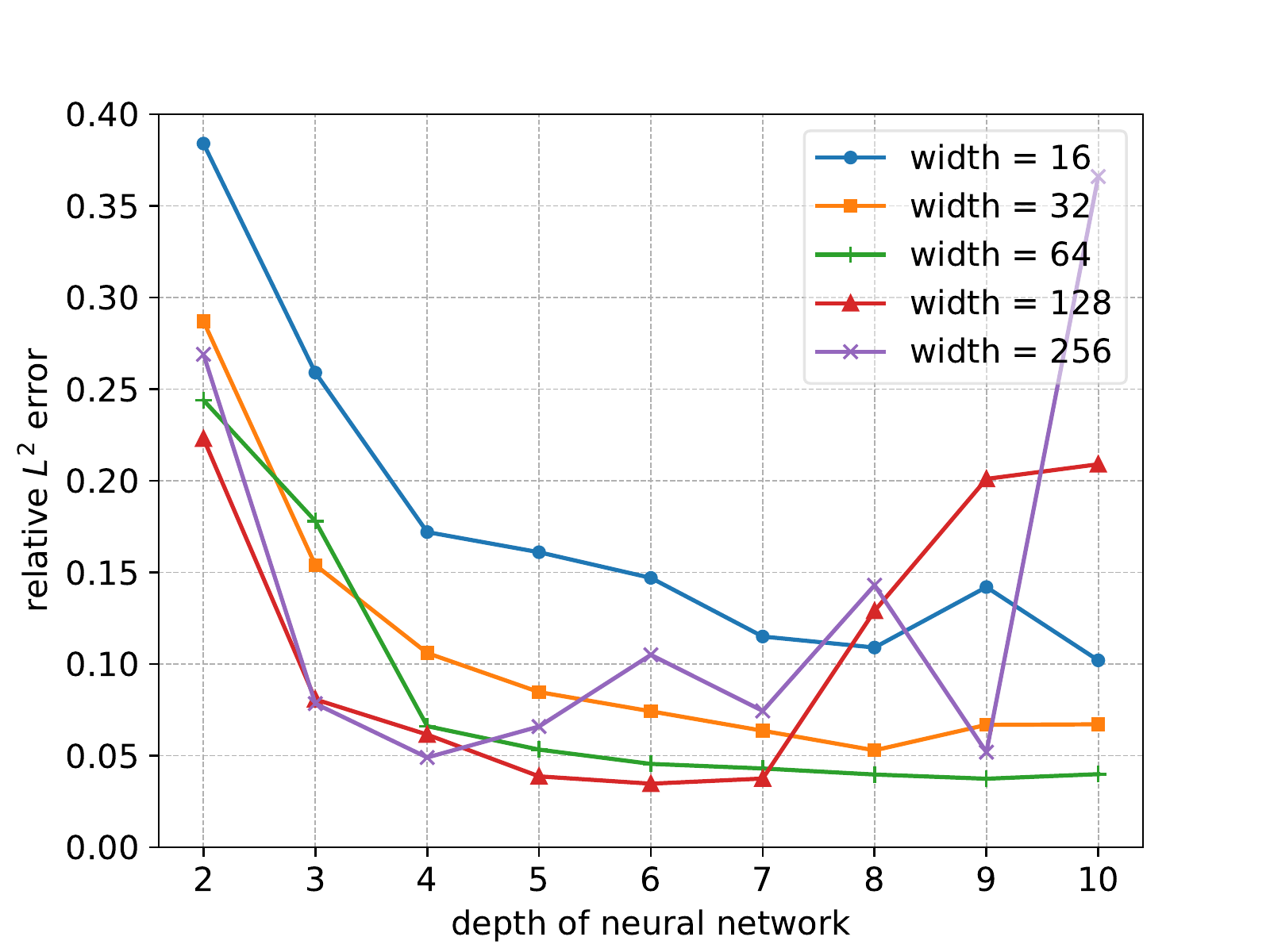}
    \end{minipage}
    }
    \subfigure[ReLU, $N = 5$]{
    \begin{minipage}[b]{0.46\textwidth}    
    \includegraphics[width=1\textwidth]{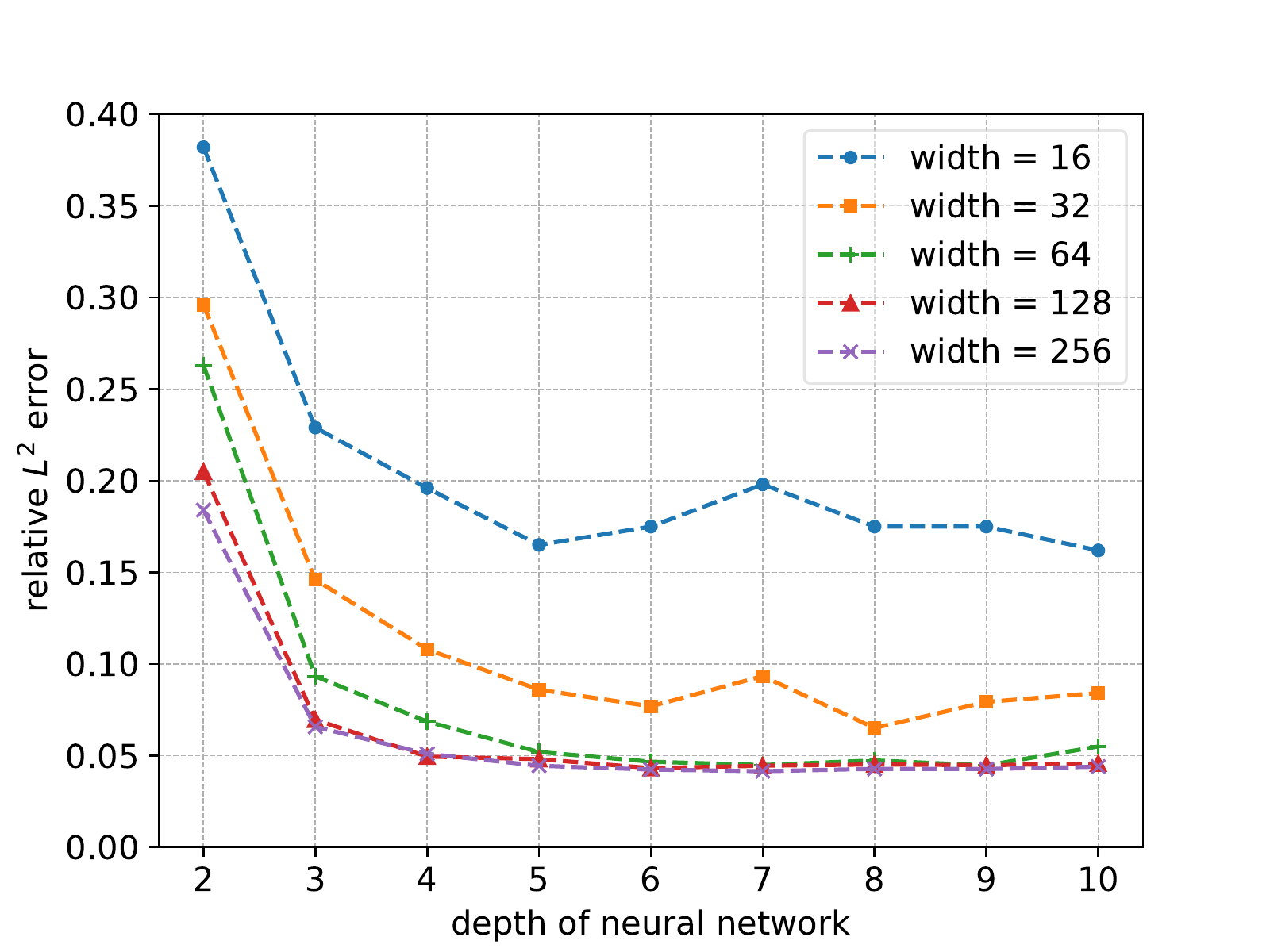}
    \end{minipage}
    }
    \bigskip
    \subfigure[$\tanh$, $N = 7$]{
    \begin{minipage}[b]{0.46\textwidth}
    \includegraphics[width=1\textwidth]{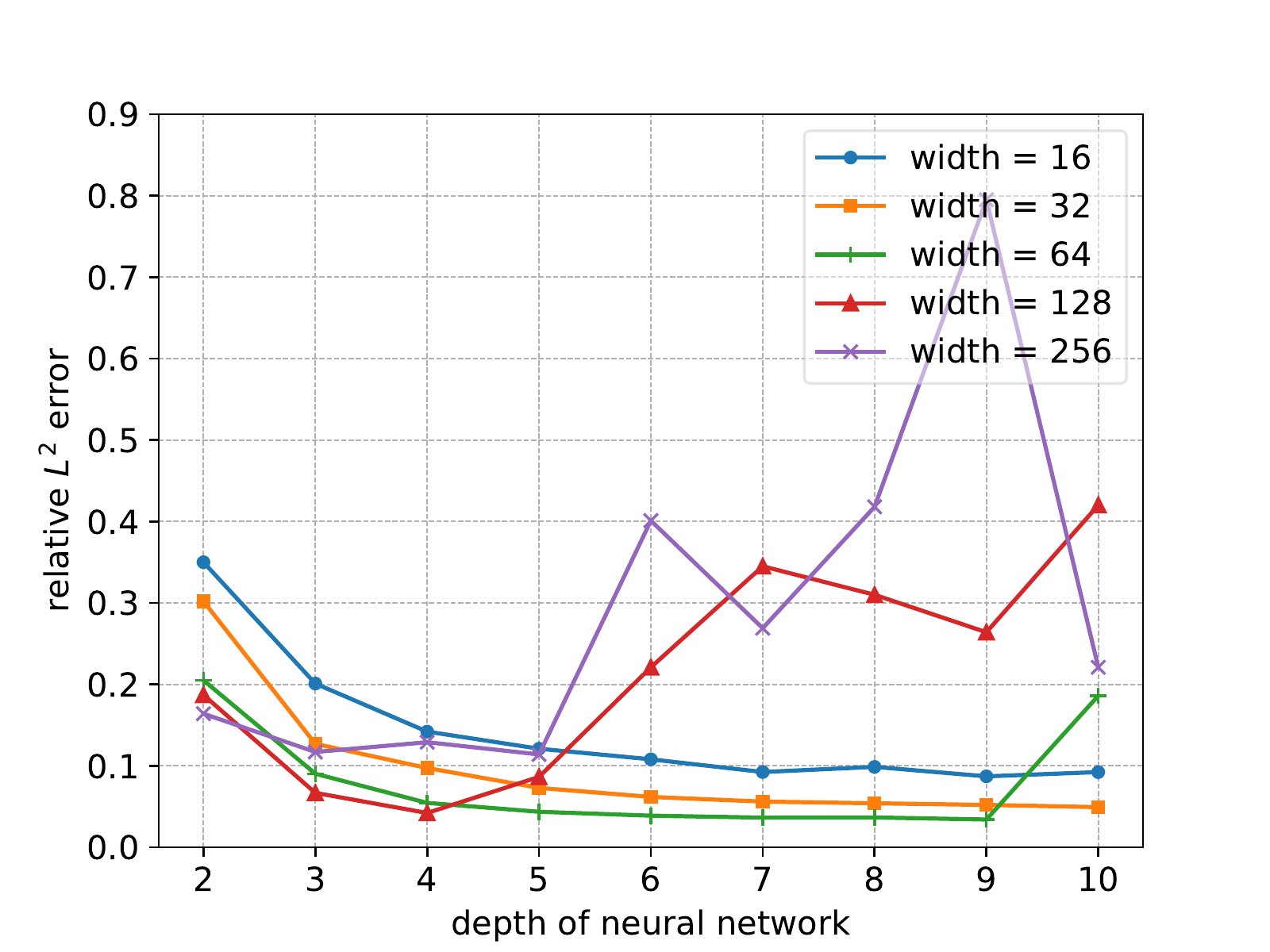}
    \end{minipage}
    }
    \subfigure[ReLU, $N = 7$]{
    \begin{minipage}[b]{0.46\textwidth}    
    \includegraphics[width=1\textwidth]{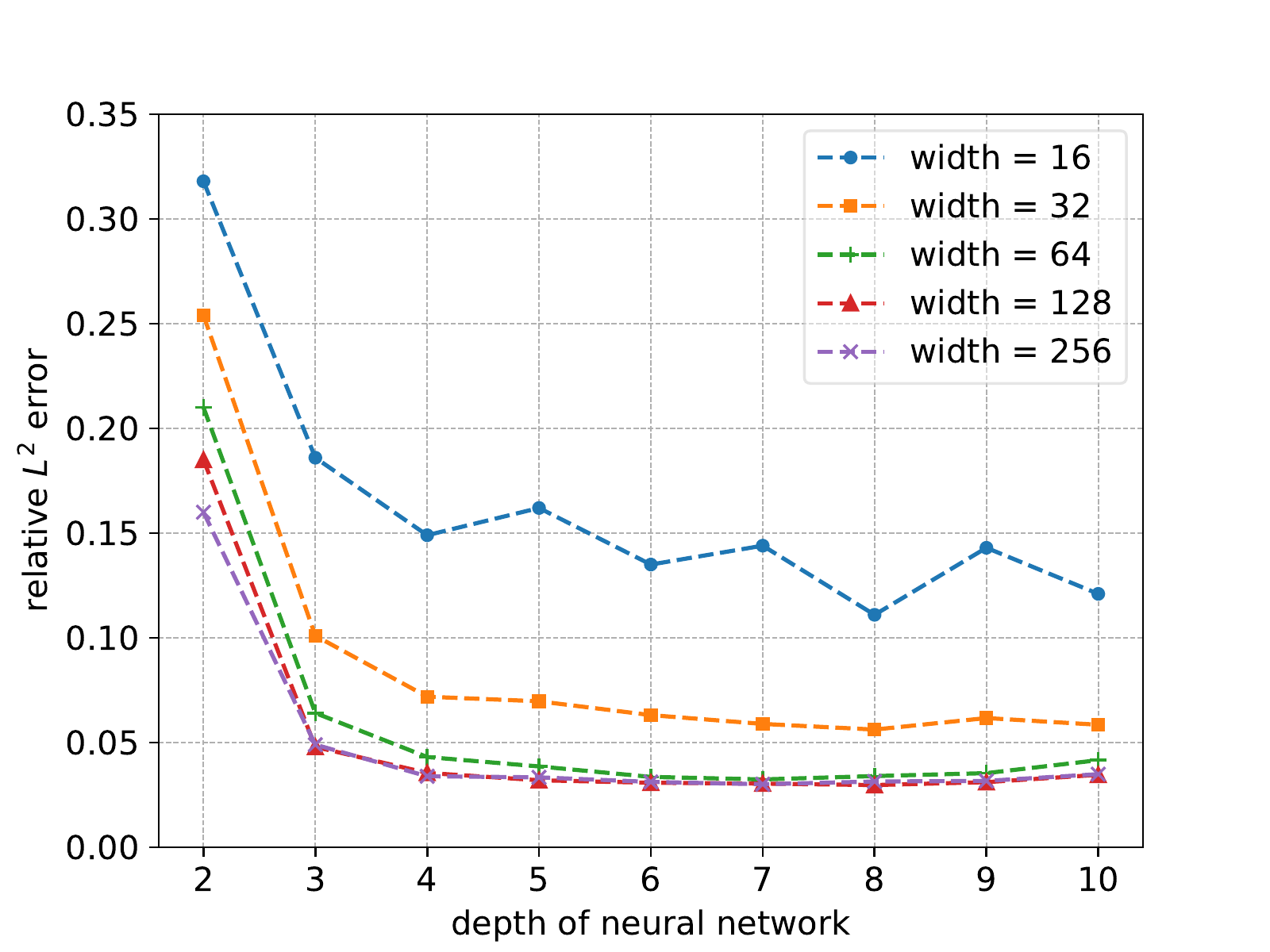}
    \end{minipage}
    }
    \bigskip
    \subfigure[$\tanh$, $N = 9$]{
    \begin{minipage}[b]{0.46\textwidth}
    \includegraphics[width=1\textwidth]{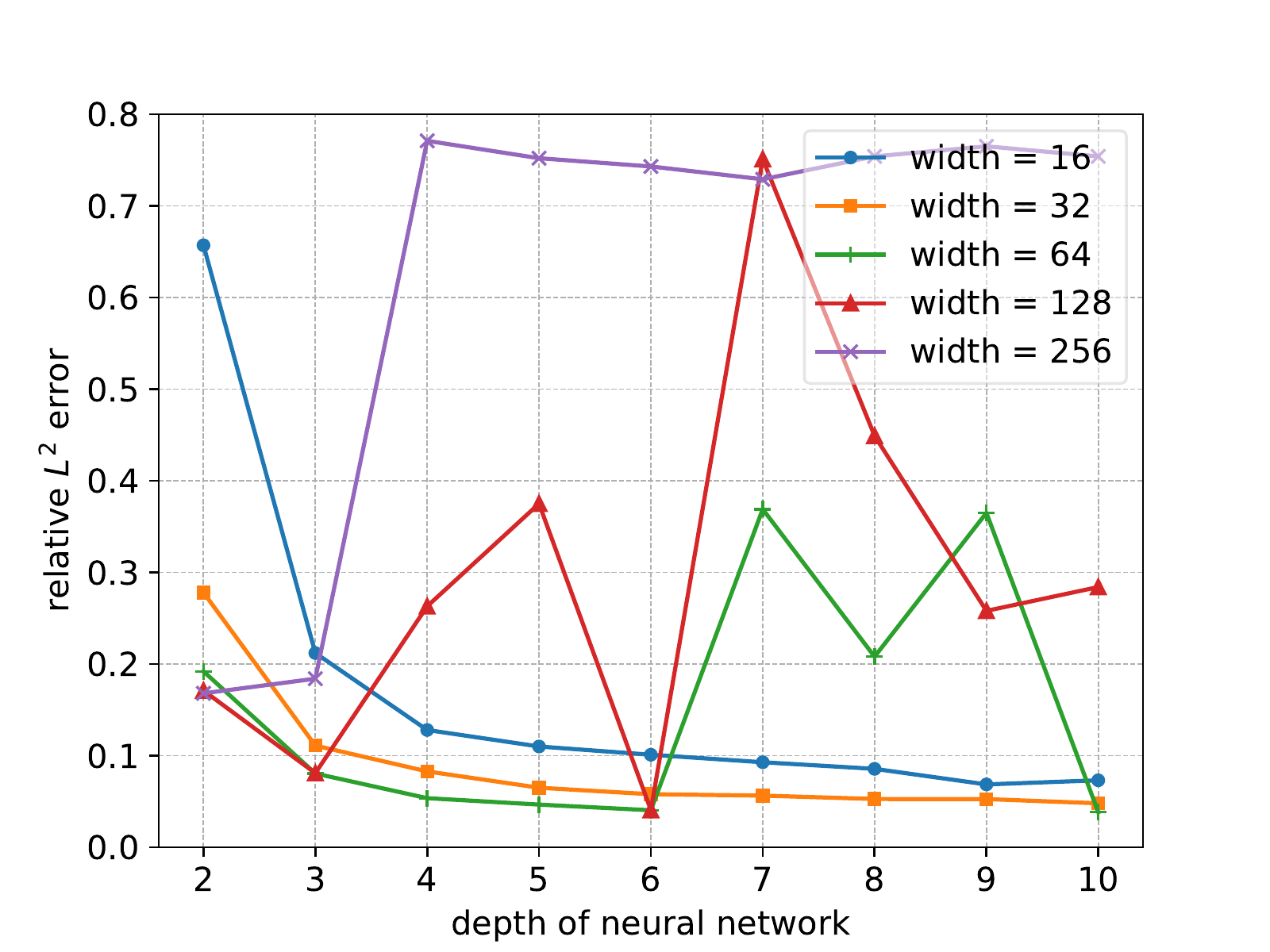}
    \end{minipage}
    }
    \subfigure[ReLU, $N = 9$]{
    \begin{minipage}[b]{0.46\textwidth}    
    \includegraphics[width=1\textwidth]{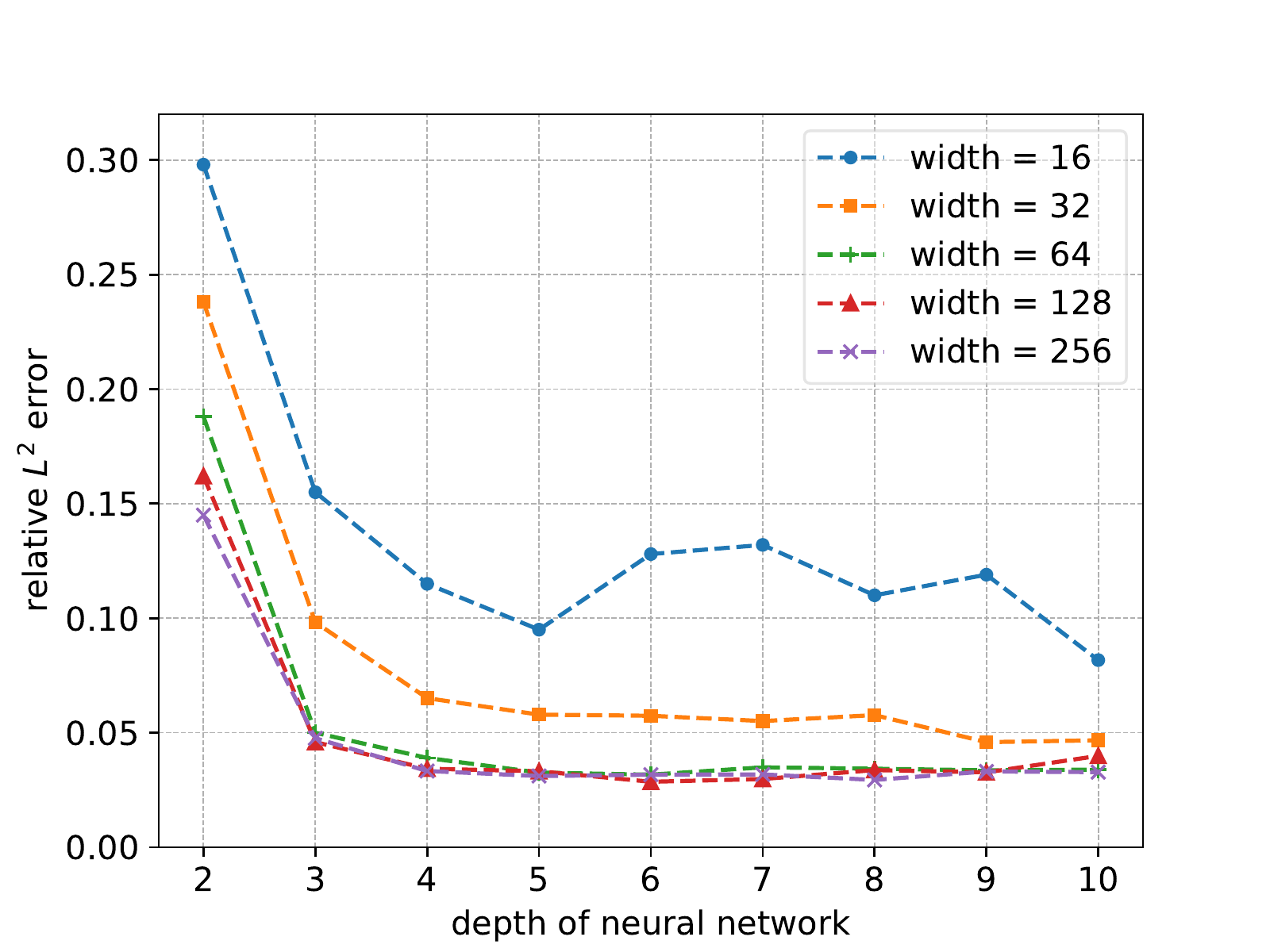}
    \end{minipage}
    }    
	\caption{Relative $L^2$ error in the training data with different depths and widths of the neural networks. Here, we use the first neural network architecture in Figure \ref{fig:schematic-nn}. The number of layers: $2,3,\cdots,10$; the number of nodes in the hidden layers: $16,32,\cdots,256$. Left: hyperbolic tangent activation function; right: ReLU activation function. The number of moments $N=5,7,9$.}
    \label{fig:training}
\end{figure}

\section{Numerical tests}\label{sec:numerical-test}

In this section, we show the performance of our ML closure model on a variety of benchmark tests, including problems with constant scattering and absorption coefficients, Gaussian source problems and two-material problems. The main focus of the tests is on the comparison of four moment closure models: (i) the symmetrizer based hyperbolic ML closure  \cite{huang2021hyperbolic} (termed as ``hyperbolic (symmetrizer)''); (ii) the hyperbolic ML closure with bounded eigenvalues (termed as ``hyperbolic (bound)''), see the neural network architechture in Figure \ref{fig:schematic-nn}; (iii) the hyperbolic ML closure with distinct eigenvalues (termed as ``hyperbolic (distinct)''), see the neural network architechture in Figure \ref{fig:schematic-nn-distinct}; (iv) the classical $P_N$ closure \cite{chandrasekhar1944radiative}.

In all the numerical examples, we take the physical domain to be the unit interval $[0,1]$ and periodic boundary conditions are imposed. To numerically solve the moment closure system, we apply the fifth-order finite difference WENO scheme \cite{jiang1996efficient} with a Lax–Friedrichs flux splitting for the spatial discretization, and the third-order strong-stability-preserving Runge-Kutta (RK) scheme \cite{shu1988efficient} for the time discretization. We take the grid number in space to be $N_x = 256$. The CFL condition is taken to be $\Delta t = 0.8\Delta x/c$ with $c$ being the maximum eigenvalues in all the grid points.

\begin{exam}[constant scattering and absorption coefficients]\label{ex:const}
	The setup of this example is the same as the data preparation. The scattering and absorption coefficients are taken to be constants over the domain. The initial condition is taken to be a truncated Fourier series, see the details in \cite{huang2021gradient}.
	
	In Figure \ref{fig:const-N6}, we show the numerical solutions of $m_0$ and $m_1$ with seven moments in the closure system ($N=6$) in the optically thin regime ($\sigma_s=\sigma_a=1$). It is observed that, at $t=0.5$ and $t=1$, all the hyperbolic ML moment closures agree well the RTE. As a comparison, the $P_N$ closure has large deviations from the exact solution at both $t=0.5$ and $t=1$.
	\begin{figure}
	    \centering
	    \subfigure[$m_0$ at $t=0.5$]{
	    \begin{minipage}[b]{0.46\textwidth}
	    \includegraphics[width=1\textwidth]{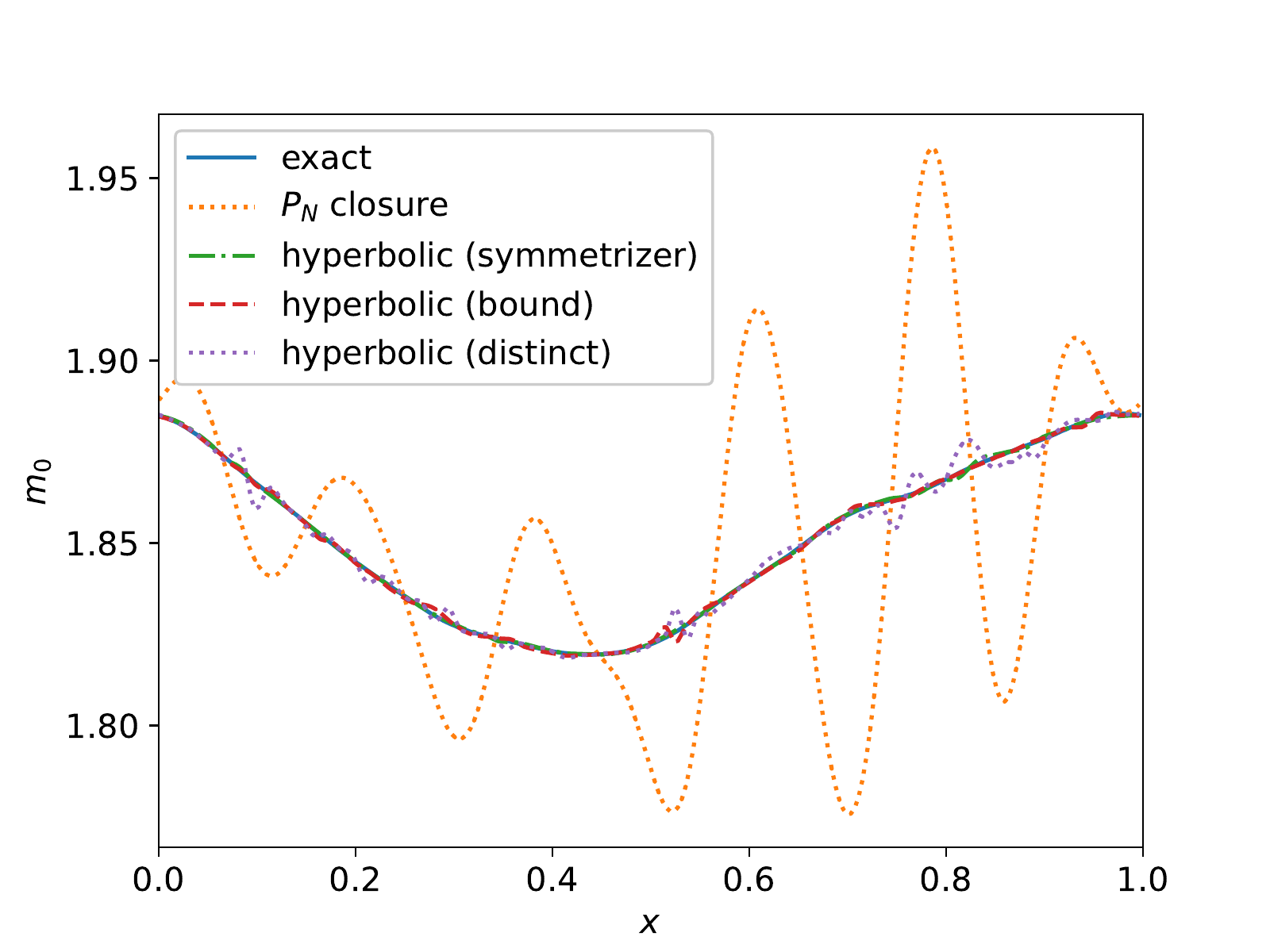}
	    \end{minipage}
	    }
	    \subfigure[$m_1$ at $t=0.5$]{
	    \begin{minipage}[b]{0.46\textwidth}    
	    \includegraphics[width=1\textwidth]{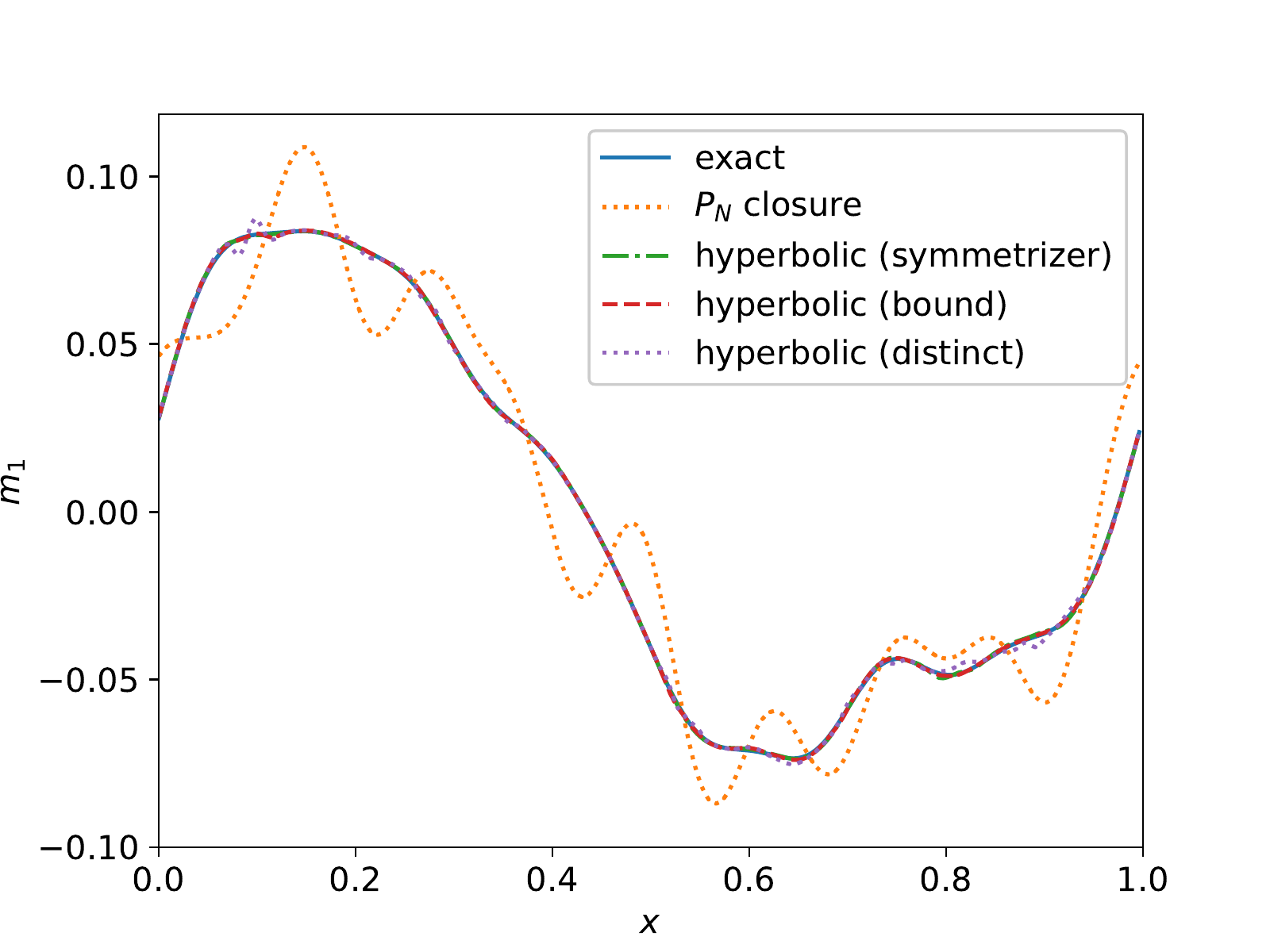}
	    \end{minipage}
	    }
	    \bigskip
	    \subfigure[$m_0$ at $t=1$]{
	    \begin{minipage}[b]{0.46\textwidth}
	    \includegraphics[width=1\textwidth]{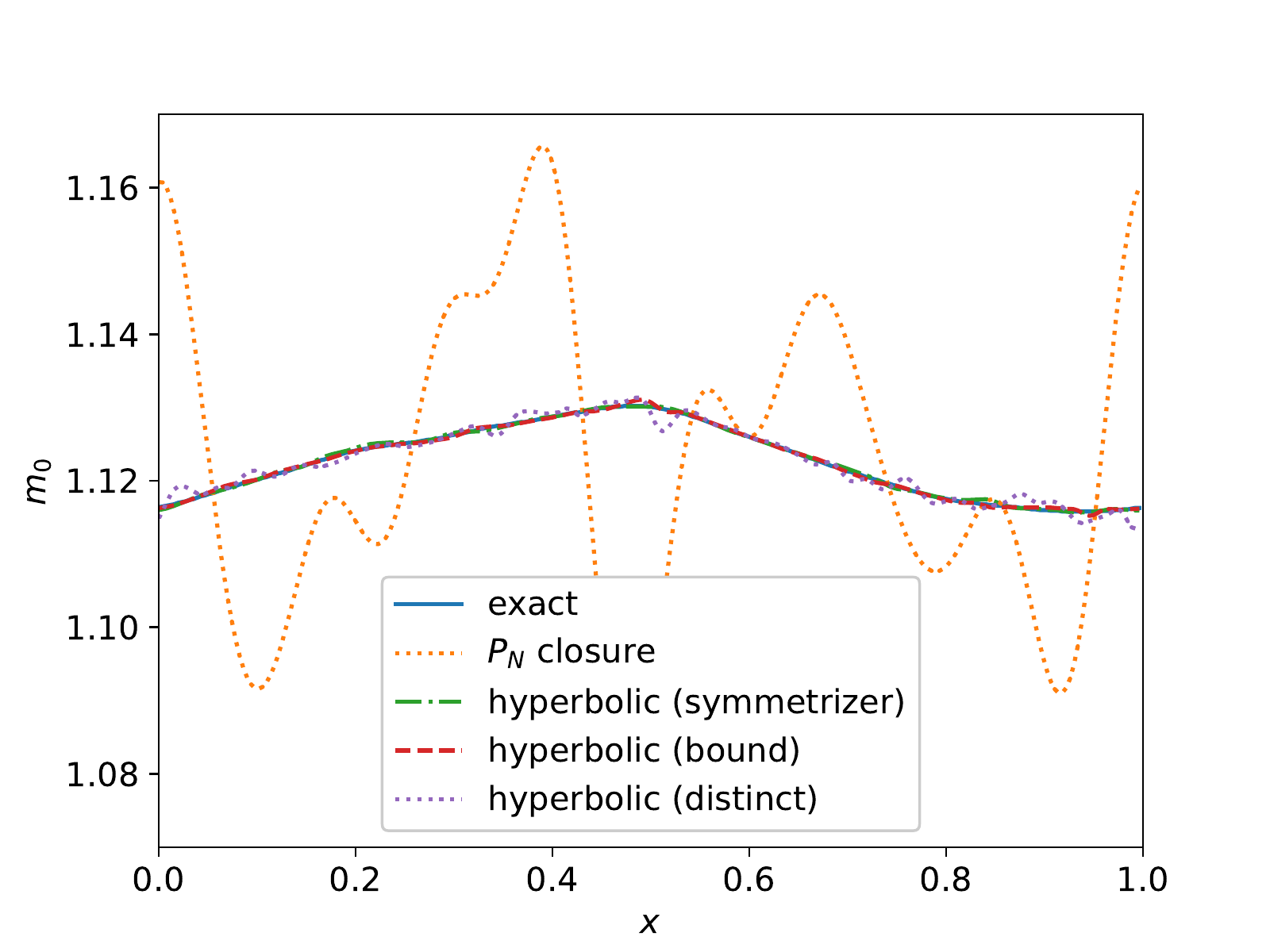}
	    \end{minipage}
	    }
	    \subfigure[$m_1$ at $t=1$]{
	    \begin{minipage}[b]{0.46\textwidth}    
	    \includegraphics[width=1\textwidth]{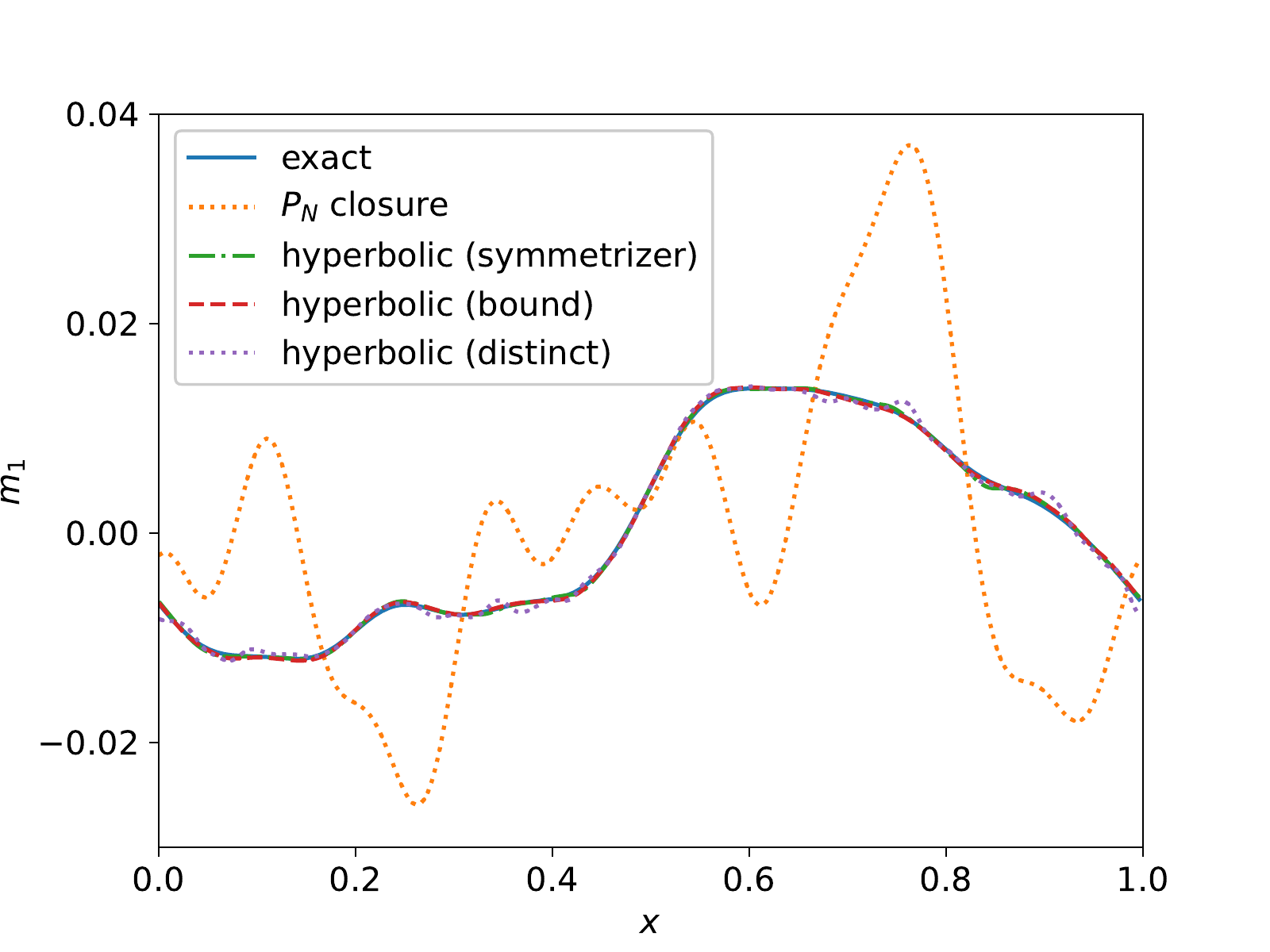}
	    \end{minipage}
	    }
		\caption{Example \ref{ex:const}: constant scattering and absorption coefficients, optically thin regime ($\sigma_s=\sigma_a=1$), $N=6$, $t=0.5$ and $t=1$.}
	    \label{fig:const-N6}
	\end{figure}

	In Figure \ref{fig:const-error-scatter}, we display the log-log scatter plots of the relative $L^2$ error versus the scattering coefficient for $N = 6$ at $t=1$. We observe that, all the hyperbolic ML closures have better accuracy than the $P_N$ closure. Moreover, in the optically thick regime, all the closures perform well. It is also observed that the ML hyperbolic closure model with bounded eigenvalues generally has better accuracy than the other two ML closures.
	\begin{figure}
	    \centering
	    \subfigure[$m_0$ at $t=1$]{
	    \begin{minipage}[b]{0.46\textwidth}
	    \includegraphics[width=1\textwidth]{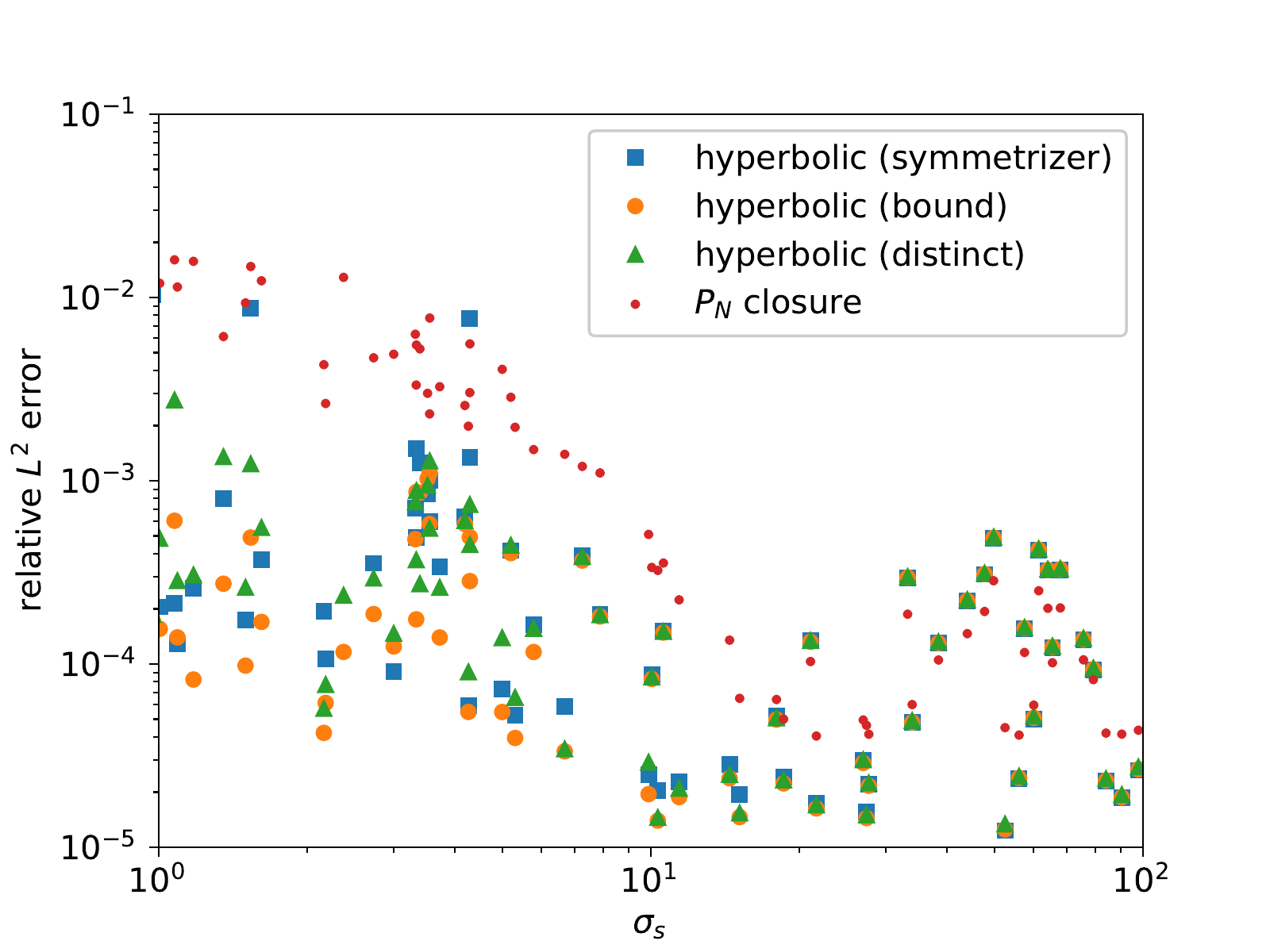}
	    \end{minipage}
	    }
	    \subfigure[$m_1$ at $t=1$]{
	    \begin{minipage}[b]{0.46\textwidth}    
	    \includegraphics[width=1\textwidth]{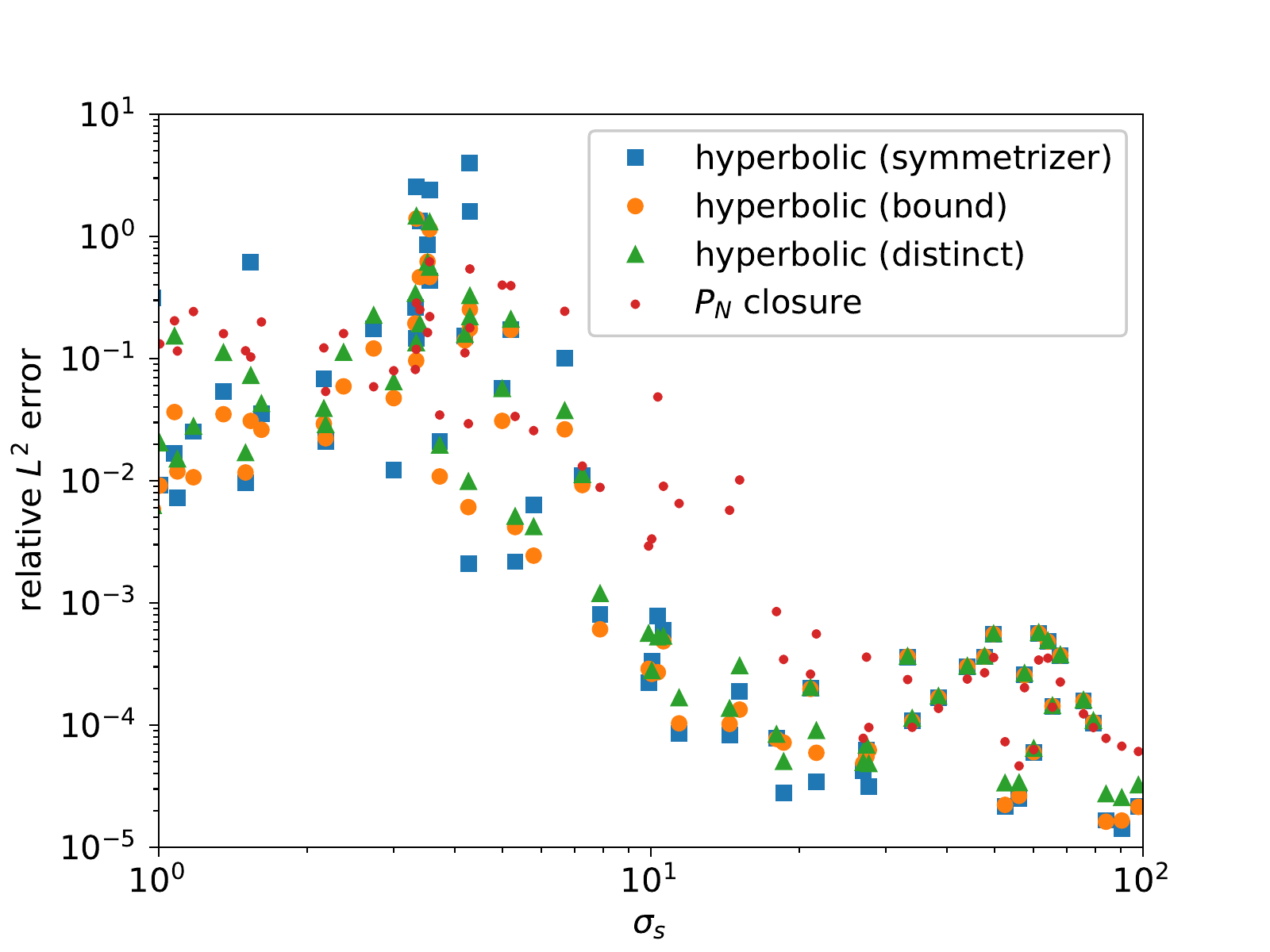}
	    \end{minipage}
	    }	    
		\caption{Example \ref{ex:const}: constant scattering and absorption coefficients, $N=6$ and $t=1$.}
	    \label{fig:const-error-scatter}
	\end{figure}

	In Figure \ref{fig:const-long-time} (a), we present the $L^2$ errors as a function of time for the solutions of the three hyperbolic ML moment closure systems and the solution generated by the RTE in the optically thin regime ($\sigma_s=\sigma_a=1$). We observe that the three hyperbolic closures generate good predictions in the long time simulation up to $t=10$. Moreover, the eigenvalue based ML hyperbolic closure models are more accurate than the symmetrizer based model in \cite{huang2021hyperbolic}. This is probably due to the fact that there is only 4 degrees of freedom in \cite{huang2021hyperbolic}. In contrast, the current eigenvalue based approach makes full use of all the degrees of freedom, which results in better approximation results.
	
	We also display the maximum eigenvalues of the three hyperbolic ML closure models at all the grid points during the time evolution in Figure \ref{fig:const-long-time} (b). It is observed that the eigenvalues are always real numbers, which validates the hyperbolicity feature of the closure models. Moreover, the ML closure with bounded eigenvalues always has physical characteristic speeds bounded by 1. For the closure with distinct eigenvalues, it is interesting to see that the model has the physical characteristic speeds for most of the time although this constrain is not enforced explicitly. The largest eigenvalues of this model during the time evolution is 1.12, which is slightly larger than 1. As a comparision, the symmetrizer based closure in \cite{huang2021hyperbolic} usually violates the physical characteristic speeds, which can be as large as 5.05. The physical characteristic speed of the current ML closure model results in larger time step size in the numerical simulations and thus less computational cost. Moreover, to determine the time step size in the symmetrized based ML model \cite{huang2021hyperbolic} during the time evolution based on the CFL condition, it is required to first compute the coefficient matrix for the closure models and then compute the maximum eigenvalues, which results in additional computational cost. Therefore, the current two ML closure models are better than the symmetrizer based model in \cite{huang2021hyperbolic} in terms of the efficiency.
	\begin{figure}
	    \centering
	    \subfigure[$L^2$ errors of $m_0$ and $m_1$]{
	    \begin{minipage}[b]{0.46\textwidth}
	    \includegraphics[width=1\textwidth]{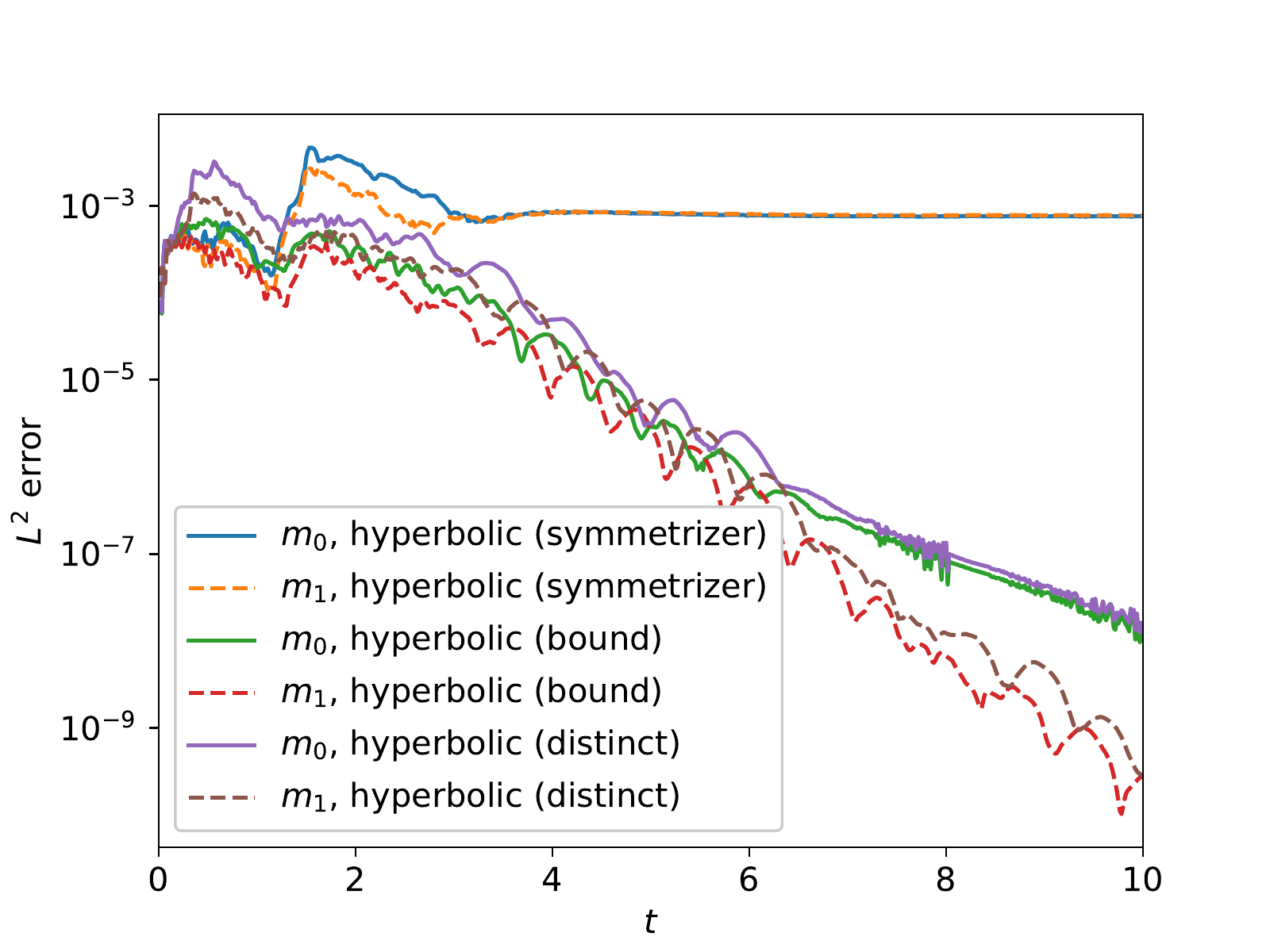}
	    \end{minipage}
	    }
	    \subfigure[maximum eigenvalues]{
	    \begin{minipage}[b]{0.46\textwidth}    
	    \includegraphics[width=1\textwidth]{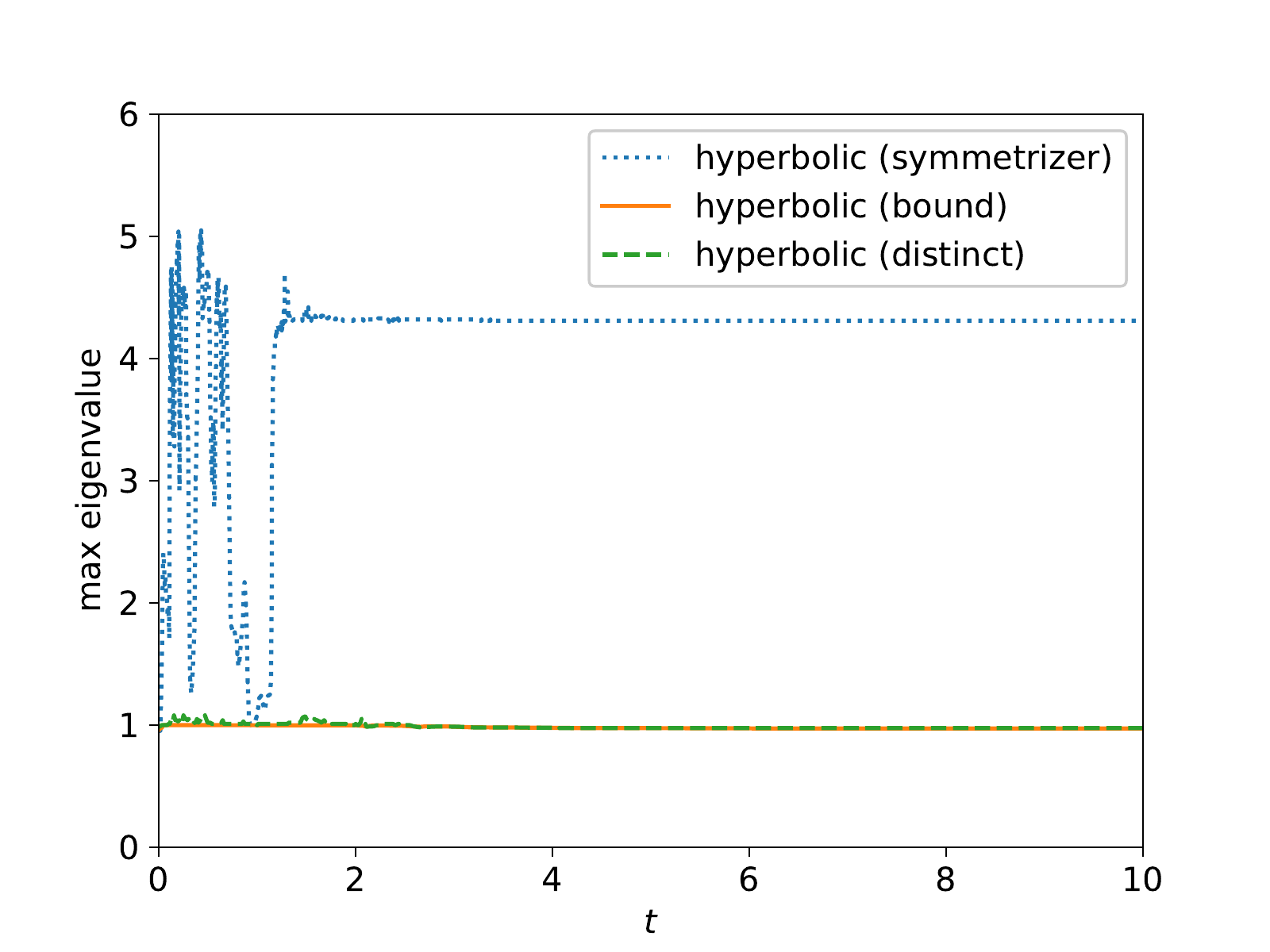}
	    \end{minipage}
	    }	    
		\caption{Example \ref{ex:const}: constant scattering and absorption coefficients, $N=6$.}
	    \label{fig:const-long-time}
	\end{figure}

Next, we discuss the instability issue of the hyperbolic ML closure with bounded eigenvalues. The two eigenvalues get too close for small numbers of moments ($N=3,4,5$), which behaves as if the system is weakly hyperbolic. We simulate the ML closure model with bounded eigenvalues with $N=3$ and $N=5$ in the optically thin regime ($\sigma_s=\sigma_a=1$). The numerical solutions blow up at $t=0.18$ for $N=3$ and $t=1.25$ for $N=5$, see Figure \ref{fig:const-distinct-instability} (b) and Figure \ref{fig:const-distinct-instability} (d) for the $L^{\infty}$ norm of the numerical solutions during the time evolution. As a comparison, the solution stays stable for $N=7$, see Figure \ref{fig:const-distinct-instability} (f).  To investigate this phenomenon in detail, in each time step, we compute the eigenvalues at each grid point, and compute the number of grid points with two eigenvalues which are closer than a given thresholds $\varepsilon$. The number of grid points with close eigenvalues with different thresholds in the time evolution are presented in Figure \ref{fig:const-distinct-instability} (a) and Figure \ref{fig:const-distinct-instability} (c). From the figure, we observe that there are no grid points with close eigenvalues in the beginning. As time evolves, more grid points with non-distinct eigenvalues appear for $N=3$ and $N=5$. For $N=7$, there only exists a couple of grid points with the thresholds $10^{-3}$ and $10^{-4}$ and no grid points with the thresholds $10^{-5}$ and $10^{-6}$. This does not affect the numerical stability of the simulation. 
	\begin{figure}
	    \centering
	    \subfigure[number of grid points with close eigenvalues, $N=3$]{
	    \begin{minipage}[b]{0.46\textwidth}
	    \includegraphics[width=1\textwidth]{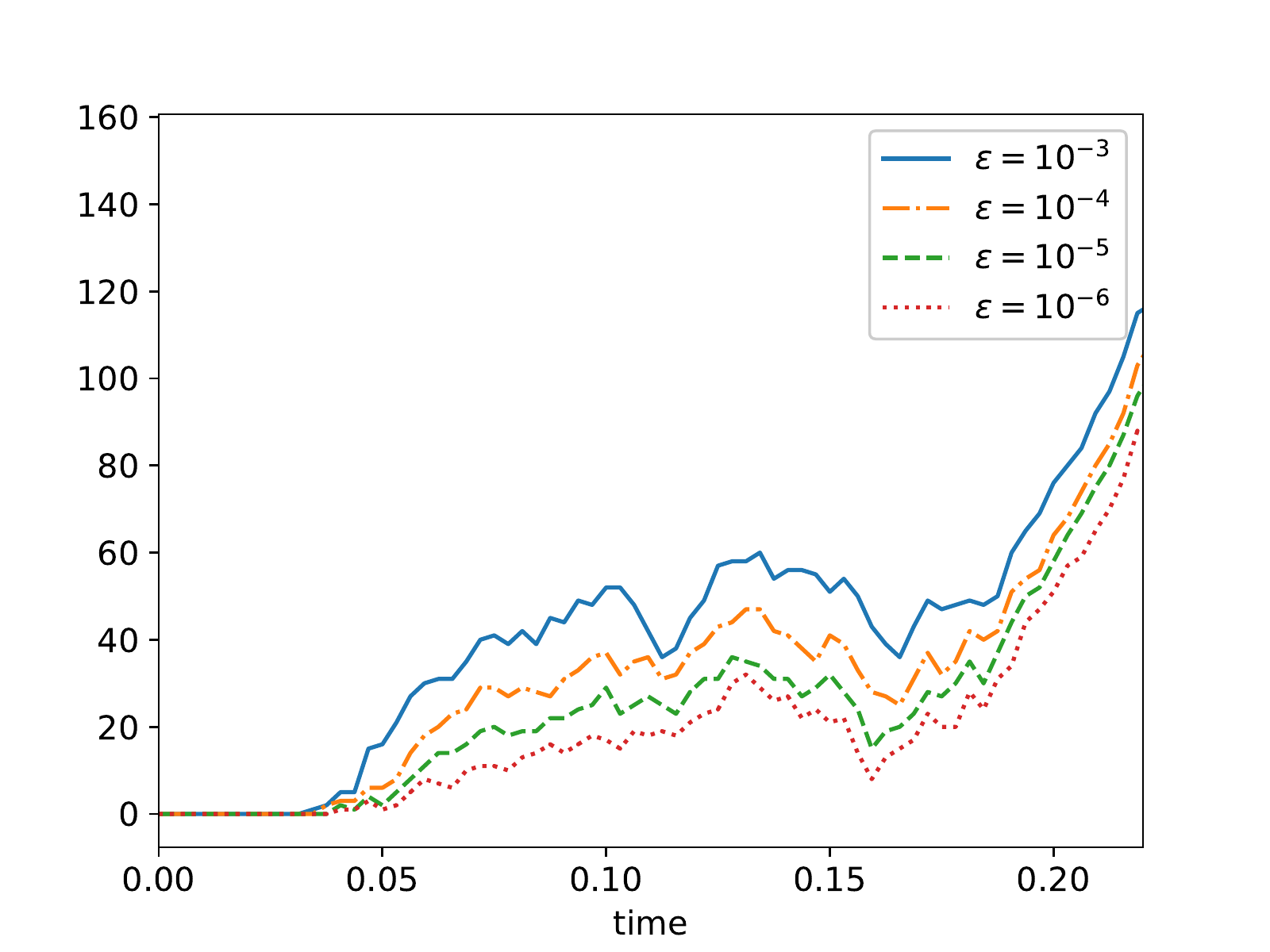}
	    \end{minipage}
	    }
	    \subfigure[$L^{\infty}$ norm of numerical solution, $N=3$]{
	    \begin{minipage}[b]{0.46\textwidth}    
	    \includegraphics[width=1\textwidth]{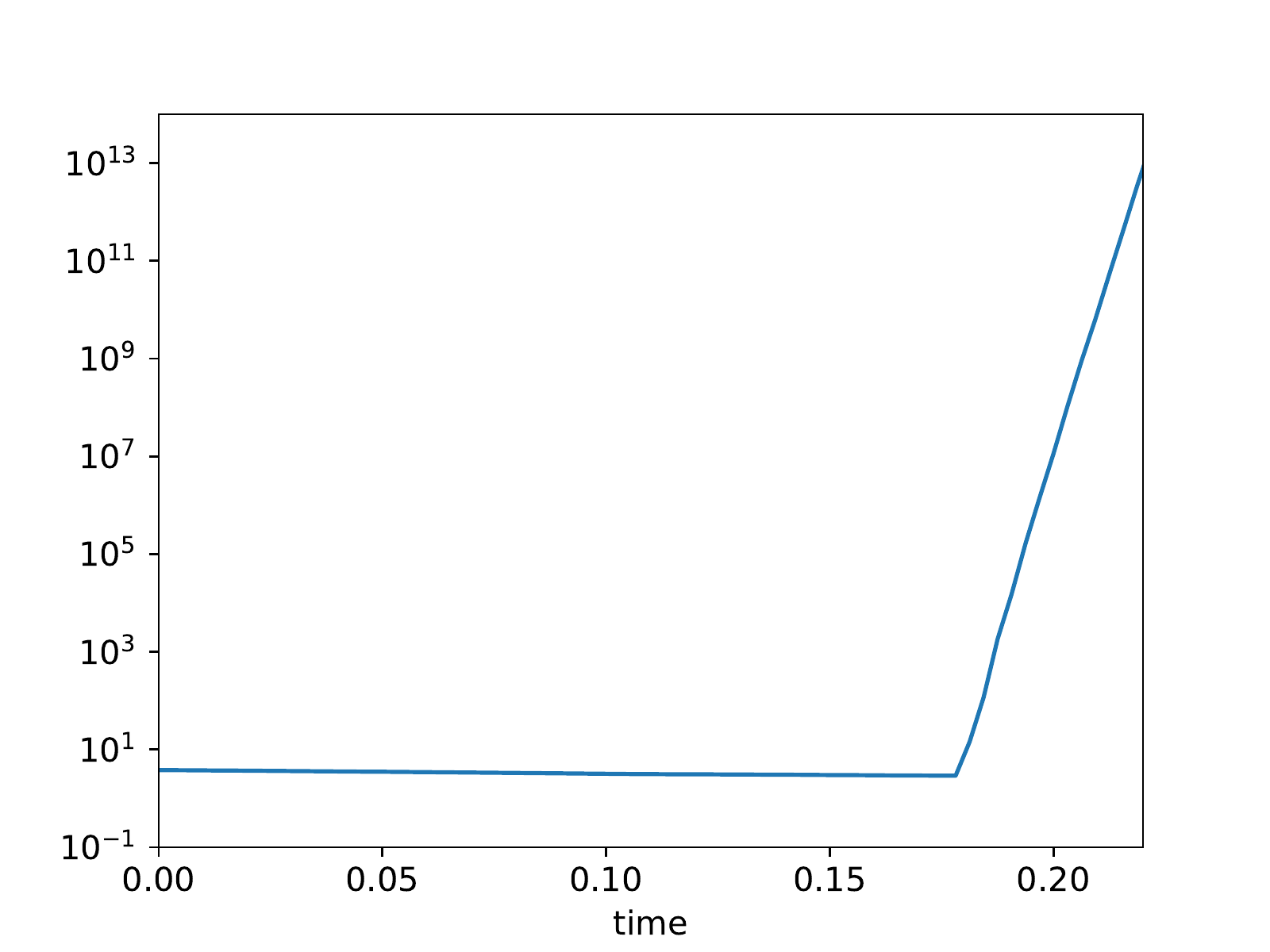}
	    \end{minipage}
	    }
	    \bigskip
	    \subfigure[number of grid points with close eigenvalues, $N=5$]{
	    \begin{minipage}[b]{0.46\textwidth}
	    \includegraphics[width=1\textwidth]{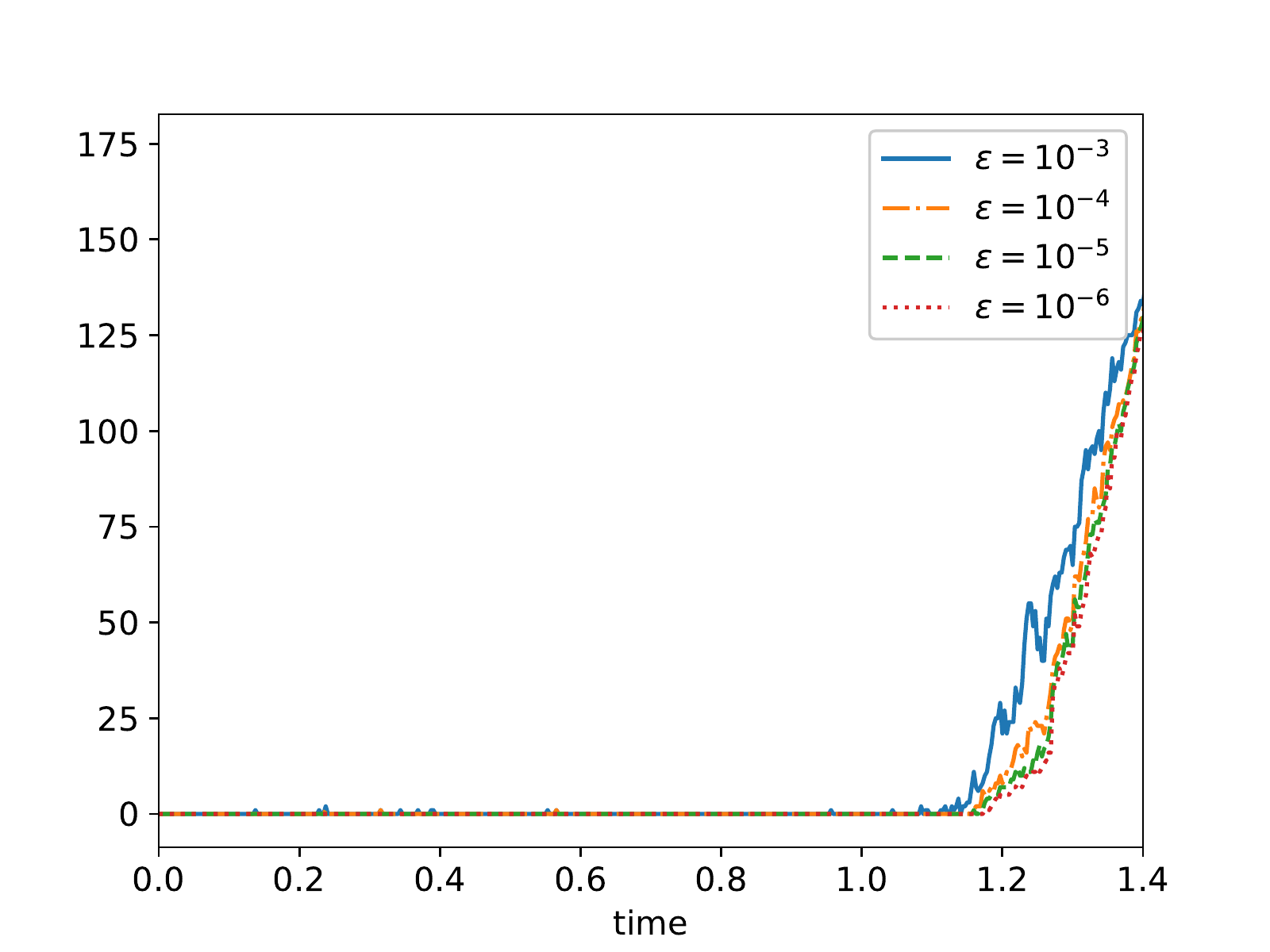}
	    \end{minipage}
	    }
	    \subfigure[$L^{\infty}$ norm of numerical solution, $N=5$]{
	    \begin{minipage}[b]{0.46\textwidth}    
	    \includegraphics[width=1\textwidth]{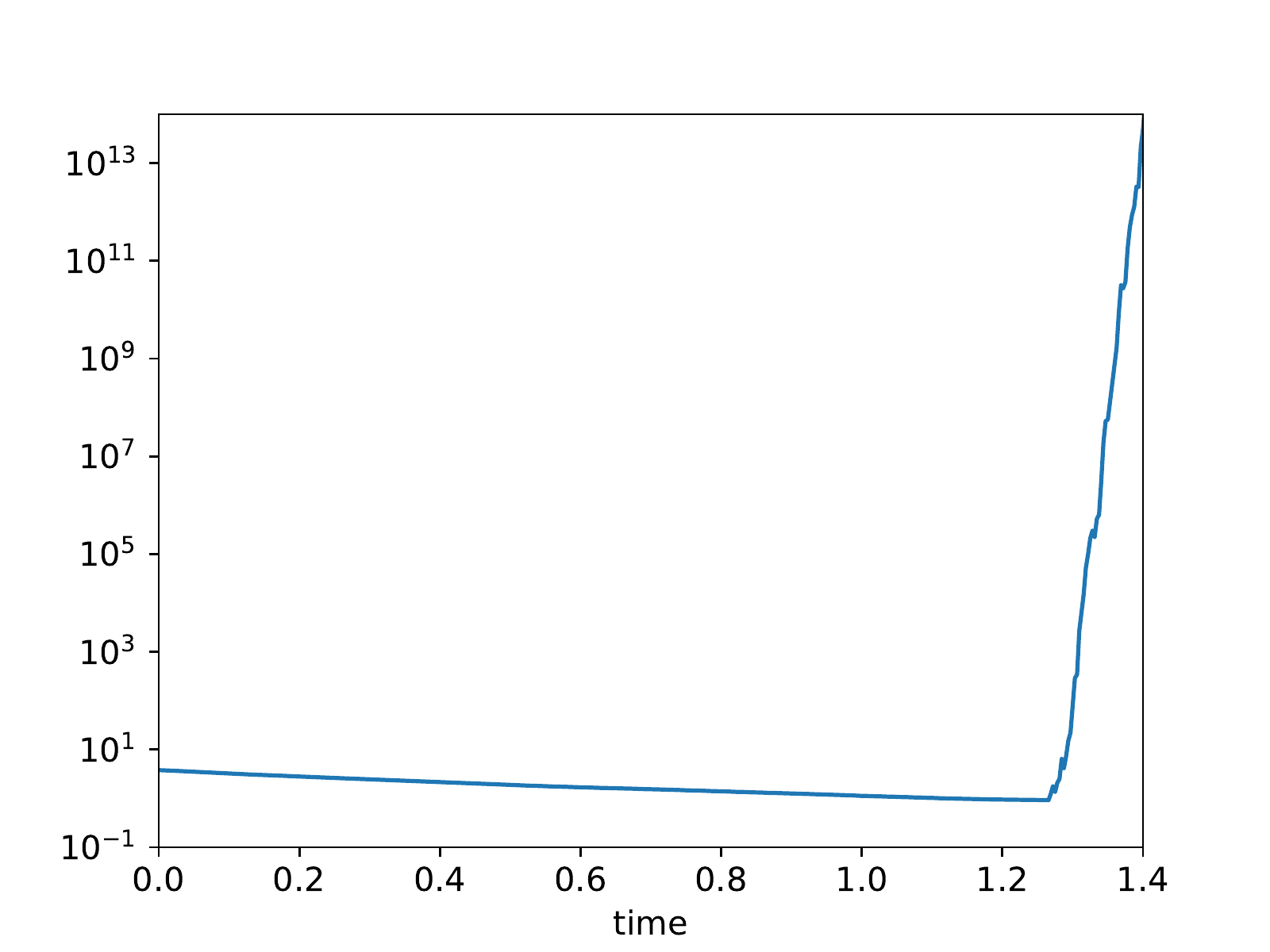}
	    \end{minipage}
	    }
	    \bigskip
	    \subfigure[number of grid points with close eigenvalues, $N=7$]{
	    \begin{minipage}[b]{0.46\textwidth}
	    \includegraphics[width=1\textwidth]{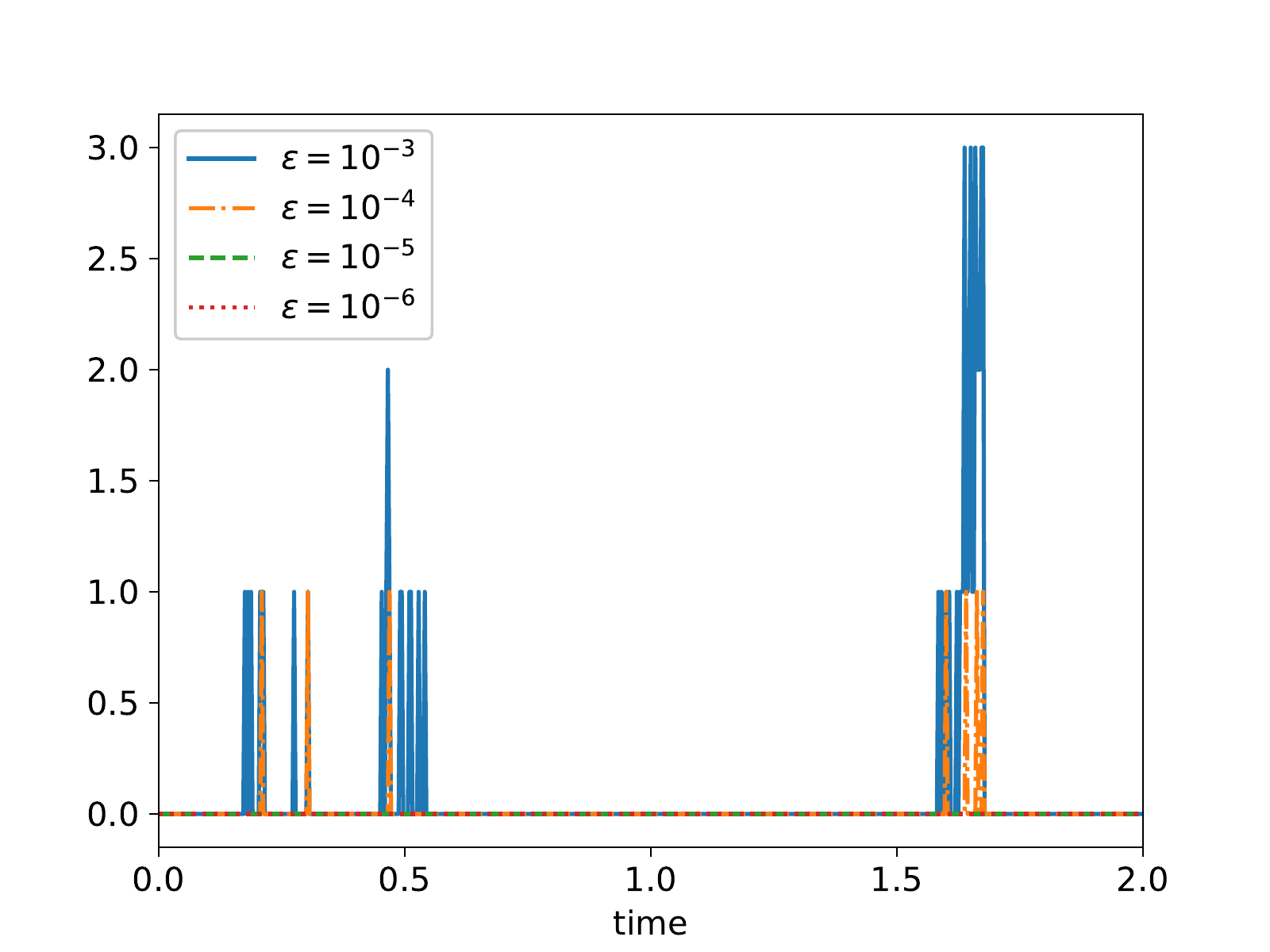}
	    \end{minipage}
	    }
	    \subfigure[$L^{\infty}$ norm of numerical solution, $N=7$]{
	    \begin{minipage}[b]{0.46\textwidth}    
	    \includegraphics[width=1\textwidth]{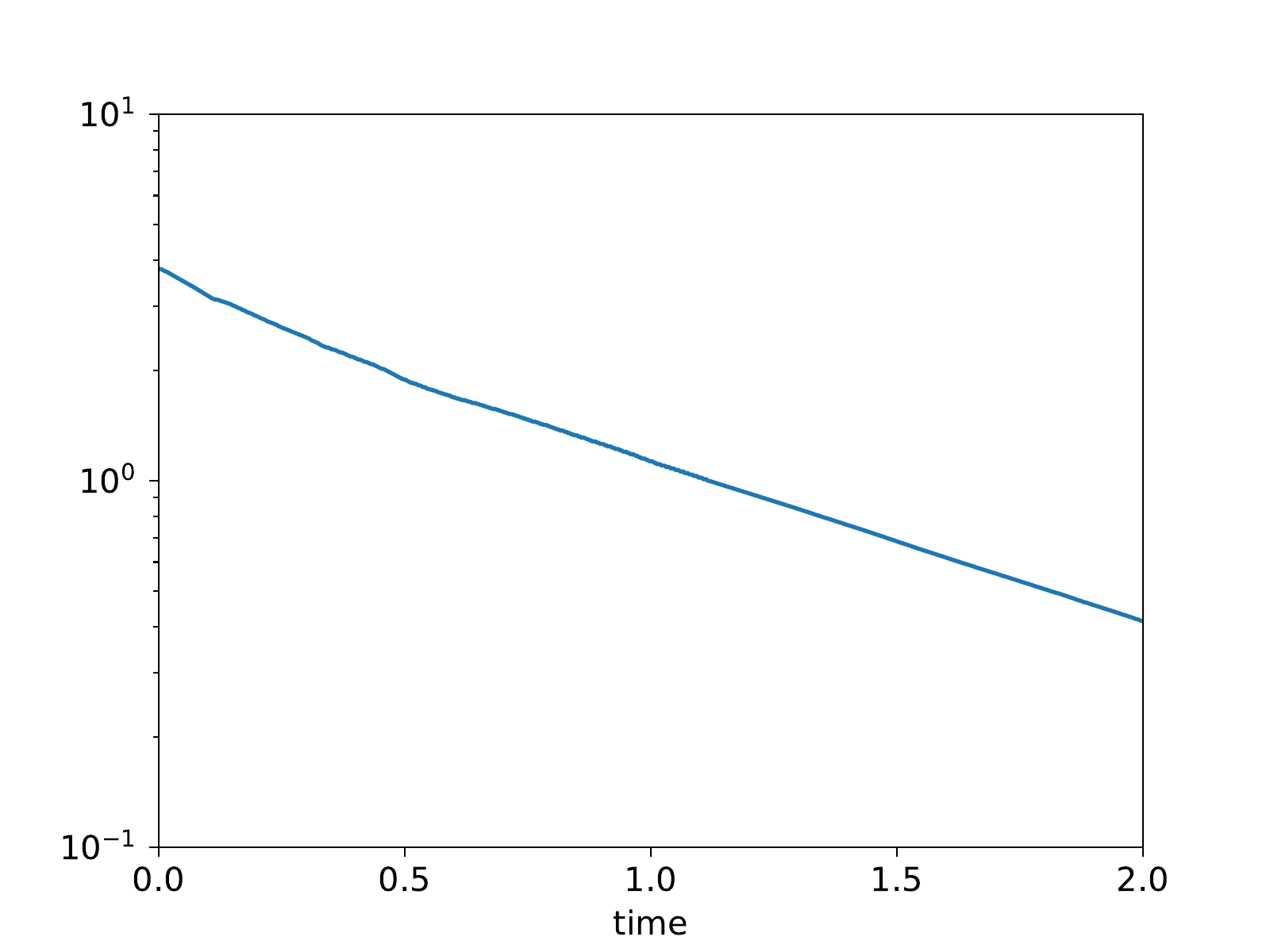}
	    \end{minipage}
	    }	    
		\caption{Example \ref{ex:const}: constant scattering and absorption coefficients.  Here, we use the first neural network architecture in Figure \ref{fig:schematic-nn}. The number of grid points with imaginary eigenvalues and $L^{\infty}$ norm of numerical solutions during the time evolution in the optically thin regime ($\sigma_s=\sigma_a=1$) with $N=3,5,7$.}
	    \label{fig:const-distinct-instability}
	\end{figure}	

We also observe numerical instability in the hyperbolic ML closure model with distinct eigenvalues for some parameters. The model is numerically stable for $N\ge 6$ but numerically unstable for $N=3,4,$ and $5$  in the optically thin regime. We show the distributions of the training data and the numerical solution during the time evolution of the ML closure model with $N=3$ and $N=6$ in Figure \ref{fig:const-training-data-instability}. At each time step, there is a curve composed of 256 points and the plots represent the evolution of the closed curve where the color denotes the evolution time. It can be seen for the $N=3$ case, that as the  numerical solutions is approaching the steady state, it suddenly undergoes a dramatic change in the dynamics of the solution and then proceeds to run out side of the range of the training data.  This in contrast to the case $N=6$ which is plotted in Figure \ref{fig:const-training-data-instability}(b), which clearly shows relaxation to the steady state.  In the plots, the color bar represents the time of the solution.
	\begin{figure}
	    \centering
	    \subfigure[$m_2/m_0$ vs $m_1/m_0$ with $N=3$]{
	    \begin{minipage}[b]{0.48\textwidth}
	    \includegraphics[width=1\textwidth]{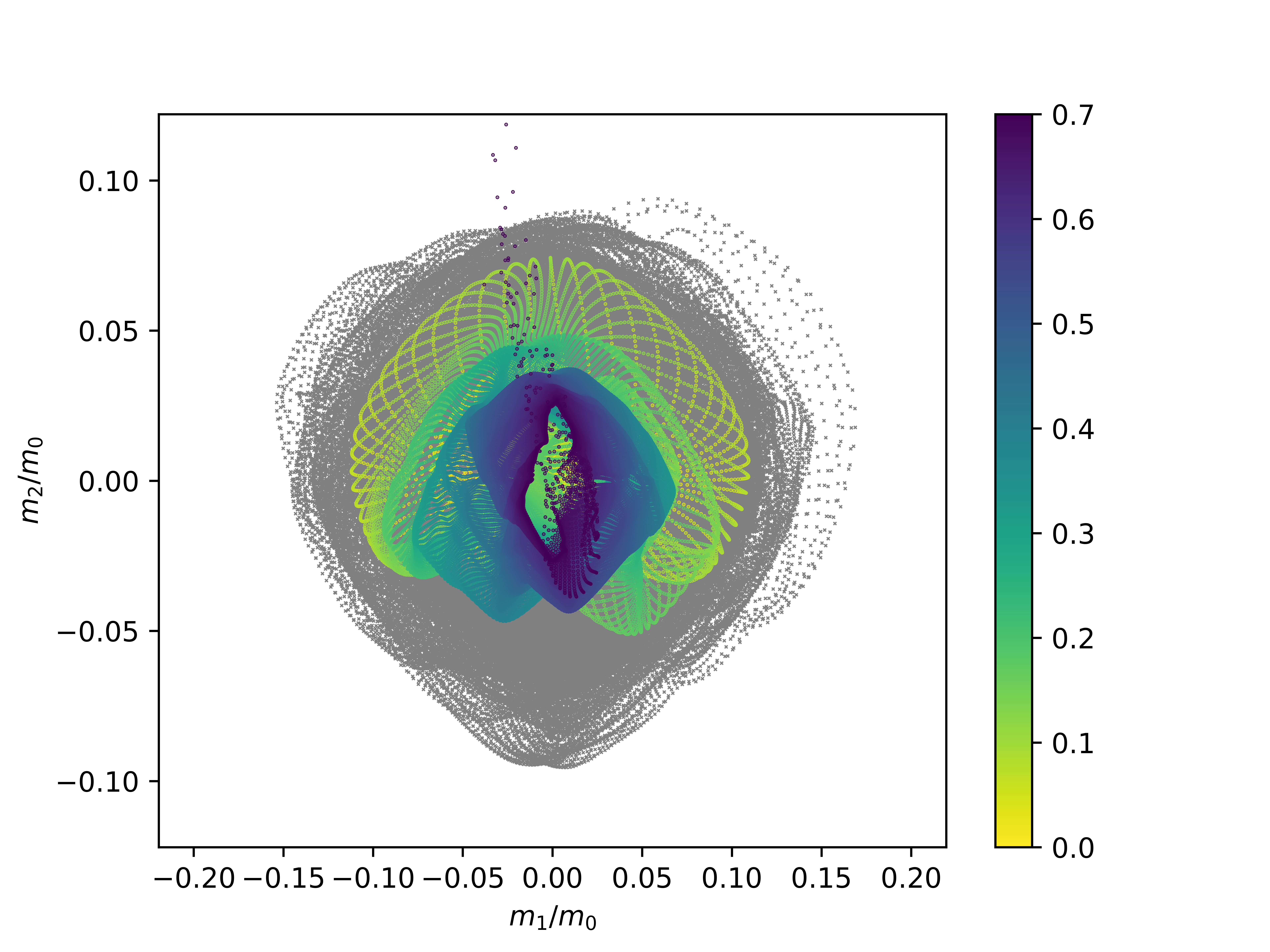}
	    \end{minipage}
	    }
	    \subfigure[$m_2/m_0$ vs $m_1/m_0$ with $N=6$]{
	    \begin{minipage}[b]{0.48\textwidth}    
	    \includegraphics[width=1\textwidth]{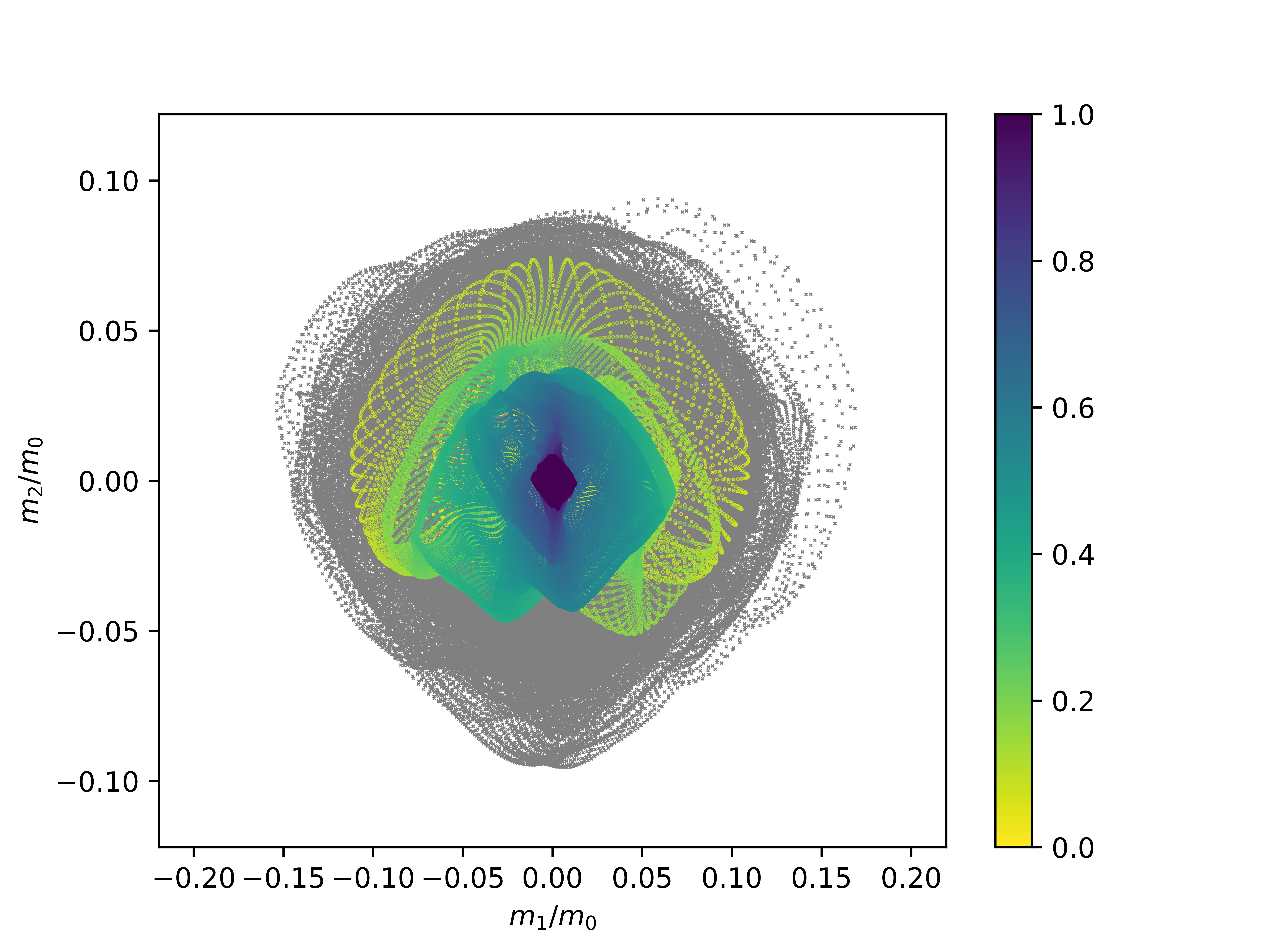}
	    \end{minipage}
	    }	    
		\caption{Example \ref{ex:const}: constant scattering and absorption coefficients, $m_2/m_0$ vs $m_1/m_0$ with $N=3$ and $N=6$.  Here, we use the second neural network architecture in Figure \ref{fig:schematic-nn-distinct}. At each time step, there is a curve composed of 256 points and the plots represent the evolution of the closed curve where the color denotes the evolution time. For $N=3$, as the numerical solutions is approaching the steady state, it suddenly has a dramatic change in dynamics of the solution and then proceeds to run outside of the range of the training data. This in contrast to the case $N=6$ which is plotted, which clearly shows relaxation to the steady state. The grey points denote the training data and the colorful points denote the numerical solutions solving from the ML moment closure system.}
	    \label{fig:const-training-data-instability}
	\end{figure}

To investigate the instability of the ML closure model with distinct eigenvalues further, we check the linear stability of the system numerically. We denote the source term of the closure model in \eqref{eq:closure-model} by $S=( -\sigma_a m_0, -(\sigma_s+\sigma_a) m_{1}, \cdots, -(\sigma_s+\sigma_a) m_{N})$. Then, the Jacobian matrix of the source term is $S_U=\textrm{diag}(-\sigma_a, -(\sigma_s+\sigma_a), \cdots, -(\sigma_s+\sigma_a))$. The model is called linearly stable if all the eigenvalues of $(i\xi A + S_U)$ have non-positive real part for any $\xi\in\mathbb{R}$. Here, $A$ is the coefficient matrix of the closure system given in \eqref{eq:coefficient-matrix} and $i$ is the imaginary unit. Linear stability is essential for the closure system to generate stable results in long time simulations \cite{yong2001basic}. The symmetrizer based hyperbolic ML moment closure model in \cite{huang2021hyperbolic}  satisfies this stability condition. 
We test for linear stability numerically, by taking $\xi = -100,-99,\cdots, 99, 100$, and computing the eigenvalues of $(i\xi A + S_U)$ at all grid points. The number of grid points with eigenvalues with positive real part and the $L^{\infty}$-norm of $m_0$ during the time evolution is shown in Figure \ref{fig:linear-instability}. It is observed that for $N=3$ and 5, the solution blows up when the grid points with linear instability appear. This indicates that the loss of linear stability probably results in the blow up of the numerical solutions. It is also interesting to see that for $N=9$, the model generates stable solution; however, there also exists several grid points with linear instability when the time is around 0.7 and the model returns to stability in the time afterwards. How to stabilize the closure system, while maintaining training accuracy, is a topic to be investigated in the future.
	\begin{figure}
	    \centering
	    \subfigure[number of grid points with linear instability in the time evolution]{
	    \begin{minipage}[b]{0.46\textwidth}
	    \includegraphics[width=1\textwidth]{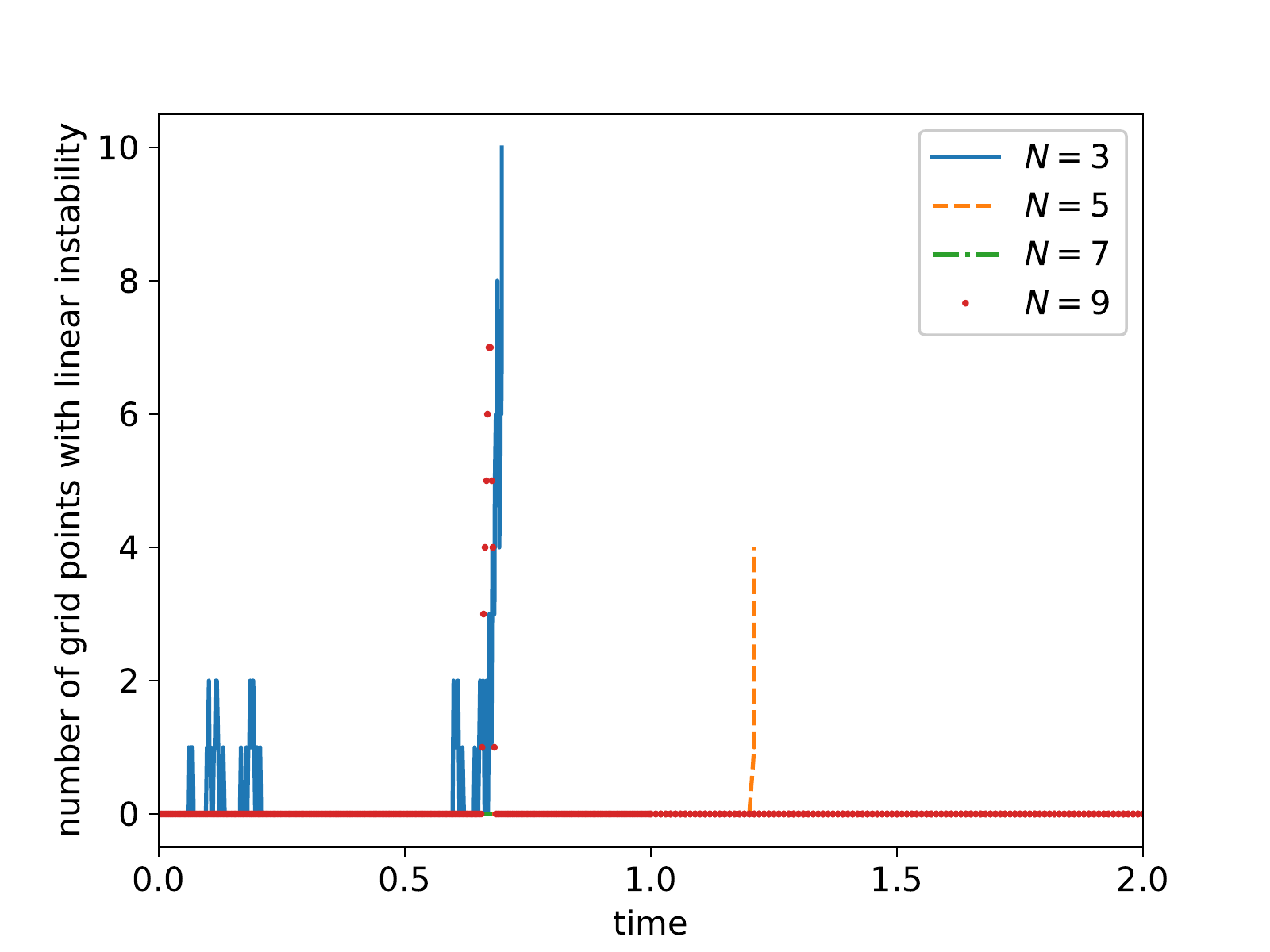}
	    \end{minipage}
	    }
	    \subfigure[$L^{\infty}$-norm of $m_0$ in the time evolution]{
	    \begin{minipage}[b]{0.46\textwidth}    
	    \includegraphics[width=1\textwidth]{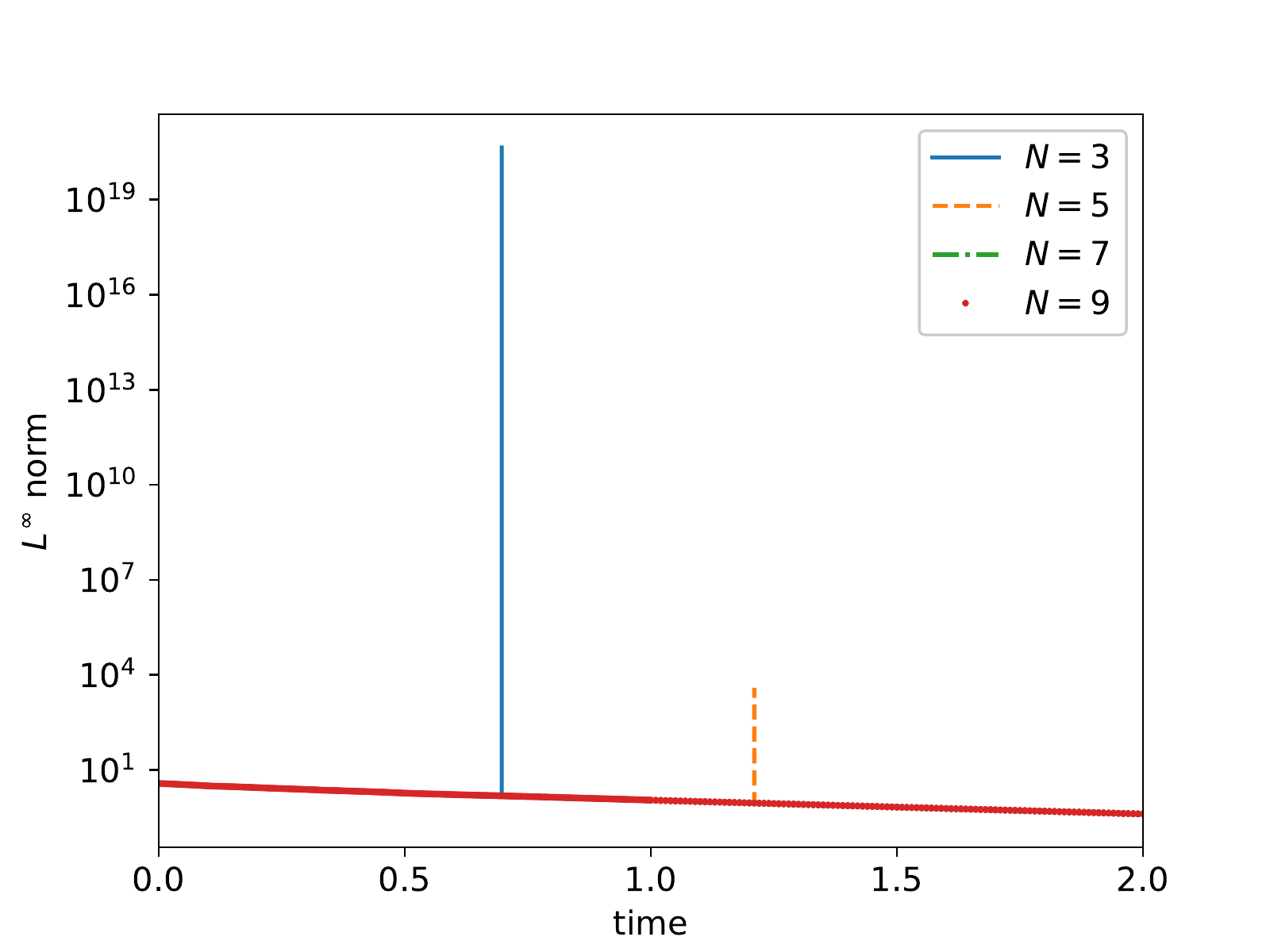}
	    \end{minipage}
	    }	    
		\caption{Example \ref{ex:const}: constant scattering and absorption coefficients.}
	    \label{fig:linear-instability}
	\end{figure}

\end{exam}

\begin{exam}[Gaussian source problem]\label{ex:gauss-source}
	In this example, we investigate the RTE with the initial condition to be a Gaussian distribution in the physical domain:
	\begin{equation}\label{eq:gauss-source-init}
		f_0(x,v) = \frac{c_1}{(2 \pi \theta)^{1/2}} \exp\brac{-\frac{(x - x_0)^2}{2 \theta}} + c_2.
	\end{equation}
	In this test, we take $c_1=0.5$, $c_2=2.5$, $x_0=0.5$ and $\theta=0.01$. 
	We note that this problem is named the Gaussian source problem in the literature \cite{frank2012perturbed,fan2020nonlinear}.

	In Figure \ref{fig:gauss-source-compare}, we present the results obtained using various closure models. Here, we take $\sigma_s=1$ and $\sigma_a=0$. We observe good agreement between the three ML closure models and the kinetic model, while the $P_N$ model has large deviations from the kinetic model. These results show the good generalizability of our ML closure models. Moreover, the three hyperbolic ML models have the same level of accuracy in this test.
	\begin{figure}
	    \centering
	    \subfigure[$m_0$ at $t = 1$]{
	    \begin{minipage}[b]{0.46\textwidth}
	    \includegraphics[width=1\textwidth]{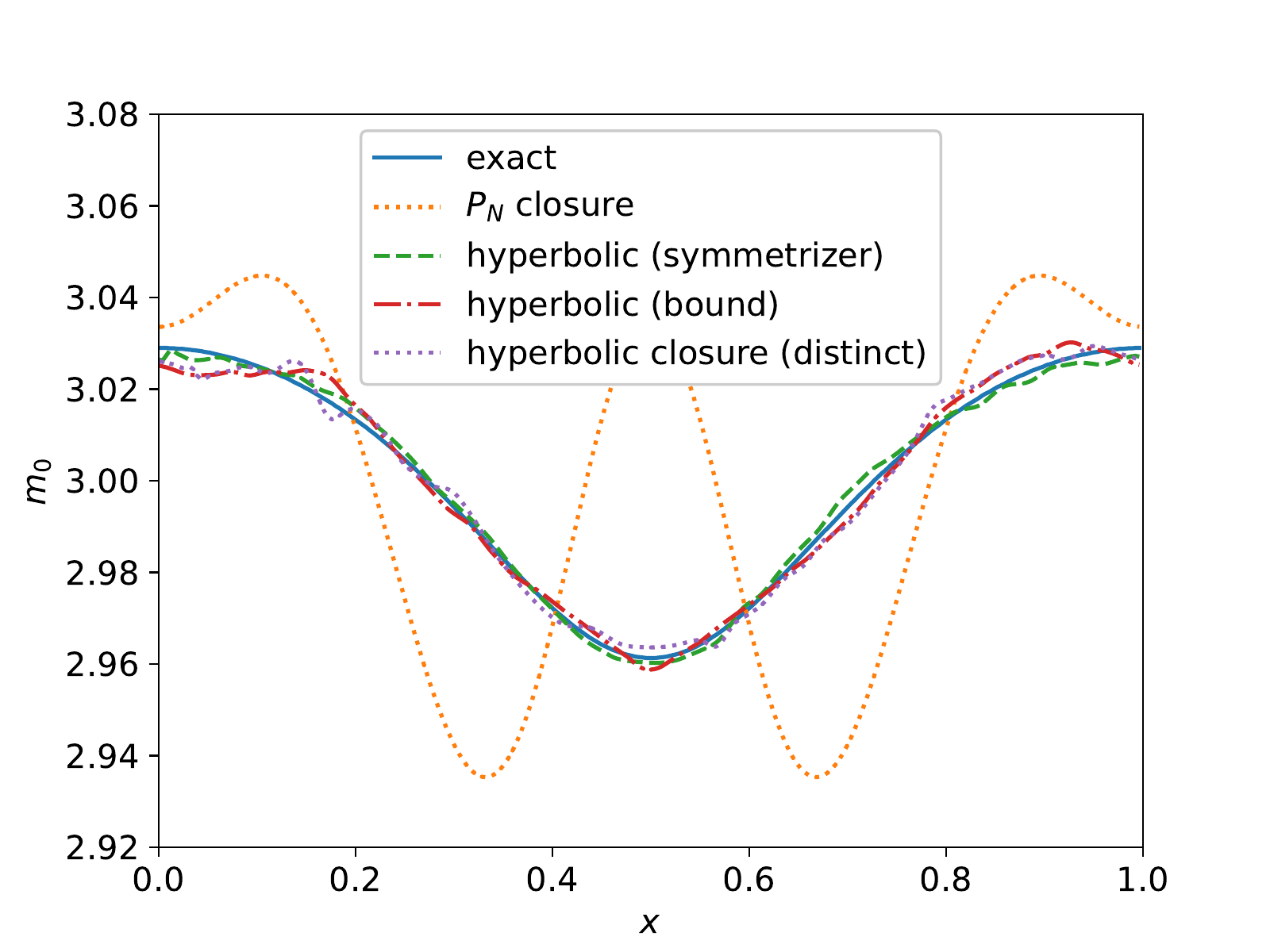}
	    \end{minipage}
	    }
	    \subfigure[$m_1$ at $t = 1$]{
	    \begin{minipage}[b]{0.46\textwidth}    
	    \includegraphics[width=1\textwidth]{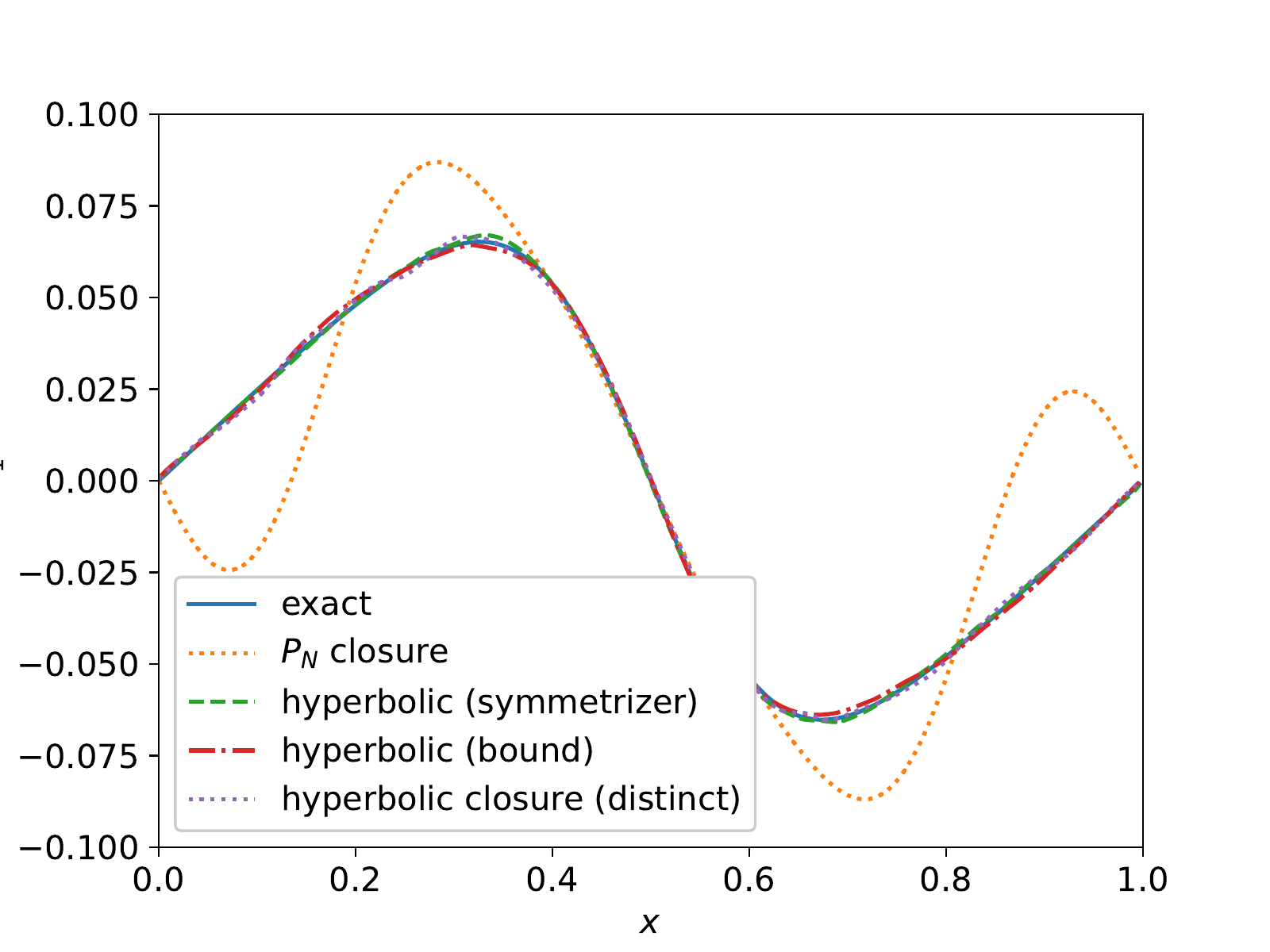}
	    \end{minipage}
	    }	    
		\caption{Example \ref{ex:gauss-source}: Gaussian source problem, $N=6$ and $t=1$.}
	    \label{fig:gauss-source-compare}
	\end{figure}
\end{exam}

\begin{exam}[two-material problem]\label{ex:two-material}
	The two-material problem models a domain with a discontinuous material cross section \cite{larsen1989asymptotic}. In our problem setup, there exist two discontinuities $0<x_1<x_2<1$ in the domain, and $\sigma_s$ and $\sigma_a$ are piecewise constant functions:
	$$ 
	\sigma_s(x)=\left\{
	\begin{aligned}
	& \sigma_{s1}, \quad  ~ x_1 < x < x_2, \\
	& \sigma_{s2}, \quad  ~ 0\le x < x_1 ~ \textrm{or} ~ x_2\le x < 1.
	\end{aligned}
	\right.
	$$
	and
	$$ 
	\sigma_a(x)=\left\{
	\begin{aligned}
	& \sigma_{a1}, \quad  ~ x_1 < x < x_2, \\
	& \sigma_{a2}, \quad  ~ 0\le x < x_1 ~ \textrm{or} ~ x_2\le x < 1.
	\end{aligned}
	\right.
	$$

	Specifically, we take $x_1=0.3$, $x_2=0.7$, $\sigma_{s1}=1$, $\sigma_{s2}=10$ and $\sigma_{a1}=\sigma_{a2}=0$. The numerical results are shown in Figure \ref{fig:two-material-N6}. The gray part is in the optically thin regime and the other part is in the intermediate regime. We observe that our current closure model agrees well with the kinetic solution over the whole domain at both $t=0.5$ and $t=1$. We note that this is in contrast to the $P_N$ closure, which has large deviations from the kinetic solution in the optically thin portion of the domain, see Figure \ref{fig:two-material-N6}. Moreover, the two eigenvalue based hyperbolic closures perform better than the closure in \cite{huang2021hyperbolic} which has some overshoot near the discontinuities, see Figure \ref{fig:two-material-N6} (d).
	\begin{figure}
	    \centering
	    \subfigure[$m_0$ at $t=0.5$]{
	    \begin{minipage}[b]{0.46\textwidth}
	    \includegraphics[width=1\textwidth]{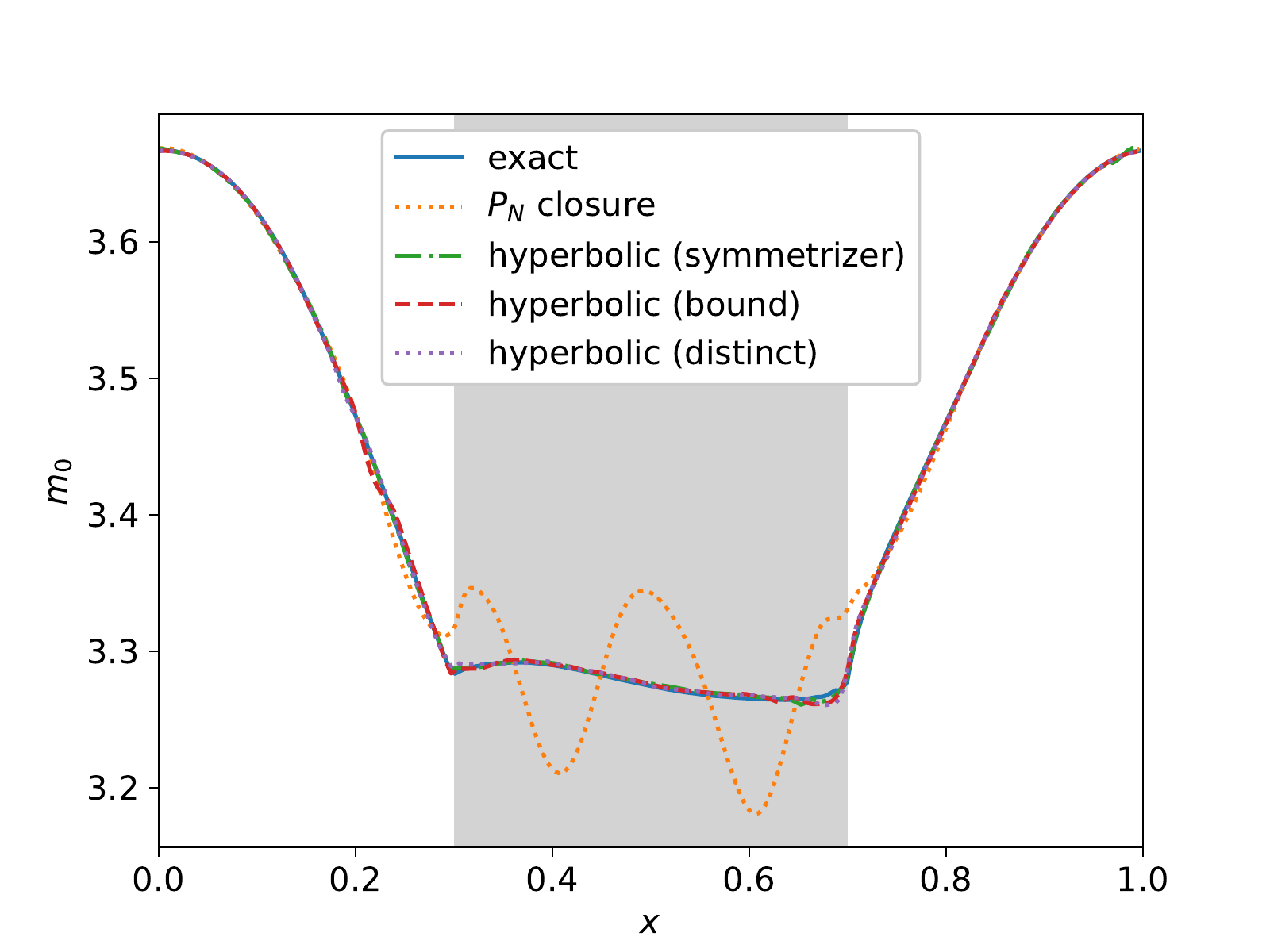}
	    \end{minipage}
	    }
	    \subfigure[$m_1$ at $t=0.5$]{
	    \begin{minipage}[b]{0.46\textwidth}    
	    \includegraphics[width=1\textwidth]{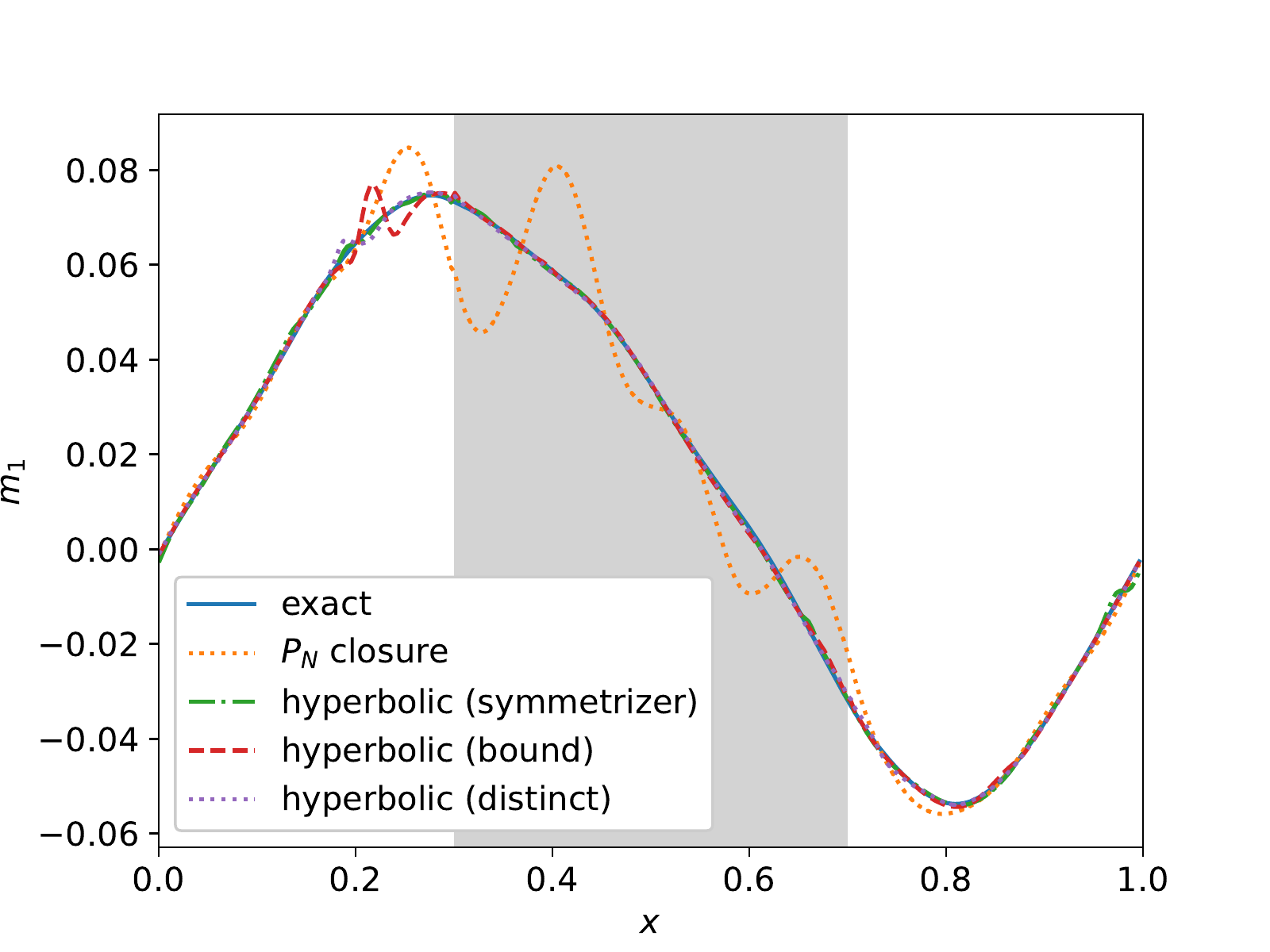}
	    \end{minipage}
	    }
	    \bigskip
	    \subfigure[$m_0$ at $t=1$]{
	    \begin{minipage}[b]{0.46\textwidth}
	    \includegraphics[width=1\textwidth]{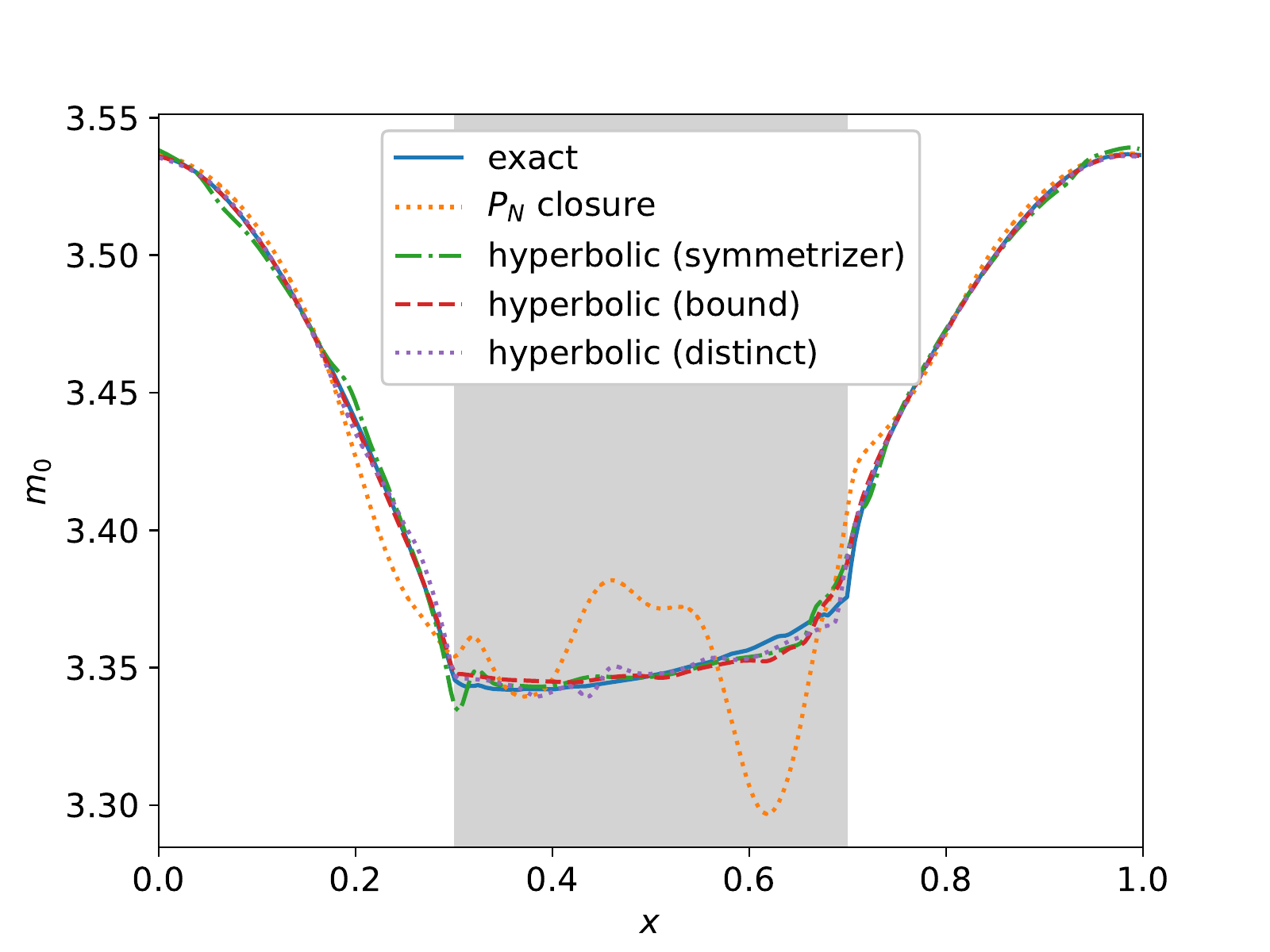}
	    \end{minipage}
	    }
	    \subfigure[$m_1$ at $t=1$]{
	    \begin{minipage}[b]{0.46\textwidth}    
	    \includegraphics[width=1\textwidth]{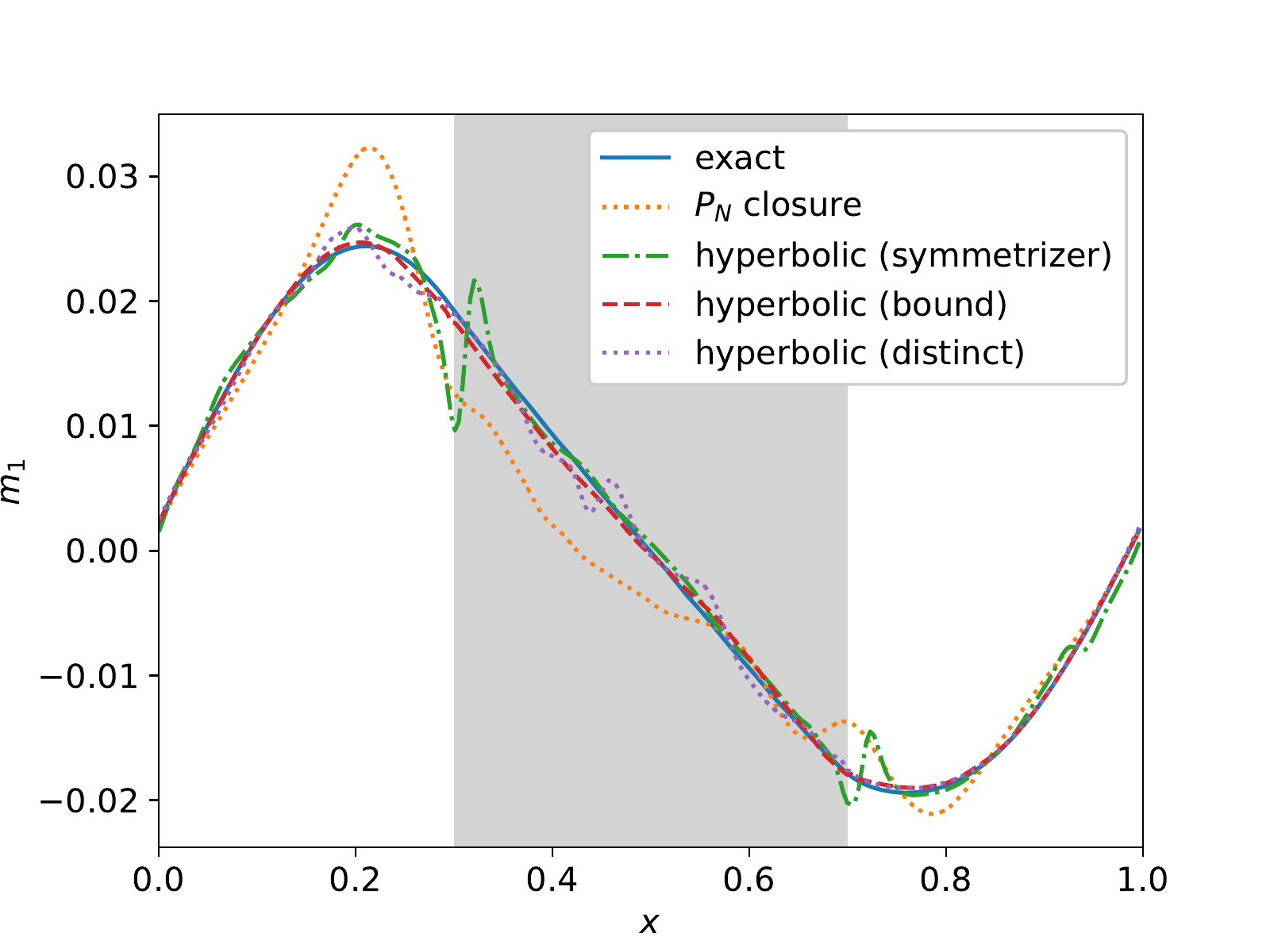}
	    \end{minipage}
	    }
		\caption{Example \ref{ex:two-material}: two-material problem. Numerical solutions of $m_0$ and $m_1$ at $t=0.5$ and $t=1$ with $N=6$. The gray part in the middle is in the optically thin regime and the other part is in the intermediate regime.}
	    \label{fig:two-material-N6}
	\end{figure}

	In Figure \ref{fig:two-material-convergence}, we numerically investigate the convergence of the ML closure model with bounded eigenvalues to the kinetic model as the number of moments increases. We take the number of moment to be $N=6,8,10,12,14,16$. In Table \ref{tab:table-two-material}, we present the relative $L^2$ errors of $m_0$ and $m_1$ for the same numerical example. We observe that the error between the solution to the ML closure model and the solution to the kinetic equation becomes smaller with an increasing number of moments. This numerically indicates that the ML closure model converges to the kinetic model as the number of moments increases.  It is worth noting that the saturation in convergence seen in table 5.1 is of the same order as the training error in the ML Closure model.
	\begin{figure}
	    \centering
	    \subfigure[$m_0$ at $t=2$]{
	    \begin{minipage}[b]{0.46\textwidth}
	    \includegraphics[width=1\textwidth]{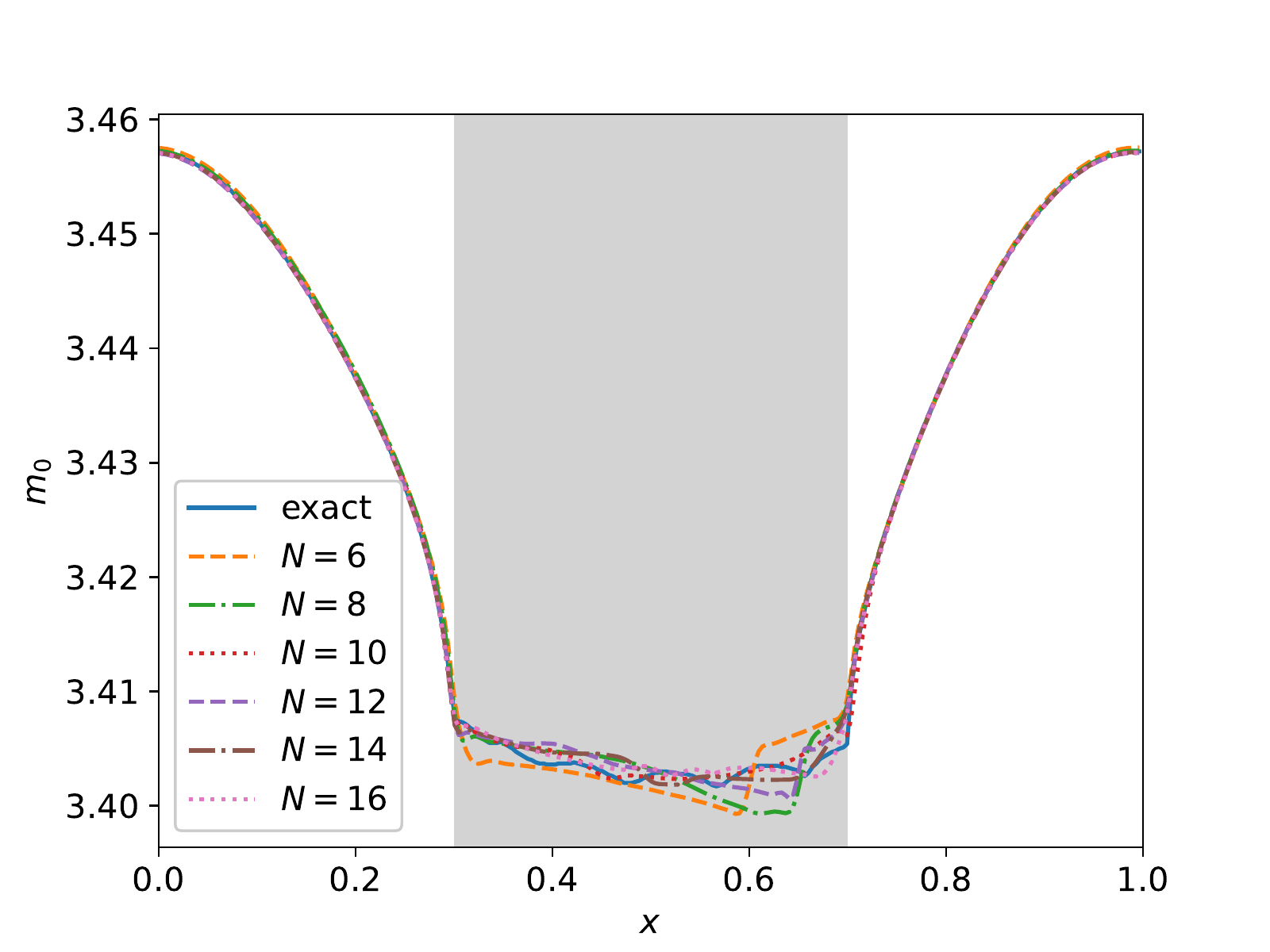}
	    \end{minipage}
	    }
	    \subfigure[$m_1$ at $t=2$]{
	    \begin{minipage}[b]{0.46\textwidth}    
	    \includegraphics[width=1\textwidth]{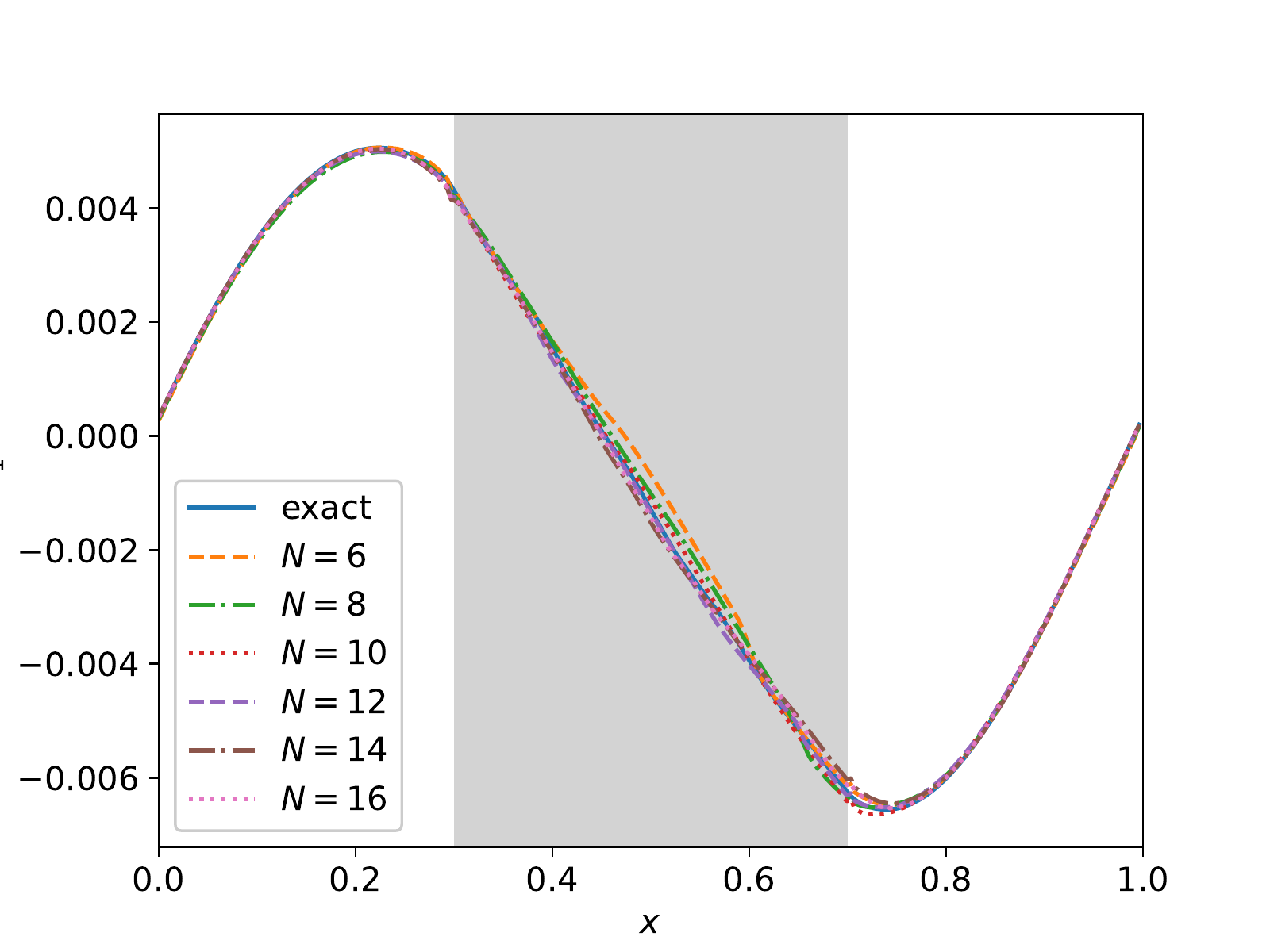}
	    \end{minipage}
	    }
		\caption{Example \ref{ex:two-material}: two-material problem, convergence with respect to number of moments, the ML closure model with bounded eigenvalues. Numerical solutions of $m_0$ at $t=2$ with $N=6,8,10,12,14,16$. The gray part in the middle is in the optically thin regime and the other part is in the intermediate regime. }
	    \label{fig:two-material-convergence}
	\end{figure}

    \begin{table}[!hbp]
    \centering
    \caption{Example \ref{ex:two-material}: two-material problem, convergence with respect to number of moments, the ML closure model with bounded eigenvalues. The relative $L^2$ errors of the numerical solutions of $m_0$ and $m_1$ at $t=2$ with $N=6,8,10,12,14,16$.}
    \label{tab:table-two-material}
    \begin{tabular}{c|c|c}
      \hline
    $N$  & relative $L^2$ error of $m_0$ & relative $L^2$ error of $m_1$ \\
      \hline
    6   & 5.79e-4   & 7.84e-2 \\ 
    8   & 3.78e-4   & 5.69e-2 \\
    10  & 3.67e-4   & 2.93e-2 \\
    12  & 3.66e-4   & 3.64e-2   \\
    14  & 1.98e-4   & 2.94e-2   \\
    16  & 1.66e-4   & 2.21e-2   \\
    \hline
    \end{tabular}
    \end{table}

\end{exam}

\section{Conclusion}\label{sec:conclusion}

In this paper, we propose a new method to enforce the hyperbolicity of a ML closure model. Motivated by the observation that the coefficient matrix of the closure system is a lower Hessenberg matrix, we relate its eigenvalues to the roots of an associated polynomial. We design two new neural network architectures based on this relation.
The ML closure model resulting from the first neural network is weakly hyperbolic and guarantees the physical characteristic speeds, i.e. the eigenvalues lie in the range of the interval $[-1, 1]$.
The second model is strictly hyperbolic, but does not guarantee the boundedness of the eigenvalues, although in practice the eigenvalues lie nearly within the physical range.
Having the physical characteristic speeds saves substantial computational expenses when numerically solving the closure system by allowing for a larger time step size compared to \cite{huang2021hyperbolic}. 
Several benchmark tests including the Gaussian source problem and the two-material problem show the good accuracy and generalizability of our hyperbolic ML closure model. Nevertheless, there exists some numerical instability for the current model when a small number of moments are used. We will try to fix this problem in the future work.

\section*{Acknowledgment}

We thank Michael M. Crockatt in Sandia National Laboratories for providing numerical solver for the radiative transfer equation. We acknowledge the High Performance Computing Center (HPCC) at Michigan State University for providing computational resources that have contributed to the research results reported within this paper. JH would like to thank Professor Wen-An Yong in Tsinghua University for many fruitful discussions. This work has been assigned a document release number LA-UR-21-28626.

\begin{appendices}

\section{Collections of proofs}\label{appendix:proof}

In this appendix, we collect some lemma and proofs. We start with a lemma which characterize the eigenspace of unreduced lower Hessenberg matrix.
\begin{lem}\label{lem:geometric-multiplicity}
	For an unreduced lower Hessenberg matrix $H=(h_{ij})_{n\times n}$, the geometric multiplicity of any eigenvalue $\lambda$ is 1 and the corresponding eigenvector is $(q_0(\lambda), q_1(\lambda), \cdots, q_{n-1}(\lambda))^T$. Here $\{q_i\}_{0\le i\le n-1}$ is the associated polynomial sequence defined in \eqref{eq:recurrence-polynomial}.
\end{lem}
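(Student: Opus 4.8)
The plan is to read the eigenvector directly off the componentwise equations of $(H-\lambda I)\vect{v}=0$, using the unreduced hypothesis $h_{i,i+1}\neq 0$, which is exactly what makes a forward recursion for the components well-defined. I would fix any eigenvalue $\lambda$ and take a nonzero $\vect{v}=(v_1,\dots,v_n)^T\in\ker(H-\lambda I)$, indexing rows and columns from $1$ to $n$. Since $H$ is lower Hessenberg, the $i$-th equation involves only $v_1,\dots,v_{i+1}$; for each $i=1,\dots,n-1$ I would isolate the single superdiagonal term $h_{i,i+1}v_{i+1}$ and divide by $h_{i,i+1}\neq 0$ to obtain
\[
v_{i+1} = \frac{1}{h_{i,i+1}}\Bigl(\lambda v_i - \sum_{j=1}^{i} h_{ij} v_j\Bigr).
\]

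Next I would compare this recursion with the defining recurrence \eqref{eq:recurrence-polynomial} for $\{q_i\}$. They have identical shape, so an induction on $i$ yields $v_{i+1}=v_1\,q_i(\lambda)$, i.e. $v_j=v_1\,q_{j-1}(\lambda)$ for all $j$. Because $q_0\equiv 1$, a nonzero $\vect{v}$ forces $v_1\neq 0$; normalizing $v_1=1$ then identifies $\vect{v}$ with $\vect{q}_{n-1}(\lambda)=(q_0(\lambda),\dots,q_{n-1}(\lambda))^T$, the claimed eigenvector.

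The geometric-multiplicity statement falls out of the same computation: the first $n-1$ equations determine $v_2,\dots,v_n$ uniquely once $v_1$ is fixed, so every kernel vector is a scalar multiple of $\vect{q}_{n-1}(\lambda)$ and hence $\dim\ker(H-\lambda I)=1$. Equivalently, I could note that deleting the last row and the first column of $H-\lambda I$ leaves a lower-triangular matrix whose diagonal consists of the nonzero superdiagonal entries $h_{i,i+1}$; this is a nonsingular $(n-1)\times(n-1)$ minor, so $\operatorname{rank}(H-\lambda I)\ge n-1$, and since $\lambda$ is an eigenvalue the rank is exactly $n-1$. Finally, substituting $v_j=q_{j-1}(\lambda)$ into the remaining $n$-th equation and using the recurrence with the convention $h_{n,n+1}:=1$ collapses it to $q_n(\lambda)=0$, confirming consistency with Theorem \ref{thm:root-is-eigenvalue}.

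I expect no serious obstacle here: the only point requiring care is the index bookkeeping between the $1$-indexed matrix equations and the recurrence for $\{q_i\}$, together with the observation that the unreduced hypothesis $h_{i,i+1}\neq 0$ is precisely what licenses solving for $v_{i+1}$ at each step. The argument would break down for a general (reduced) Hessenberg matrix, where a vanishing superdiagonal entry could decouple the recursion and permit a higher-dimensional eigenspace.
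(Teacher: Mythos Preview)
Your proposal is correct and follows essentially the same argument as the paper: write the eigenvector equation componentwise, use the unreduced hypothesis $h_{i,i+1}\neq 0$ to solve the first $n-1$ rows as a forward recursion determining $v_2,\dots,v_n$ from $v_1$, observe this forces $v_1\neq 0$ and matches the recurrence \eqref{eq:recurrence-polynomial} for $\{q_i\}$, and conclude both the geometric multiplicity and the form of the eigenvector. Your additional rank--minor observation and the remark that the $n$-th row reduces to $q_n(\lambda)=0$ are correct extras not present in the paper's proof.
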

\begin{proof}	
	By Definition \ref{defn:hessenberg}, we have that $h_{ij}=0$ for $j>i+1$ and $h_{i,i+1}\ne0$ for $i=1,\cdots,n-1$.
	Let $r=(r_1,r_2,\cdots,r_n)$ be an eigenvector associated with $\lambda$. We write $Ar = \lambda r$ as an equivalent component-wise formulation:
	\begin{equation}\label{eq:geo-eq-1}
		\sum_{j=1}^i h_{ij} r_j + h_{i,i+1} r_{i+1} = \lambda r_i, \quad i = 1,\cdots,n-1,
	\end{equation}
	and
	\begin{equation}\label{eq:geo-eq-2}
		\sum_{j=1}^n h_{nj} r_j = \lambda r_n.
	\end{equation}
	Here we used the fact that $h_{ij}=0$ for $j>i+1$.
	Since $h_{i,i+1}\ne0$ for $i=1,\cdots,n-1$, \eqref{eq:geo-eq-1} is equivalent to
	\begin{equation}\label{eq:geo-eq-1-equiv}
		r_{i+1} = \frac{1}{h_{i,i+1}} \brac{\lambda r_i - \sum_{j=1}^i h_{ij} r_j}, \quad i = 1,\cdots,n-1
	\end{equation}
	From \eqref{eq:geo-eq-1-equiv}, we deduce that $r_1\ne0$, otherwise $r_2=\cdots=r_n=0$. Moreover, $r_i$ for $i = 2,\cdots,n$ are uniquely determined by $r_1$. Therefore, the geometric multiplicity of $\lambda$ is 1. Moreover, without loss of generality, we take $r_1=1$. In this case, $r$ is exactly the same with $(q_0(\lambda),q_1(\lambda),\cdots,q_{n-1}(\lambda))^T$. Here $\{q_i\}_{0\le i\le n-1}$ is the associated polynomial sequence defined in \eqref{eq:recurrence-polynomial}.
\end{proof}

\begin{lem}\label{lem:eigenvalue-is-root}
	Let $H = (h_{ij})_{n\times n}$ be an unreduced lower Hessenberg matrix and $\{q_i\}_{0\le i\le n}$ is the associated polynomial sequence with $H$. If $\lambda$ is an eigenvalue of $H$, then $\lambda$ is a root of $q_n$.
\end{lem}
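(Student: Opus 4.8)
The plan is to combine the matrix--vector form of the recurrence \eqref{eq:recurrence-polynomial-matx-vect} with the eigenspace description already obtained in Lemma \ref{lem:geometric-multiplicity}. The single identity
\begin{equation*}
(x I_n - H)\,\vect{q}_{n-1}(x) = q_n(x)\,\vect{e}_n,
\end{equation*}
which is merely a rearrangement of \eqref{eq:recurrence-polynomial-matx-vect} and holds for every scalar $x$, already contains everything I need: it says that applying $(xI_n-H)$ to the polynomial vector $\vect{q}_{n-1}(x)$ produces a vector supported only in its last coordinate, and that coordinate is exactly $q_n(x)$.

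The steps I would carry out are as follows. First, let $\lambda$ be an eigenvalue of $H$. By Lemma \ref{lem:geometric-multiplicity}, a corresponding eigenvector is $\vect{q}_{n-1}(\lambda)=(q_0(\lambda),\dots,q_{n-1}(\lambda))^T$, so that $H\vect{q}_{n-1}(\lambda)=\lambda\vect{q}_{n-1}(\lambda)$, i.e.\ $(\lambda I_n-H)\vect{q}_{n-1}(\lambda)=\vect{0}$. Second, I evaluate the identity above at $x=\lambda$, which gives $(\lambda I_n-H)\vect{q}_{n-1}(\lambda)=q_n(\lambda)\,\vect{e}_n$. Comparing the two expressions for the same vector $(\lambda I_n-H)\vect{q}_{n-1}(\lambda)$ forces $q_n(\lambda)\,\vect{e}_n=\vect{0}$, and since $\vect{e}_n\ne\vect{0}$ we conclude $q_n(\lambda)=0$, i.e.\ $\lambda$ is a root of $q_n$.

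There is no substantial obstacle here; the only point requiring care is the justification that $\vect{q}_{n-1}(\lambda)$ really is an eigenvector for every eigenvalue $\lambda$, rather than just a formal candidate. This is precisely where the unreduced hypothesis $h_{i,i+1}\ne0$ enters, via Lemma \ref{lem:geometric-multiplicity}: the nonvanishing subdiagonal lets one solve the first $n-1$ rows of the eigenvalue equation recursively, pinning the eigenvector down (up to scaling) to $\vect{q}_{n-1}(\lambda)$. Alternatively, if one prefers not to quote Lemma \ref{lem:geometric-multiplicity}, the same conclusion follows by writing the eigenvector equation $Hr=\lambda r$ componentwise: rows $1,\dots,n-1$ reproduce the recurrence \eqref{eq:recurrence-polynomial} and force $r=r_1\vect{q}_{n-1}(\lambda)$ with $r_1\ne0$, while the final row reduces to $r_1\,q_n(\lambda)=0$, again yielding $q_n(\lambda)=0$.
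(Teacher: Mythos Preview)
Your proposal is correct and follows essentially the same approach as the paper: invoke Lemma~\ref{lem:geometric-multiplicity} to identify $\vect{q}_{n-1}(\lambda)$ as the eigenvector for $\lambda$, then substitute $x=\lambda$ into the matrix--vector recurrence \eqref{eq:recurrence-polynomial-matx-vect} to read off $q_n(\lambda)=0$. Your additional remark about bypassing Lemma~\ref{lem:geometric-multiplicity} by arguing directly from the componentwise eigenvector equation is a valid alternative but not needed here.
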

\begin{proof}
	From Lemma \ref{lem:geometric-multiplicity}, we have the geometric multiplicity of $\lambda$ is 1 and the corresponding eigenvector $\vect{q}_{n-1}(\lambda) = (q_0(\lambda),q_1(\lambda),\cdots,q_{n-1}(\lambda))^T$, i.e. $H \vect{q}_{n-1}(\lambda) = \lambda \vect{q}_{n-1}(\lambda)$. Plugging $\lambda$ into \eqref{eq:recurrence-polynomial-matx-vect}, we immediately have $q_n(\lambda)=0$, i.e., $\lambda$ is a root of $q_n$.
\end{proof}

\subsection{Proof of Theorem \ref{thm:real-diagonalizable}}
\begin{proof}
	We start by proving that condition 1 and condition 2 are equivalent. First, it is easy to see that condition 2 implies condition 1. We only need to prove that condition 1 implies condition 2. Since $A$ is real diagonalizable, all the eigenvalues of $A$ are real. Moreover, for any eigenvalue of $A$, the geometric multiplicity is equal to its algebraic multiplicity. By Lemma \ref{lem:geometric-multiplicity}, the geometric multiplicity of any eigenvalue of an unreduced lower Hessenberg matrix is 1. Therefore, any eigenvalue of $A$ has algebraic multiplicity of 1, i.e. all the eigenvalues of $A$ are distinct.

	Next, we prove that the equivalence of condition 2 and condition 3. It is easy to see that, condition 3 implies condition 2 from Theorem \ref{thm:root-is-eigenvalue}, and condition 2 implies condition 3 from Lemma \ref{lem:eigenvalue-is-root}. This completes the proof.
\end{proof}

\subsection{Proof of Lemma \ref{lem:monomial-to-legendre}}

\begin{proof}

We start from the definition of Legendre polynomials by the generating function:
\begin{equation}\label{eq:generating-function}
	\frac{1}{\sqrt{1-2tx+t^2}} = \sum_{n=0}^\infty P_n(x) t^n.
\end{equation}
Introduce the variable $s$ such that
\begin{equation}\label{eq:x-to-s}
	1 - ts = \sqrt{1-2tx + t^2},
\end{equation}
which is equivalent to
\begin{equation}\label{eq:s-to-x}
	x = \frac{1+t^2 - (1-ts)^2}{2t} = s + \frac{t}{2}(1-s^2).
\end{equation}
Therefore, we have
\begin{equation}\label{eq:seriez}
	\sum_{n=0}^\infty t^n \int_{-1}^1 x^m P_n(x) dx
	\stackrel{\text{\eqref{eq:generating-function}}}{=} \int_{-1}^1 \frac{x^m dx}{\sqrt{1-2tx+t^2}}
	\stackrel{\text{\eqref{eq:s-to-x}}}{=} \int_{-1}^1 \frac{x^m (1-ts) ds}{\sqrt{1-2tx+t^2}}
	\stackrel{\text{\eqref{eq:x-to-s}-\eqref{eq:s-to-x}}}{=} \int_{-1}^1 \left(s + \frac{t}{2}(1-s^2)\right)^m ds.
\end{equation}

Define 
\begin{equation}
	a_{m,n} := \int_{-1}^1 x^m P_n(x) dx.
\end{equation}
By comparing the coefficients of $t^n$ on both sides of \eqref{eq:seriez}, we find that $a_{m,n} = 0$ if $n>m$ or $m$, $n$ has different parity. For $n = m - 2k$ for some integer $k\ge0$, we have
\begin{equation}
	a_{m,m-2k} = 2^{2k-m}\binom{m}{2k}\int_{-1}^1 s^{2k} (1-s^2)^{m-2k} ds = 2^{2k-m}\binom{m}{2k}\int_{0}^1 2 s^{2k} (1-s^2)^{m-2k} ds
\end{equation}
By introducing the variable $\tau = s^2$ or equivalently $s = \tau^{\frac{1}{2}}$, we have
\begin{equation}
\begin{aligned}
	a_{m,m-2k} &= 2^{2k-m}\binom{m}{2k}\int_{0}^1 2 s^{2k} (1-s^2)^{m-2k} ds \\
	&= 2^{2k-m}\binom{m}{2k}\int_{0}^1 2 \tau^{k} (1-\tau)^{m-2k} \frac{1}{2}\tau^{-\frac{1}{2}} d\tau \\
	&= 2^{2k-m}\binom{m}{2k}\int_{0}^1 \tau^{k-\frac{1}{2}} (1-\tau)^{m-2k} d\tau \\
	&= 2^{2k-m}\binom{m}{2k}\frac{\Gamma(k+\frac12)\Gamma(m-2k+1)}{\Gamma(m-k+\frac32)} \\
	&= \frac{m!}{2^{k-1}k!(2m-2k+1)!!}
\end{aligned}	
\end{equation}
where in the fourth equality we used the relation between the gamma function and the beta function:
\begin{equation}
	B(x,y) := \int_0^1 t^{x-1} (1-t)^{y-1} dt = \frac{\Gamma(x)\Gamma(y)}{\Gamma(x+y)},
\end{equation}
and in the last equality we used the properties of the gamma function: for any integer $n\ge0$
\begin{equation}
	\Gamma(n) = (n-1)!, \quad \Gamma(n+\frac{1}{2}) = \frac{(2n-1)!!}{2^n}\sqrt{\pi}.
\end{equation}

Lastly, using the orthogonality relation $\int_{-1}^1 P_m(x)P_n(x) = \frac{2}{2m+1}\delta_{m,n}$, we have for any integer $m\ge0$,
\begin{equation}
	x^m = \sum_{k=0}^{\lfloor m/2\rfloor} \left(\frac{2m-4k+1}{2}\right) a_{m,m-2k} P_{m-2k}(x)
	= \sum_{k=0}^{\lfloor m/2\rfloor}\frac{m!(2m-4k+1)}{2^k k!(2m-2k+1)!!}P_{m-2k}(x)	
\end{equation}
This completes the proof.
\end{proof}

\end{appendices}

\newpage
\bibliographystyle{abbrv}
\bibliography{ref}

\end{document}